\documentclass[amssymb,amscd,11pt,20pt]{amsart}
\usepackage{amsmath,amsthm}
\usepackage{amsfonts}
\usepackage{amscd}
\usepackage[english]{babel}
\usepackage{amscd}
\usepackage[all]{xy}

\usepackage[all]{xy}
\usepackage{amsmath,latexsym,amssymb,verbatim}
\input arrow.tex

\newcommand{\codim}{{\operatorname{codim}}}

\newcommand{\LL}{{\mathbb L}}
\newcommand{\FF}{{\mathtt F}}

\newcommand{\Lc}{{\mathcal L}}

\newcommand{\Sa}{{\mathbb S}}
\newcommand{\ZZ}{{\mathbb Z}}

\newcommand{\PP}{\mathbb P}
\newcommand{\AAA}{\mathbb A}
\newcommand{\RR}{\mathbb R}
\newcommand{\0}{\mathbf 0}
\newcommand{\un}{\mathbf 1}

\newcommand{\wh}{\widehat}

\newcommand{\enumera}{\begin{enumerate}}
\newcommand{\eenumera}{\end{enumerate}}
\newcommand{\C}{{\mathcal C}}

\newcommand{\A}{{\mathbb A}}

\DeclareMathOperator{\Hom}{{Hom}}

\newcommand{\alineas}[1]{\begin{array}{#1}}
\newcommand{\alinea}{\begin{array}{l}}
\newcommand{\ealinea}{\end{array}}
\newcommand{\ealineas}{\end{array}}

\newcommand{\pun}{{\scriptscriptstyle \bullet}}

\newcommand{\HHom}{\mathcal{H}om}

\newcommand{\supp}{\operatorname{supp}}

\theoremstyle{plain}
\newtheorem{thm}{Theorem}[section]
\newtheorem{lem}[thm]{Lemma}
\newtheorem{cor}[thm]{Corollary}
\newtheorem{prop}[thm]{Proposition}
\newtheorem{defn}[thm]{Definition}
\newtheorem{rem}[thm]{Remark}

\newtheorem{ejems}[thm]{Examples}

\newtheorem*{ex}{Example}
\newtheorem*{exs}{Examples}

\numberwithin{equation}{thm}

\begin{document}

\title{Dualizing and canonical complexes on finite posets II: properness}
%\author{Amelia \'{A}lvarez, Fernando Sancho, and Pedro Sancho}
%\date{10-1-2005}

%    Information for second author
\author{Fernando Sancho de Salas}

\address{ Departamento de
Matem\'aticas and Instituto Universitario de F\'isica Fundamental y Matem\'aticas (IUFFyM)\newline
Universidad de Salamanca\newline  Plaza de la Merced 1-4\\
37008 Salamanca\newline  Spain}
\email{fsancho@usal.es}

\subjclass[2020] {Primary  18F20, 06A07. Secondary  06A11, 14F08 ,   14A23}

\keywords{finite posets, simplicial complexes, properness, dualizing complexes, canonical complexes,   field with one element}

\thanks {Work supported by Grant PID2021-128665NB-I00 funded by MCIN/AEI/ 10.13039/501100011033 and, as appropriate, by ``ERDF A way of making Europe''.}
 
\begin{abstract}   We develop a theory of proper spaces and maps in the context of finite posets.
\end{abstract}

 \maketitle

\section*{Introduction} 

Grothendieck's theory of dualizing and canonical complexes, along with their relationship, was developed in \cite{ST2} within the context of finite posets. This framework was utilized there to establish a subsequent theory of Cohen--Macaulayness that mimics Grothendieck's ideas. In the present paper, we employ the theory of canonical and dualizing complexes to develop a theory of proper maps and spaces on finite posets.

In classical geometric contexts (that is, that of locally compact Hausdorff spaces or schemes) proper spaces and proper maps play a relevant role and projective spaces or maps are a fundamental example of them. For instance, properness plays an important role in Verdier-Grothendieck duality. In this paper we investigate what properness should be in the context of finite posets (i.e., finite and $T_0$ topological spaces, which are called finite spaces here after). In this context, projective and affine spaces exist, denoted by $\PP^n_{\FF_1}$ and $\A^n_{\FF_1}$ in this paper  because they are the underlying topological spaces of Deitmar's projective and affine spaces over the field with one element (\cite{Thas}, \cite{Deitmar}).  They are the most basic examples of finite posets: $\PP^n_{\FF_1}$ is the poset of all (non-empty) subsets of a set with $n+1$ elements, and $\A^n_{\FF_1}$ is the poset of all  subsets of a set with $n$ elements. They serve as the ambient spaces of (finite) abstract simplicial complexes (the choice between $\A^n_{\FF_1}$ and $\PP^n_{\FF_1}$   depends on whether  or not  one admits the empty subset as a simplex). One would desire $\PP^n_{\FF_1}$ to be proper but not $\A^n_{\FF_1}$ (for $n>0$), and a closed subset of $\PP^n_{\FF_1}$ to be proper but not a closed subset of $\A^n_{\FF_1}$ (unless it is a point). After studying dualizing and canonical complexes on finite posets, we noticed that if $X$ is  a closed subset of  $\PP^n_{\FF_1}$, then its global dualizing complex $D_X$ is also a canonical complex, which fails to occur if $X$ is a closed subset of $\A^n_{\FF_1}$ (unless $X$ is a point). This parallels the classic geometric context: if $X$ is proper, the dualizing complex is a canonical complex. Thus, the starting idea is simple: take this property as the definition of a proper space, at least for a dualizable space $X$ (a space admitting a canonical complex).  For the general—non-dualizable—case, we need to introduce some concepts and notations. 

Let $X$ be a finite space. We say that $X$ is {\emph {local}} if it has a unique closed point, denoted by $\0$. For each $p\in X$, $U_p$ denotes the smallest open subset of $X$ containing $p$. Consequentially, $U_p$ is local, with closed point $p$. Let $D_X$ be the global dualizing complex on $X$, i.e., the complex of sheaves on $X$ that dualizes cohomology (Definition \ref{defglobaldualizing}). If $X$ is local, let $D_X^\0$ be the local dualizing complex of $X$, which dualizes local cohomology at $\0$ (Definition \ref{deflocaldualizing}). We say that $X$ is \emph{dualizable} if it admits a canonical complex (Definition \ref{defcanonical}) and $X$ is \emph{locally dualizable} if $U_p$ is dualizable for every $p\in X$. Locally dualizable spaces are characterized as those spaces $X$ satisfying: (1) $X$ is catenary (every closed interval is pure), and (2) every open interval is a cohomological sphere (Definition \ref{coh-sphere}). Furthermore, a local space $X$ is dualizable if and only if the local dualizing complex $D_X^\0$ is a canonical complex. The category of locally dualizable spaces contains all simplicial and locally simplicial spaces, as well as other posets like spheres and simplicial posets; moreover,  it is closed under taking open or closed subsets, and products.

Let us define now a proper space. We shall prove   that for every finite space $X$ and every closed point $x_0$, there exists a natural morphism
\[\phi_{x_0}\colon (D_X)_{\vert U_{x_0}}\to D_{U_{x_0}}^{x_0},\] which motivates the following definition.

\smallskip
\noindent{\bf\ \ Definition.} $X$ is proper if $\phi_{x_0}$ is an isomorphism for every closed point $x_0$; that is, if the global and local dualizing complexes are compatible.
\smallskip

If $X$ is locally dualizable, then $X$ is proper if and only if $D_X$ is a canonical complex (Corollary \ref{dualizante=canonico}), as mentioned above.

Naturally, this definition of properness requires a more manageable criterion. For each $p\in X$, let $C_p$ be the closure of $\{p\}$ and set $\partial C_p=C_p- \{p\}$. The criterion is provided by the following result (see Theorems \ref{thmproperspace} and  \ref{proper+dualizable}):

\smallskip
\noindent{\bf\ \ Theorem.} {\it Let $X$ be a finite space. The following conditions are equivalent:
\begin{enumerate}\item $X$ is proper.
\item  For every closed point $x_0\in X$ and every $p>x_0 $, the space $\partial C_p-\{x_0\}$ is cohomologically trivial (Definition \ref{c-trivial}). 
\item For every closed point $x_0\in X$ and every $p>x_0 $, one has
\[ H^i_{x_0}(\partial C_p,\ZZ) = H^i_{\text{\rm red}}(\partial C_p,\ZZ)\quad \text{for all }i,\]
where $H^i_{x_0}$ denotes the local cohomology groups at $x_0$ and $H^i_{\text{\rm red}}$ denotes the reduced cohomology groups.
\end{enumerate}

Moreover, if $X$ is locally dualizable, the following conditions are equivalent:

\begin{enumerate}
\item $X$ is proper.
\item For every $p\in X$,    $\partial C_p$ is pure and  a cohomological sphere.
\item For every $p\in X$, $\partial C_p $ is pure, and $\partial C_p-\{x_0\}$ is cohomologically trivial for some closed point $x_0\in\partial C_p$.
\end{enumerate}}
\smallskip

We now turn our attention to proper maps, formulated in such a way that a finite space $X$ is proper if and only if the projection to a point, $X\to \{*\}$, is proper. A first approximation to proper maps was given in \cite{Sanchoetal} through the notion of a cohomologically proper map (or c-proper map for short). These are maps whose cohomological behavior mimics that of proper maps between locally compact Hausdorff spaces. More precisely, a proper map $f\colon X\to Y$ between locally compact Hausdorff spaces satisfies the following cohomological properties (see \cite{Iversen}):

(1) Base change for cohomology: for every sheaf $F$ on $X$ and every $y\in Y$, there is a natural isomorphism
\[(R^if_*F)_y= H^i(f^{-1}(y),F_{\vert f^{-1}(y)}),\quad \text{for all }  i.\]

(2) Duality is local on $Y$: the functor $f^\times\colon \mathrm{D}^+(Y)\to \mathrm{D}^+(X) $ (the right adjoint of $\RR f_*$) is local on $Y$. Equivalently, the local form of duality holds:  for every $F\in \mathrm{D}^+(X),K\in \mathrm{D}^+(Y)$, there is a natural   isomorphism
\[\RR f_*\RR\HHom_X^\pun(F,f^\times K)\overset\sim\to\RR\HHom_Y^\pun(\RR f_*F,K).\] 
Now, a continuous map $\colon X\to Y$ between finite spaces is c-proper if the induced map $f_{\vert C_p}\colon C_p\to C_{f(p)}$ has cohomologically trivial fibres, for every $p\in X$. It is proved in \cite{Sanchoetal} that $f$ is c-proper if and only if $f$ satisfies base change for cohomology, which in turn is equivalent to duality being local on $Y$.

Thus, it is natural to require a proper map to be c-proper. But being c-proper is too  weak a condition because the projection to a point $X\to\{*\}$ is   c-proper for every space $X$. We must therefore impose an additional condition on $f$, which serves as a relative version of the properness condition on $X$ introduced above (the compatibility of global and local dualizing complexes). This is achieved as follows.

We say that a point $x_0\in X$ is $f$-closed if $x_0$ is closed in $f^{-1}(f(x_0))$. We shall prove (Proposition \ref{f-localdual}) that the functor $f^\times$ (for $f\colon f^{-1}(U_{f(x_0})\to U_{f(x_0)}$) relates the local dualizing complexes of $U_{x_0}$ and $U_{f(x_0)}$ via a natural morphism
\[ \phi_{x_0}\colon (f^\times D_{U_{f(x_0)}}^{f(x_0)})_{\vert U_{x_0}}\to D_{U_{x_0}}^{x_0}.\] We say that $f^\times$ is \emph{compatible with local dualizing complexes} if $\phi_{x_0}$ is an isomorphism for every $f$-closed point $x_0$. We  then define a proper map $f\colon X\to Y$ as a c-proper map such that $f^\times$ is compatible with local dualizing complexes. Consequently, $X$ is proper if and only if $X\to\{*\}$ is proper. 

We shall prove that the fibers of a proper map are proper, although the converse does not hold (paralleling the classical geometric contexts). The extra condition (besides c-properness) involves a formula relating the reduced cohomology of the intervals $(x_0,p)$ (with $x_0$ an $f$-closed point), $(f(x_0),f(p))$ and the generic fibre  $G_p$     of $f_{\vert C_p}\colon C_p\to C_{f(p)}$. See Theorem \ref{thmproperonfibres} for the precise statement. Under local dualizability conditions on $X$ and $Y$ the result reads as follows (see Theorem \ref{properfibresld}):

\smallskip
\noindent{\bf \ \ Theorem.} {\it  Let $f\colon X\to Y$ be a c-proper map between locally dualizable spaces. The following conditions are equivalent:
\begin{enumerate}\item $f$ is proper.
\item $f$ has proper fibres and, for every $f$-closed point $x_0$ and every $p>x_0$ such that $f(p)> f(x_0)$, one has: 
\[\dim [x_0,p]=\dim [f(x_0),f(p)] + \dim G_p, 
\] where $G_p$  is the closure of $\{p\}$ in $f^{-1}(f(p))$, i.e.,  the generic fibre of $f_{\vert C_p}\colon C_p\to C_{f(p)}$.
\end{enumerate}}
\smallskip

As a consequence, we shall show (Corollary \ref{properonvarieties}) that a proper map $f\colon X\to Y$ between locally dualizable spaces satisfies the dimensional formula
\[ \dim C_p=\dim C_{f(p)}+\dim G_p\] for every $p\in X$. Conversely, every c-proper map with proper fibres and satisfying this formula is proper under some extra purity conditions on $X$ and $Y$ (see Proposition \ref{proper-formula}). Notice the analogy of this identity with the dimensional formula for a dominant map between algebraic varieties.

We now outline the organization of the paper and summarize our other results.  Section \ref{prelim-section} is  devoted to recall some basic facts on finite spaces and their derived categories, local and global dualizing complexes on them, canonical complexes and locally dualizable spaces, Cohen--Macaulayness and cohomologically proper maps. Only a few results in this section  are new: Proposition \ref{transitivity}, Corollaries \ref{loc-dual-op} and \ref{Gor}, and Proposition \ref{cohpropsup} (the latter of which plays a crucial role in the study of proper maps).

Section \ref{loc-glob-compatibility} investigates the compatibility between local and global dualizing complexes. Specifically,  we  construct the previously mentioned morphism $\phi_{x_0}\colon (f^\times D_{U_{f(x_0)}}^{f(x_0)})_{\vert U_{x_0}}\to D_{U_{x_0}}^{x_0}$ and provide various  characterizations for this morphism to be an isomorphism in Proposition \ref{equivalencias}, which serves as the key technical result for  Theorems \ref{thmproperonfibres} and \ref{properfibresld} mentioned above.

Section \ref{section-properness} is dedicated to proper maps and proper spaces. In addition to the results mentioned above and several expected general properties (e.g., that the composition of proper maps is proper and that closed immersions are proper), we establish further results under the locally dualizable hypothesis in subsection \ref{subsection-ld-properness}.  For instance, we prove that proper maps are precisely those c-proper maps such that $f^\times $ maps canonical complexes to canonical complexes (Proposition \ref{proper-canonical}). This constitutes the relative version of the characterization of proper (locally dualizable) spaces via the canonicity of their global dualizing complexes.
As a consequence, we show (Corollary \ref{c-proper=proper})  that every c-proper map between proper and dualizable spaces is proper. We also demonstrate the abundance of proper subspaces: for every locally dualizable space $X$ and every $p\in X$, the (open) subspace $U_p-\{p\}$ is proper (Corollary \ref{abundance}). Note that if $X$ is an abstract simplicial complex, then $U_p-\{p\}$ is  isomorphic to  the link of $p$. A final, unexpected result of this section—for which we do not know of a classical geometric analog—is given by Theorem \ref{properversusCM}: a locally dualizable space $X$ is proper if and only if its dual space $X^{\rm op}$ is Cohen--Macaulay with an invertible canonical sheaf.

Finally, section \ref{section-examples} explores two basic examples. On the one hand we prove that proper maps between abstract simplicial complexes are precisely simplicial maps (Theorem \ref{simplicial=proper} and Remark \ref{simplicial=proper2}). On the other hand we show that a connected $1$-dimensional space is proper if and only if each of its irreducible components   is a  projective line; consequently, a $1$-dimensional proper space is equivalent to an undirected multigraph without loops (Proposition \ref{completecurves} and Remark \ref{completecurves2}).

This paper stands as a delayed and grateful response to a question raised by the anonymous referee of \cite{ST2}. In their report, the referee inquired about an $f^!$ theory ($f$ being a continuous map between finite spaces) with properties and behavior analogous to the $f^!$ theory for schemes or locally compact Hausdorff spaces. As they pointed out: ``In the context of a map $f \colon X \to Y$ of finite spaces, I am unsure what the correct analog of $f^!$ should be since the analogs of separated or proper maps with certain desired properties is hard to guess or may not even exist''. This paper demonstrates that, at the very least, the analog of proper maps \emph{does} exist, thus providing the first step toward developing such an $f^!$ theory on finite spaces.

\section{Preliminaries and notations}\label{prelim-section}

\noindent{\S\bf \  Finite spaces.} Throughout this paper, a finite space means a finite and $T_0$ topological space. All topological spaces considered in this paper will be assumed, by default, to be finite spaces. As  is well known, a finite space is equivalent to a finite poset, but we shall preferably adopt the topological point of view. For each $p\in X$ we  denote
\[ \aligned U_p &=\text{ smallest open subset of } X \text{ containing } p, 
\\ C_p & = \text{ smallest closed subset of } X \text{ containing } p =\text{ closure of } \{p\}.\endaligned\]
We also introduce the notation $$U_p^*:=U_p\negmedspace - \negmedspace \{p\}\quad, \quad \partial C_p :=C_p\negmedspace - \negmedspace \{p\}.$$

The partial order on $X$ is defined by
\[p\leq q \Leftrightarrow C_p\subseteq C_q \Leftrightarrow U_p\supseteq U_q.\] A subset $U$ of $X$ is open if and only if it is an upper set (increasing): $p\in U$ and $q\geq p$ implies $q\in U$. Analogously, a subset $C$ of $X$ is closed if and only if it is a lower set (decreasing): $p\in C$ and $q\leq p$ implies $q\in C$. Thus, one has:
\[ \aligned U_p &=\left\{ x\in X: x\geq p\right\}  
\\ C_p &=\left\{ x\in X: x\leq p\right\}.\endaligned\] A point $p\in X$ is closed if and only if it is minimal; analogously, $p$ is open if and only if it is maximal. Open points will be referred to as {\em generic points} of $X$, since their closures constitute the irreducible components of $X$.

A map $f\colon X\to Y$ between finite spaces is continuous if and only if it is monotone (order-preserving): $p\leq q\implies f(p)\leq f(q)$. 

The dual space of $X$ is denoted by $X^{\text{\rm op}}$. It is the same underlying set equipped with the dual topology (i.e., the reversed partial order), meaning that  an open subset of $X^{\text{\rm op}}$ is a closed subset of $X$. For each $p\in X$, we  denote  $\wh p\in X^{\text{\rm op}}$ the same element realized in the dual space.

\begin{defn} {\rm We say that a finite space $X$ is {\em local} if it has a unique closed point (i.e., $X$ has a minimum) which we shall usually denote by $\0$. The open complement $X\negmedspace - \negmedspace\{\0\}$ will be usually denoted by $X^*$. For every point $p\in X$, $U_p$ is local  with closed point $p$, and $C_p$ is irreducible with generic point $p$.}
\end{defn}

\begin{defn}{\rm  The {\em dimension}  of a finite space $X$, denoted by $\dim X$, is the maximal length of the chains  
\[ C_0 \subset C_1\subset \cdots\subset C_n\] of irreducible closed subsets of $X$; equivalently, it is the maximal length of the chains of points $p_0 <p_1 <\cdots < p_n$ in $X$. We  also define $\dim \emptyset=-1$. For every $p\in X$, we denote $\codim(p,X)=\dim U_p$, which is the maximal length of the chains of points of $X$ starting at $p$.} 
\end{defn}

\begin{defn}\label{dimension+pure} {\rm A finite space  $X$ is called {\em pure} if all  maximal chains $p_0 <p_1 <\cdots < p_n$ have length equal to $\dim X$. }\end{defn}

\begin{defn}\label{catenary}{\rm For every $x <y$, we  denote
\[\aligned  (x,y)&:=U_x^*\cap  \partial C_y=\{ p\in X: x<p<y\} 
\\   [x,y]&:= U_x  \cap  C_y =\{ p\in X: x\leq p\leq y\}.\endaligned \] We say that $X$ is {\em catenary} if every interval $[x,y]$ is pure; that is, every maximal chain from $x$ to $y$ has length equal to $\dim [x,y]$. 
}
\end{defn}

\noindent{\S\bf \  Affine and projective spaces;  spheres.}
\medskip

{\em The affine space}. We  denote $$\A^1_{\FF_1}=\{\0,\un\} , \text{ with }  \0 <\un.$$  and set $$\A^n_{\FF_1}= \A^1_{\FF_1}\times\overset n\cdots\times \A^1_{\FF_1}.$$ This is a local and irreducible space of dimension $n$ with closed point $\0=(\0,\dots,\0)$ and generic point $\un=(\un,\dots,\un)$. It can be identified with the poset of subsets of a set with $n$ elements, ordered by inclusion. For every $p\in\A^n_{\FF_1}$, one has
\[ C_p=\A^{\dim C_p}_{\FF_1}\quad ,\quad U_p=\A^{\dim U_p}_{\FF_1}.\]
\medskip

The {\em $n$-dimensional projective space} $\PP^n_{\FF_1}$ is defined as:
\[ \PP^n_{\FF_1}=\A^{n+1}_{\FF_1}-\{\0\}.
\] It is an irreducible open subset of $\A^{n+1}_{\FF_1}$ of dimension $n$ with generic point $\un$.  It can be identified with the poset of non-empty subsets of a set with $n+1$ elements. For every $p\in\PP^n_{\FF_1}$, one has
\[ C_p=\PP^{\dim C_p}_{\FF_1}\quad ,\quad U_p=\A^{\dim U_p}_{\FF_1}.\]

 A {\em simplicial complex} $K$ (with at most $n$ vertices) is a closed subset of $\A^n_{\FF_1}$. A {\em projective simplicial complex} is a closed subset of $\PP^{n}_{\FF_1}$. Evidently, if $K$ is a simplicial complex, then $K^*=K\negmedspace - \negmedspace\{\0\}$ is a projective simplicial complex; conversely, adding $\0$ to a projective simplicial complex yields a simplicial complex. Projective simplicial complexes are standardly called \emph{abstract simplicial complexes} in the combinatorics literature.  A finite space $X$ is {\em locally simplicial} if $U_p$ is a simplicial complex for every $p\in X$. Every simplicial complex and every projective simplicial complex is locally simplicial. A finite space $X$ is locally simplicial if and only if $X^{\text{\rm op}}$ is locally simplicial (this follows from the fact that $(\A^n_{\FF_1})^{\text{\rm op}}\simeq \A^n_{\FF_1}$). Any 1-dimensional finite space is locally simplicial.  
 
 \begin{ex} {\rm For every finite space $X$, its {\em barycentric subdivision} $\beta X$ is the set of non-empty, totally ordered subsets of $X$. It is a closed subset of the poset of all non-empty subsets of $X$. Hence, $\beta X$ is a projective simplicial complex.}
\end{ex}
 
 There exist noteworthy finite spaces that are not locally simplicial. For example:

\begin{defn}\label{spheres}{\rm The {\em $n$-dimensional sphere} $\Sa^n$ is the finite space with $2n+2$ elements $$\Sa^n=\{p_0,\dots,p_n,q_0,\dots ,q_n\},$$ endowed with the partial order:
\[ p_i<p_j, q_j \text{ and } q_i < q_j,p_j \, ,\, \text{ for all } i<j.\]}\end{defn} It has dimension $n$ and constitutes  the minimal finite model of the standard
$n$-dimensional sphere $S^n$ (see \cite{Barmak}). The generic points of $\Sa^n$ are $p_n,q_n$. Thus, $\Sa^n$ has two irreducible components $C_{p_n},C_{q_n}$, whose intersection is an $(n-1)$-dimensional sphere. For every $p\in\Sa^n$, one has:
\[ U_p^*=\Sa^{\dim U_p^*}   \quad  , \quad \partial C_p=\Sa^{\dim \partial C_p}\qquad (\text{where we define }  \Sa^{-1}:=\emptyset).\]

The sphere $\Sa^n$ is not locally simplicial for $n>1$. While $\Sa^1$ is locally simplicial, it is not a simplicial complex.
\medskip

%\begin{defn} {\rm A morphism $f\colon X\to Y$ is {\em locally simplicial} if for any $x\in X$, the map $f_{\vert U_x}\colon U_x\to U_{f(x)}$ factors through a closed immersion $U_x\hookrightarrow U_{f(x)}\times \A^n_{\FF_1}$ and the natural projection $U_{f(x)}\times \A^n_{\FF_1} \to U_{f(x)}$. 
%}\end{defn}

Borrowing terminology from algebraic geometry, we say that $f\colon X\to Y$ is a {\em proyective morphism} if, locally on $Y$, it factors through a closed immersion $X\hookrightarrow Y\times\PP^n_{\FF_1}$ followed by the natural projection $Y\times\PP^n_{\FF_1}\to Y$. This definition features a phenomenon that does not occur in algebraic geometry: the composition of projective morphisms is not necessarily projective. This stems from the fact that the product $\PP^n_{\FF_1}\times \PP^m_{\FF_1}$ of two projective spaces is not a projective simplicial complex (i.e., it does not admit  a closed immersion into a projective space). As we shall see, proper morphisms will resolve this issue.
%It is clear that every projective morphism is locally simplical. 
A more general concept is given by  the following:

\begin{defn}{\rm  
A morphism $f\colon X\to Y$ is {\em locally of finite type} if, for every $x\in X$, the restriction $f_{\vert U_x}\colon U_x\to U_{f(x)}$ factors through a closed immersion $U_x\hookrightarrow U_{f(x)}\times T$ followed by  the natural projection $U_{f(x)}\times T \to U_{f(x)}$, for some finite space $T$.}
\end{defn} 

Every closed or open immersion is locally of finite type, as is the natural projection $X\times Y\to Y$. The composition of morphisms locally of finite type is also locally of finite type. Moreover, every projective morphism is locally of finite type.

\bigskip
\noindent{\S\bf \  Local and global dualizing complexes.} We recall here the definitions and main properties of global and local dualizing complexes. We first introduce some standard notations.

We  denote by $\mathrm {D}(\ZZ)$ the derived category of the category $C(\ZZ)$ of complexes of abelian groups. 

For every $E\in \mathrm{D}(\ZZ)$, we denote its derived dual by $E^\vee:=\RR\Hom^\pun_\ZZ(E,\ZZ)$.

We shall denote by $C(X)$ the category of complexes of sheaves of abelian groups on $X$ and by $\mathrm{D}(X)$ its derived category.  For every complex $F\in C(X)$ and every locally closed subset $S$ of $X$, we  denote by $F_S$ the complex $F$ supported in $S$; it is the unique complex satisfying
\[ (F_S)_{\vert S}=F_{\vert S}\quad , \quad (F_S)_{\vert X-S}=0.\] 
To avoid confusion, for each $p\in X$, $F_p$ will denote  the stalk of $F$ at $p$, whereas $F_{\{p\}}$ will denote the complex $F$ supported in $\{p\}$.

For any two complexes $F,G\in \mathrm{D}(X)$, we write $\RR\Hom_X^\pun(F,G)\in \mathrm{D}(\ZZ)$ for the derived complex of homomorphisms,   $\RR\HHom_X^\pun(F,G)\in \mathrm{D}(X)$ for the derived complex of sheaves of homomorphisms, and $F\overset\LL\otimes G$ for the derived tensor product. 

We denote by    $\RR\Gamma(X,-)$  the right derived functor  of the functor of sections $\Gamma(X,-)$. For every complex $F\in\mathrm{D}(X)$, the groups
\[ H^i(X,F):=H^i[\RR\Gamma(X,F)] \] are the cohomology  groups of $F$. For a sheaf $F$ on $X$, $H^i(X,F)$ vanishes for all $i\notin[0,\dim X]$. It is a trivial but fundamental fact regarding finite spaces that $\Gamma(U_x,F)=F_x$, and hence $\Gamma(U_x,-)$ is an exact functor.

A classical result that we shall utilize later  is that every finite space has the same integral cohomology as  its dual space  (see, for example, \cite[Cor. 4.2.6]{Sanchoetal} for a proof)).
\begin{prop}\label{X=Xop} For every finite space $X$, one has $$\RR\Gamma(X,\ZZ)=\RR\Gamma(X^{\text{\rm op}},\ZZ).$$
\end{prop}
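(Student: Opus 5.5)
The plan is to compute both sides by an explicit complex that is visibly symmetric under reversing the order. For any finite space $X$ and any sheaf $F$, the standard (Roos-type) resolution gives
\[ \RR\Gamma(X,F)\simeq C^\pun(X,F),\qquad C^n(X,F)=\prod_{x_0<x_1<\cdots<x_n} F_{x_n}, \]
with the usual alternating differential. This complex is the cochain complex of the semi-simplicial object built from the strict chains of $X$. To justify it, I would resolve $F$ by the sheaves $\prod_{x_0<\cdots<x_n}(i_{x_n})_*F_{x_n}$, where $i_x\colon U_x\hookrightarrow X$ and the stalk is pushed forward as a constant sheaf on $U_x$. These sheaves are flasque (indeed $\Gamma$-acyclic), because $\Gamma(U_x,-)$ is exact on a finite space. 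Exactness of the resolution is checked stalkwise: at a point $p$ one restricts to chains inside $U_p$, which has a minimum $p$, and this yields a contractible cone. This is the standard argument, and I would simply take it as known.

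Next, specialize to $F=\ZZ$. Then $C^n(X,\ZZ)=\prod_{\text{chains of length }n}\ZZ$, which is exactly the simplicial cochain complex of the order complex, that is, of the barycentric subdivision $\beta X$ viewed as an abstract simplicial complex. The simplices of $\beta X$ are the non-empty totally ordered subsets, and the face maps delete an element. So $\RR\Gamma(X,\ZZ)\simeq C^\pun_{\text{simp}}(\beta X,\ZZ)$ (ordered or oriented cochains, which agree up to quasi-isomorphism). The key observation is that a totally ordered subset of $X$ is the same thing as a totally ordered subset of $X^{\text{\rm op}}$. Hence $\beta X=\beta(X^{\text{\rm op}})$ as abstract simplicial complexes, and the simplicial cochain complexes coincide. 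Applying the same identification to $X^{\text{\rm op}}$ gives $\RR\Gamma(X^{\text{\rm op}},\ZZ)\simeq C^\pun_{\text{simp}}(\beta X,\ZZ)\simeq\RR\Gamma(X,\ZZ)$.

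The main obstacle is the sign and ordering bookkeeping. In the chain complex for $X$ the coefficient sits at the top $x_n$, whereas in the one for $X^{\text{\rm op}}$ it sits at the bottom $x_0$, and the face indices are reversed. For constant coefficients the coefficient position does not matter. The reversal of indices $x_i\mapsto x_{n-i}$ changes the differential by the sign $(-1)^n$ pattern, and this is absorbed by the isomorphism $C^n\to C^n$ given by multiplication by $(-1)^{n(n+1)/2}$. I would check this once explicitly; alternatively, I would avoid it by passing to oriented cochains of $\beta X$, where no choice of order enters. If one prefers a more conceptual route, one can instead invoke McCord's theorem that $X$ and $X^{\text{\rm op}}$ are both weakly equivalent to $|\beta X|$, together with the identification of sheaf cohomology with constant coefficients on a finite space with singular cohomology. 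The combinatorial route above, however, stays within the sheaf-theoretic framework of the paper.
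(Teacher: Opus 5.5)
The paper gives no argument of its own here; it only quotes \cite[Cor.~4.2.6]{Sanchoetal}. So there is nothing internal to compare against, and your proof stands on its own. It is correct.

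The essential observation is the one you isolate. The complex $\prod_{x_0<\cdots<x_n}\ZZ$ with the alternating differential is literally the oriented simplicial cochain complex of $\beta X$, with each simplex oriented by the order of $X$. Hence no comparison between ordered and oriented cochains is needed. Since $\beta X=\beta(X^{\text{\rm op}})$, the two sides differ only in the orientation convention.

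Your sign is right. Under $\wh x_n<\cdots<\wh x_0\leftrightarrow x_0<\cdots<x_n$, the differential $C^{n-1}\to C^n$ for $X^{\text{\rm op}}$ is $(-1)^n$ times the one for $X$. Moreover $\epsilon_n=(-1)^{n(n+1)/2}$ satisfies $\epsilon_n=(-1)^n\epsilon_{n-1}$; it is the sign of the order-reversing permutation of $n+1$ letters. So multiplication by $\epsilon_n$ is an isomorphism of complexes.

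Compared with the citation, or with the McCord route you mention, your route gives an explicit isomorphism at the cochain level using only the standard resolution. What it does not provide is the homotopy-theoretic explanation, namely that $X$ and $X^{\text{\rm op}}$ are both weakly equivalent to $|\beta X|$.

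There is one slip in the step you take as known. The resolving sheaf attached to a chain $x_0<\cdots<x_n$ is not the pushforward of the constant sheaf $F_{x_n}$ along $U_{x_n}\hookrightarrow X$. That sheaf has stalk $\Gamma(U_p\cap U_{x_n},F_{x_n})$ at $p$, which can be nonzero for chains not contained in $U_p$, and it need not be flasque. For example, take $\Sa^1=\{u,v,a,b\}$ with $u,v<a,b$, and the chain $\{u\}$. Its sheaf has $\Gamma(X,-)=\Gamma(U_u,\ZZ)=\ZZ$ and $\Gamma(U_v,-)=\Gamma(\{a,b\},\ZZ)=\ZZ^2$. The restriction map is the diagonal, which is not surjective, and the stalk at $v$ is nonzero.

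The correct sheaf is $(F_{x_n})_{C_{x_0}}$, i.e.\ the pushforward of $F_{x_n}$ along $\{x_0\}\hookrightarrow X$. Its sections over an open $V$ are $F_{x_n}$ if $x_0\in V$ and $0$ otherwise, so it is visibly flasque. Its stalk at $p$ is $F_{x_n}$ if $p\le x_0$ and $0$ otherwise, which gives exactly the cone on $p$ that you use for stalkwise exactness. Its global sections are your $C^n(X,F)$. With this correction the argument is complete.
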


For every closed subset $Y$ of $X$, we  denote $\RR\Gamma_Y(X,-)$ the right derived functor of the functor of sections with support in $Y$ and $\RR\underline\Gamma_Y $ its sheafified version. There are natural isomorphisms $\RR\Hom_X^\pun(\ZZ_Y,F)=\RR\Gamma_Y(X,F)$ and $\RR\HHom_X^\pun(\ZZ_Y,F)=\RR\underline\Gamma_Y F $. For every open subset $U\subseteq X$ containing $Y$, one has $\RR\Gamma_Y(X,F)=\RR\Gamma_Y(U,F)$, which will be referred to as the excision property. 

For every continuous map $f\colon X\to Y$, $\RR f_*\colon \mathrm{D}(X)\to \mathrm{D}(Y)$ denotes the right derived functor  of the direct image $f_*$. It is left adjoint of the inverse image functor $f^{-1}$.

\begin{defn}\label{defglobaldualizing} {\rm Let $f\colon X\to Y$ be a continuous map. We  denote by
\[ f^\times\colon \mathrm{D}(Y)\to \mathrm{D}(X)\]
the right adjoint of the functor $\RR f_*\colon \mathrm{D}(X)\to \mathrm{D}(Y)$ (see \cite{Na}, \cite{Curry}, \cite{Sanchoetal}).  
 Thus, for every $F\in \mathrm{D}(X)$ and every $G\in \mathrm{D}(Y)$, there is a natural isomorphism
\[\RR\Hom_X^\pun(F,f^\times G )=\RR\Hom_Y^\pun(\RR f_*F,G).\] If $Y$ is   a single point, we set $D_X=f^\times\ZZ$ and refer to it as the  {\em global dualizing complex} of $X$. By definition,
\[\RR\Hom_X^\pun(F,D_X)= \RR\Gamma(X,F)^\vee.\]}\end{defn}

\begin{defn}{\rm  Let $f\colon X\to Y$ be a continuous map and let $Z$ be a closed subset of $X$. For every sheaf $F$ on $X$, we define the sheaf $f_*^ZF$ on $Y$ via
\[ (f_*^ZF)(V)=\Gamma_{Z\cap f^{-1}(V)}(f^{-1}(V),F)\] for every open subset $V$ of $Y$. In other words,
\[ f_*^ZF=f_*\HHom_X(\ZZ_Z,F)=f_*{\underline\Gamma}_ZF.\] Its right derived functor
\[ \RR f_*^Z\colon \mathrm{D}(X)\to \mathrm{D}(Y) \] can be identified as $\RR f_*^ZF=\RR f_* \RR\HHom_X^\pun(\ZZ_Z,F)=\RR f_*\RR {\underline\Gamma}_ZF$. This functor also has a right adjoint (\cite{ST2}),  denoted by
\[ f^\times_Z\colon \mathrm{D}(Y)\to \mathrm{D}(X).\] If $Y$ is a single point, we set $D_X^Z:=f^\times_Z\ZZ$ and call it the \emph{local dualizing complex of $X$ along $Z$}.}
\end{defn}

The following particular case will be of special relevance:
\begin{defn}\label{deflocaldualizing}{\rm Let $X$ be a finite local space, and let $\mathbf{0} \in X$ be its unique closed point. The complex $D_X^\0$ will be called {\em the local dualizing complex of $X$}. By definition,
\[\RR\Hom_X^\pun(F,D_X^\0)= \RR\Gamma_\0(X,F)^\vee=\RR\Hom_X^\pun(\ZZ_{\{\0\}},F)^\vee.\]
}
\end{defn} 

\begin{prop}[Transitivity of $f^\times_Z$]\label{transitivity} Let $  X\overset f\to Y\overset g\to T$ be continuous maps, let $Z$ be a closed subset of $X$, and let $W$ be a closed subset of $Y$. There is a natural isomorphism
\[ f_Z^\times\circ g_W^\times = (g\circ f)_{Z\cap f^{-1}(W)}^\times.\]
\end{prop}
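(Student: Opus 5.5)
Since both sides are right adjoints, the plan is to prove the isomorphism for the corresponding left adjoints and then conclude by uniqueness of adjoints. Here $f^\times_Z$ is right adjoint to $\RR f_*^Z=\RR f_*\RR\underline\Gamma_Z$, $g^\times_W$ is right adjoint to $\RR g_*\RR\underline\Gamma_W$, and $(g\circ f)^\times_{Z\cap f^{-1}(W)}$ is right adjoint to $\RR(g f)_*\RR\underline\Gamma_{Z\cap f^{-1}(W)}$. The composite $f^\times_Z\circ g^\times_W$ is right adjoint to $\RR g^W_*\circ\RR f^Z_*$. It therefore suffices to produce a natural isomorphism
\[ \RR g_*\RR\underline\Gamma_W\RR f_*\RR\underline\Gamma_Z F\simeq \RR(g f)_*\RR\underline\Gamma_{Z\cap f^{-1}(W)}F \]
for $F\in\mathrm{D}(X)$.

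The first step handles the outer functors. Use $\RR(gf)_*=\RR g_*\RR f_*$; this holds because $f_*$ sends injectives to injectives, being right adjoint to the exact functor $f^{-1}$.

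The second step is the key one: a derived base-change formula
\[ \RR\underline\Gamma_W\RR f_*G\simeq \RR f_*\RR\underline\Gamma_{f^{-1}(W)}G. \]
At the underived level, for an open $V\subseteq Y$ one has
\[ \Gamma_{W\cap V}(V,f_*G)=\Gamma_{f^{-1}(W)\cap f^{-1}(V)}(f^{-1}(V),G), \]
since a section of $f_*G$ over $V$ has support in $W$ exactly when, viewed as a section of $G$ over $f^{-1}(V)$, its support lies in $f^{-1}(W)$. Thus $\underline\Gamma_W f_*=f_*\underline\Gamma_{f^{-1}(W)}$. To derive this, apply both sides to an injective complex $G$. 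Then $f_*G$ is injective, and $\underline\Gamma_{f^{-1}(W)}G$ is again injective, because $\underline\Gamma_{f^{-1}(W)}=\HHom(\ZZ_{f^{-1}(W)},-)$ has the exact left adjoint $-\otimes\ZZ_{f^{-1}(W)}$. Hence both sides compute the derived functors. Equivalently, one can argue by adjunction, using $\HHom(\ZZ_W,f_*G)=f_*\HHom(f^{-1}\ZZ_W,G)$ and $f^{-1}\ZZ_W=\ZZ_{f^{-1}(W)}$.

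The third step combines supports via
\[ \RR\underline\Gamma_{f^{-1}(W)}\RR\underline\Gamma_Z=\RR\underline\Gamma_{Z\cap f^{-1}(W)}. \]
Underived this is clear: $\HHom(\ZZ_{A},\HHom(\ZZ_B,-))=\HHom(\ZZ_A\otimes\ZZ_B,-)$ and $\ZZ_A\otimes\ZZ_B=\ZZ_{A\cap B}$. For the derived statement, note again that $\underline\Gamma_Z$ preserves injectives.

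Chaining the three steps gives the isomorphism of left adjoints, and the isomorphism of right adjoints follows. The only point needing care is naturality and compatibility with composition of adjunctions, which is formal. The main obstacle, though a mild one, is justifying that each underived identity passes to derived functors, and this rests on the preservation of injectives noted above.
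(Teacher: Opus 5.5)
Your proposal is correct and follows essentially the same route as the paper: reduce by uniqueness of adjoints to the identity $\RR g_*^W\circ\RR f_*^Z=\RR(g\circ f)_*^{Z\cap f^{-1}(W)}$, then verify it using the sheafified adjunction $f^{-1}\dashv\RR f_*$ (your second step) and the tensor--hom adjunction with $\ZZ_{f^{-1}(W)}\otimes\ZZ_Z=\ZZ_{Z\cap f^{-1}(W)}$ (your third step). The only difference is that you also justify, via preservation of injectives, why the underived identities pass to the derived functors, which the paper takes for granted.
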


\begin{proof} By adjunction, it suffices to show that $\RR g_*^W\circ\RR f_*^Z =\RR (g\circ f)_*^{Z\cap f^{-1}(W)}$. Indeed, for every $F\in \mathrm{D}(X)$, we have:
\[\begin{aligned} 
(\mathbb{R} g_*^W \circ \mathbb{R} f_*^Z)(F) 
&= \mathbb{R} g_* \mathbb{R}\mathcal{H}om_Y^\bullet(\mathbb{Z}_W, \mathbb{R} f_* \mathbb{R}\mathcal{H}om_X^\bullet(\mathbb{Z}_Z, F)) \\ 
&= \mathbb{R} g_* \mathbb{R} f_* \mathbb{R}\mathcal{H}om_X^\bullet(\mathbb{Z}_{f^{-1}(W)}, \mathbb{R}\mathcal{H}om_X^\bullet(\mathbb{Z}_Z, F)) \\ 
&= \mathbb{R} g_* \mathbb{R} f_* \mathbb{R}\mathcal{H}om_X^\bullet(\mathbb{Z}_{f^{-1}(W)} \overset{\mathbb{L}}{\otimes} \mathbb{Z}_Z, F) \\ 
&= \mathbb{R}(g \circ f)_* \mathbb{R}\mathcal{H}om_X^\bullet(\mathbb{Z}_{f^{-1}(W) \cap Z}, F) \\ 
&= \mathbb{R}(g \circ f)_*^{Z \cap f^{-1}(W)}(F),
\end{aligned}
\]
where we have applied the sheafified adjunction $f^{-1} \dashv \mathbb{R} f_*$ in the second isomorphism, and the sheafified tensor-hom adjunction $\overset{\mathbb{L}}{\otimes} \dashv \mathbb{R}\mathcal{H}om$ in the third.

%\[\aligned (\RR g_*^W\circ\RR f_*^Z )(F) &= \RR g_*\RR\HHom_Y^\pun(\ZZ_W,\RR f_*\RR\HHom_X^\pun(\ZZ_Z,F)) 
%\\ (\text{sheafified adjunction }f^{-1}\leftrightarrow \RR f_* ) &= \RR g_*\RR f_*\RR\HHom_X^\pun(\ZZ_{f^{-1}(W)},\RR\HHom_X^\pun(\ZZ_Z,F)) 
%\\ (\text{sheafified adjunction }\overset\LL\otimes\leftrightarrow\RR\HHom ) &= \RR g_*\RR f_*\RR\HHom_X^\pun(\ZZ_{f^{-1}(W)}\overset\LL\otimes\ZZ_Z,F) 
%\\ &= \RR (g\circ f)_*\RR\HHom_X^\pun(\ZZ_{f^{-1}(W)\cap Z} ,F) 
%\\ &= \RR (g\circ f)_*^{Z\cap f^{-1}(W)}(F).
%\endaligned.\]
\end{proof}

\medskip
\noindent{\S\bf \ Dualizable and locally dualizable spaces.}
\medskip

For every finite space $X$, we denote by $\mathrm{D_c}(X)$   the full subcategory of $\mathrm{D}(X)$ consisting of complexes $F$   whose cohomology sheaves  $H^i(F)$ are finitely generated (i.e., their stalks are finitely generated abelian groups), and by $\mathrm{D^b_c}(X)$ the full subcategory of complexes in $\mathrm{D_c}(X)$ with bounded cohomology.

\begin{defn}\label{defcanonical}{\rm  A {\em canonical complex} on $X$ is a complex $\Omega\in \mathrm{D^b_c}(X)$ such that, for every $F\in \mathrm{D_c}(X)$, the natural morphism  
 \[F\to \RR\HHom_X^\pun(\RR\HHom_X^\pun(F,\Omega),\Omega)\] is an isomorphism. 
 A finite space is called {\em dualizable} if a canonical complex on $X$ exists. A finite space is called {\em locally dualizable} if $U_x$ is dualizable for every $x\in X$.
}\end{defn}

On a connected finite space, a canonical complex, if it exists,  is unique up to a shift and tensoring by an invertible sheaf (\cite[Thm. 4.8]{ST2}). Every open subset $U\hookrightarrow X$ of a dualizable (resp. locally dualizable) space is dualizable (resp. locally dualizable); in fact,  if $\Omega$ is a canonical complex on $X$, then $\Omega_{\vert U}$ is a canonical complex on $U$ (\cite[Thm. 4.5, (1)]{ST2}). Every closed subset $ j\colon C\hookrightarrow X$ of a dualizable (resp. locally dualizable) space is dualizable (resp. locally dualizable); in fact,  if $\Omega$ is a canonical complex on $X$, then $j^{-1}\RR\HHom_X^\pun(\ZZ_C,\Omega)$ is a canonical complex on $C$ (\cite[Thm. 4.5, (2)]{ST2}). The product of two dualizable (resp. locally dualizable) spaces is dualizable (resp. locally dualizable) (\cite[Prop. 4.24]{ST2}). 

\begin{defn}\label{coh-sphere} {\rm The {\em reduced cohomology} $\RR\Gamma_\text{\rm red}(X,\ZZ) $ is defined as the cone of the natural morphism $\ZZ\to\RR\Gamma(X,\ZZ)$, and $H^i_{\text{\rm red}} (X,\ZZ):= H^i[\RR\Gamma_\text{\rm red}(X,\ZZ)]$ denote the reduced cohomology groups.  Note that  if $X$ is empty, then $H^{-1}_{\text{\rm red}}(X,\ZZ)=\ZZ$ and $H^i_{\text{\rm red}}(X,\ZZ)=0$ for all $i\neq -1$.

We say that $X$ is a {\em cohomological sphere} if $\RR\Gamma_\text{\rm red}(X,\ZZ)=\ZZ[-\dim X]$; that is, if $H^i_{\text{\rm red}} (X,\ZZ)=0$ for all $i\neq\dim X$ and $H^{\dim X}_{\text{\rm red}} (X,\ZZ)=\ZZ$.}
\end{defn}

\begin{ejems} {\rm $\,$\medskip

  (1) The affine and projective spaces  $\AAA^n_{\FF_1}$ and $ \PP^n_{\FF_1}$ are dualizable, with canonical complex given by $\ZZ_{\{\un\}}$ (see \cite{ST}). Consequently, every simplicial or projective simplical complex is dualizable, and every locally simplical space is locally dualizable. 

(2) The spheres $\mathbb S^n$ are dualizable, and $\ZZ$ is a canonical complex. Indeed, by \cite[Proposition 4.4]{ST2}, it suffices to show   that the sheaves $\ZZ_{\{p\}}$ are $\ZZ$-reflexive for every $p\in\Sa^n$; that is, the natural morphism
\[ \ZZ_{\{p\}}\to\RR\HHom^\pun_{\Sa^n} (\RR\HHom^\pun_{\Sa^n} (\ZZ_{\{p\}},\ZZ),\ZZ)\] is an isomorphism. Following the notation of Definition \ref{spheres}, let us assume without loss of generality  that $p=p_i$ (the case $p=q_i$ is analogous) and  denote $q=q_i$. One has (\cite[Proposition 2.5, (1)]{ST2}) $$\RR\HHom^\pun_{\Sa^n} (\ZZ_{\{p\}},\ZZ)=\RR\Gamma_p(U_p,\ZZ)\otimes_\ZZ \ZZ_{C_p} = \ZZ_{C_p}[-\dim U_p]$$ 
where the last equality follows from  $\RR\Gamma_p(U_p,\ZZ)=\ZZ[-\dim U_p]$, since $U_p^*=\Sa^{\dim U_p^*}$. To conclude, it suffices to show that $\RR\HHom^\pun_{\Sa^n} (\ZZ_{C_p},\ZZ)=\ZZ_{\{p\}}[-\dim U_p]$. The complex $\RR\HHom^\pun_{\Sa^n} (\ZZ_{C_p},\ZZ)$ is supported in $C_p$. For every $x<p$, we have
\[\RR\HHom^\pun_{\Sa^n} (\ZZ_{C_p},\ZZ)_x= \RR\Hom^\pun_{U_x} (\ZZ_{U_x\cap C_p},\ZZ) =\RR\Gamma_{U_x\cap C_p}(U_x,\ZZ)=0\] where the final equality vanishes because  $U_x$ and $U_x-(U_x\cap C_p)$ are acyclic and connected (note that $U_x-(U_x\cap C_p)=\Sa^n-C_p=U_q$). Hence, $\RR\HHom^\pun_{\Sa^n} (\ZZ_{C_p},\ZZ)$ is supported strictly at $\{p\}$, and its stalk at $p$ is $\RR\Gamma_p(U_p,\ZZ)=\ZZ[-\dim U_p]$. Thus, $\RR\HHom^\pun_{\Sa^n} (\ZZ_{C_p},\ZZ)=\ZZ_{\{p\}}[-\dim U_p]$.}

\end{ejems}

Locally dualizable spaces are characterized by the following result:
\begin{thm}\cite[Cor. 4.17]{ST2}\label{locallydualizable} A finite space  $X$ is locally dualizable if and only if:
\begin{enumerate}\item $X$ is catenary (Definition \ref{catenary}): every closed interval $[x,x']$ is pure.
\item For every $x<x'$, the open interval $(x,x')$ is a cohomological sphere (Definition \ref{coh-sphere}).
\end{enumerate}
\end{thm}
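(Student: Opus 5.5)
The plan is to reduce everything to a statement about a single local space and then argue by induction on dimension. Since $X$ is locally dualizable exactly when each $U_x$ is dualizable, and since for $x<x'$ the interval $[x,x']$ is a closed subset of the open set $U_x$ while $(x,x')=U_x^*\cap\partial C_{x'}$, both conditions (1) and (2) are local on the $U_x$. So I will prove: \emph{a local space $X$ with closed point $\0$ is dualizable if and only if it is catenary and every open interval $(x,x')$ is a cohomological sphere.} I will apply this to $U_x$, and to the intervals inside it.

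\textbf{Necessity.} Suppose $X$ is local and dualizable. The first step is the claim that a dualizable local space admits $D_X^\0$ as a canonical complex, up to a shift and an invertible sheaf. To see this, take a canonical $\Omega$. Because $\Gamma(U_p,-)$ is exact, the stalks $\Omega_p$ are controlled by the values $\RR\Hom(\ZZ_{\{p\}},\Omega)$. Reflexivity of $\ZZ_{\{\0\}}$ then forces $\RR\Gamma_\0(X,\Omega)$ to be an invertible $\ZZ$-complex. From this one identifies $\RR\HHom(-,\Omega)$ with $\RR\HHom(-,D_X^\0)$ after a shift.

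Next I compute the stalks of $D^\0_X$ by adjunction:
\[(D^\0_X)_p=\RR\Hom(\ZZ_{U_p},D^\0_X)=\RR\Gamma_\0(X,\ZZ_{U_p})^\vee .\]
Using the triangle for $\ZZ_{\{\0\}}$, $\ZZ_X$, $\ZZ_{X^*}$ and excision, this becomes a statement about the reduced cohomology of $(\0,p)$, possibly after dualizing via Proposition \ref{X=Xop}. Reflexivity of each $\ZZ_{\{p\}}$, computed exactly as in the sphere example (via $\RR\HHom(\ZZ_{\{p\}},\Omega)=\RR\Gamma_p(U_p,\Omega)\otimes\ZZ_{C_p}$), forces every local cohomology $\RR\Gamma_p(U_p,\Omega)$ to be a single copy of $\ZZ$ in one degree $d(p)$. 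Translating back gives two conclusions:
\begin{itemize}
\item[(a)] $(\0,p)$ has reduced cohomology $\ZZ$ concentrated in one degree;
\item[(b)] $d(p)-d(q)=1$ whenever $q$ covers $p$.
\end{itemize}
Condition (b) yields purity of each $[\0,p]$, i.e.\ catenarity, and then (a) holds in degree $\dim(\0,p)$, so $(\0,p)$ is a cohomological sphere. The same argument applied to $U_x\subseteq X$ handles the intervals $(x,x')$.

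\textbf{Sufficiency.} I will construct the candidate $\Omega=D^\0_X$ and verify the reflexivity criterion \cite[Prop.\ 4.4]{ST2} on each $\ZZ_{\{p\}}$. The sphere and catenary hypotheses give
\[\RR\Gamma_p(U_p,\ZZ)\cong \ZZ[-\dim U_p],\qquad \RR\Gamma_{U_x\cap C_p}(U_x,\ZZ)=0 \text{ for } x<p,\]
mimicking the computation in the example of the spheres $\Sa^n$. Here the induction on dimension supplies, for the smaller spaces $U_x$ with $x>\0$, that the restriction of the candidate is canonical. What remains is the stalk at $\0$, and that is a global cohomology computation on $X^*$.

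\textbf{Main obstacle.} I expect the hardest step to be this vanishing of $\RR\Gamma_{U_x\cap C_p}(U_x,-)$ with coefficients in the candidate complex rather than in $\ZZ$. This requires a spectral sequence or filtration of $\Omega$ by the supports $\ZZ_{\{q\}}[-\dim U_q]$. Proving that the differentials assemble correctly, so that the result is concentrated in a single degree, is where catenarity is used essentially. I would first attempt it by induction on $\dim[x,p]$, using the Mayer--Vietoris triangle for $C_p=\{p\}\cup\partial C_p$.
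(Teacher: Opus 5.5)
The paper only quotes this result from \cite[Cor.~4.17]{ST2} and gives no proof, so I judge your proposal on its own. The sufficiency half has a genuine gap, and the difficulty is not where you place it.

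Your displayed formulas $\RR\Gamma_p(U_p,\ZZ)\cong\ZZ[-\dim U_p]$ and $\RR\Gamma_{U_x\cap C_p}(U_x,\ZZ)=0$ do not follow from the hypotheses, which concern open intervals and not $U_p^*$. Indeed $\RR\Gamma_p(U_p,\ZZ)=\RR\Gamma_{\text{\rm red}}(U_p^*,\ZZ)[-1]$, and $U_\0^*=\PP^{n-1}_{\FF_1}$ is contractible, so $\RR\Gamma_\0(\A^n_{\FF_1},\ZZ)=0$. These formulas hold on $\Sa^n$ only because $\ZZ$ happens to be canonical there (cf.\ Corollary \ref{Gor}).

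What the hypotheses do give, via \eqref{interval}, is $\RR\Hom^\pun_X(\ZZ_{\{p\}},D^\0_X)=\RR\Gamma_\0(X,\ZZ_{\{p\}})^\vee=\ZZ[\dim[\0,p]]$. Hence $\RR\HHom^\pun_X(\ZZ_{\{p\}},D^\0_X)=\ZZ_{C_p}[\dim[\0,p]]$, and reflexivity of $\ZZ_{\{p\}}$ reduces to $\RR\HHom^\pun_X(\ZZ_{C_p},D^\0_X)$ being supported at $p$. By adjunction its stalk at $x\le p$ is $\RR\Gamma_\0(X,\ZZ_{[x,p]})^\vee$, so no filtration of the candidate complex is needed: everything is $\ZZ$-cohomology. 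The stalk at $\0$ is the trivial part, since $C_p$ and $C_p-\{\0\}$ are contractible; it is not ``what remains''. For $\0<x<p$, the triangle for $X^*\subset X$ gives
\[\RR\Gamma_\0(X,\ZZ_{[x,p]})=\RR\Gamma_{\text{\rm red}}\big((\0,p)-U_x,\ZZ\big)[-2].\]
So the whole content of sufficiency is this: deleting the open star $[x,p)$ of $x$ from the sphere $(\0,p)$ leaves a cohomologically trivial space. This is an Alexander-duality statement, and your plan never proves it.

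Your induction does not supply it. It shows that $U_x$ is dualizable, not that $(D^\0_X)_{\vert U_x}$ is canonical. By Theorem \ref{dualizablelocal} the latter means $(D^\0_X)_{\vert U_x}\simeq D^x_{U_x}[\dim[\0,x]]$, and it amounts to these same vanishings.

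Nor can the vanishing come from catenarity and the sphere property of $(\0,p)$ alone. You must use that the upper intervals $(x,p)$ are spheres, which your sketch never invokes. For example, take $X$ local with generic point $p$ and $(\0,p)=\{a,b,c,e_1,e_2,e_3\}$, ordered by $a,b<e_1,e_2$ and $a,c<e_3$. Then $X$ is catenary and $(\0,p)$ and all $(\0,e_i)$ are cohomological spheres. Yet $(\0,p)-U_c=\{a,b,e_1,e_2\}\simeq\Sa^1$, and what fails is that $(c,p)=\{e_3\}$ is not a $0$-sphere.

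Necessity is sound in outline, but two steps need repair.

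\begin{enumerate}
\item The stalk $(D^\0_X)_p=\RR\Gamma_\0(X,\ZZ_{U_p})^\vee$ is not governed by $(\0,p)$. On $\A^2_{\FF_1}$ one has $D^\0=\ZZ_{\{\un\}}[2]$, which vanishes at $p=(\un,\0)$ although $(\0,p)=\emptyset$. Use the local cohomology $\RR\Hom^\pun_X(\ZZ_{\{p\}},D^\0_X)$ instead.
\item Your condition (b) needs an argument. Reflexivity gives $\RR\HHom^\pun_X(\ZZ_{C_p},\Omega)\simeq\ZZ_{\{p\}}[-d(p)]$. So for $p$ covering $x$ one has $\RR\Gamma_{\{x,p\}}(U_x,\Omega)=0$, and the local cohomology triangle $\RR\Gamma_x(U_x,\Omega)\to\RR\Gamma_{\{x,p\}}(U_x,\Omega)\to\RR\Gamma_p(U_p,\Omega)$ yields $d(x)=d(p)+1$.
\end{enumerate}

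A more economical route: $\RR\HHom^\pun_X(\ZZ_{\{x\}},\Omega)=\ZZ_{C_x}[-d(x)]$, and $\RR\HHom^\pun_X(-,\Omega)$ is fully faithful on $\mathrm{D_c}(X)$. Hence for every $x<p$
\[\RR\Hom^\pun_X(\ZZ_{\{x\}},\ZZ_{\{p\}})\simeq\RR\Hom^\pun_X(\ZZ_{C_p},\ZZ_{C_x})[d(p)-d(x)]=\ZZ[d(p)-d(x)].\]
Together with \eqref{interval}, applied first to covers, this gives catenarity and the sphere property at once, without Theorem \ref{dualizablelocal}.
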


If $X$ is   local, the  unique candidate (up to a shift) for a  canonical complex is the local dualizing complex:

\begin{thm}\cite[Thm. 4.16]{ST2}\label{dualizablelocal} Let $X$ be a local space with unique closed point $\0$. If $\Omega$ is a canonical complex, then $\Omega=D_X^\0[r]$ for a unique $r\in\ZZ$. Thus, $X$ is dualizable if and only if $D_X^\0$ is a canonical complex.
\end{thm}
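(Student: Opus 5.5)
The plan is to pin down the local cohomology of a canonical complex $\Omega$ at $\0$ using reflexivity of the skyscraper $\ZZ_{\{\0\}}$. Then I would use duality to compare $\RR\Hom_X^\pun(-,\Omega)$ with $\RR\Gamma_\0(X,-)^\vee$ and conclude with a Yoneda-type argument.

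\emph{Step 1.} Put $G:=\RR\HHom_X^\pun(\ZZ_{\{\0\}},\Omega)$. For $x\neq\0$ we have $\0\notin U_x$, so $G_x=\RR\Hom_{U_x}^\pun(0,\Omega)=0$. Hence $G=E_{\{\0\}}$, where $E=G_\0=\RR\Gamma_\0(X,\Omega)\in\mathrm{D^b_c}(\ZZ)$. Since $\0$ is closed, $E_{\{\0\}}=i_*E$ with $i\colon\{\0\}\hookrightarrow X$. Reflexivity of $\ZZ_{\{\0\}}$ gives $\ZZ_{\{\0\}}\simeq\RR\HHom_X^\pun(i_*E,\Omega)$. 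Because $X$ is local, $U_\0=X$, so taking global sections is just taking the stalk at $\0$. This yields
\[\ZZ=\RR\Hom_X^\pun(i_*E,\Omega)=\RR\Hom_\ZZ^\pun(E,\RR\Gamma_\0(X,\Omega))=\RR\Hom_\ZZ^\pun(E,E).\]
Over $\ZZ$, $E$ is isomorphic to a finite direct sum of shifted finitely generated abelian groups. The condition $\RR\Hom^\pun_\ZZ(E,E)=\ZZ$ then forces $E$ to have a single summand, which must be $\ZZ$; torsion summands are excluded, since they would contribute torsion to the endomorphisms. Hence $E=\ZZ[s]$ for some $s$.

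\emph{Step 2.} For $F\in\mathrm{D_c}(X)$, reflexivity of $F$ gives
\[\RR\Gamma_\0(X,F)=\RR\Hom_X^\pun(\ZZ_{\{\0\}},F)=\RR\Hom_X^\pun\bigl(\RR\HHom_X^\pun(F,\Omega),\,i_*\ZZ[s]\bigr).\]
By adjunction the right-hand side equals $\bigl(\RR\HHom_X^\pun(F,\Omega)_\0\bigr)^\vee[s]$, and since $X$ is local this is $\RR\Hom_X^\pun(F,\Omega)^\vee[s]$. When $\RR\Hom_X^\pun(F,\Omega)$ has finitely generated cohomology, dualizing once more over $\ZZ$ gives
\[\RR\Hom_X^\pun(F,\Omega[s])\simeq\RR\Gamma_\0(X,F)^\vee=\RR\Hom_X^\pun(F,D_X^\0),\]
naturally in $F$. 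I would apply this to the sheaves $F=\ZZ_{U_x}$. These lie in $\mathrm{D_c}$ and detect stalks: $\RR\Hom^\pun_X(\ZZ_{U_x},K)=K_x$. Their Hom complexes into $\Omega$ are the stalks $\Omega_x$, which are finitely generated.

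\emph{Step 3.} To get an actual isomorphism rather than stalkwise abstract ones, I would build a morphism $\Omega[s]\to D_X^\0$. By the defining adjunction it corresponds to a map $\RR\Gamma_\0(X,\Omega[s])=\ZZ\to\ZZ$, and I take the identity. Then I check that its stalk at every $x$, computed by testing against $\ZZ_{U_x}$, is the isomorphism of Step 2; this requires the Step 2 identification to be compatible with this canonical map. This gives $\Omega\simeq D_X^\0[r]$ with $r=-s$. The integer $r$ is unique because $D_X^\0$ is nonzero and bounded, so no nonzero shift of it is isomorphic to itself. The final equivalence is then immediate: if $X$ is dualizable, $D_X^\0[r]$ is canonical and therefore so is $D_X^\0$; the converse is the definition.

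I expect the main obstacle to be the naturality bookkeeping in Steps 2--3. One must ensure that the abstract isomorphism obtained from double dualization agrees with the map induced by the identity on $\RR\Gamma_\0(X,-)$, and that finiteness holds where the bidual over $\ZZ$ is used.
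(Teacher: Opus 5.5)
The paper gives no proof of this statement; it is quoted from \cite{ST2}, so there is no in-text argument to compare yours against. On its own terms your argument works, apart from one sign slip and one step you left open, which closes easily.

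\textbf{The sign.} With $E=\RR\Gamma_\0(X,\Omega)\simeq\ZZ[s]$, dualizing $\RR\Gamma_\0(X,F)\simeq\RR\Hom_X^\pun(F,\Omega)^\vee[s]$ gives $\RR\Gamma_\0(X,F)^\vee\simeq\RR\Hom_X^\pun(F,\Omega[-s])$, not $\Omega[s]$. To check, take $F=\ZZ_{\{\0\}}$: the left side is $\ZZ$, while $\RR\Hom_X^\pun(\ZZ_{\{\0\}},\Omega[t])=\ZZ[s+t]$.

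Correspondingly, in Step 3 it is $\RR\Gamma_\0(X,\Omega[-s])$ that equals $\ZZ$; as written, $\RR\Gamma_\0(X,\Omega[s])=\ZZ[2s]$. The conclusion is therefore $\Omega\simeq D_X^\0[s]$, i.e.\ $r=s$.

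You should also say why $E\in\mathrm{D^b_c}(\ZZ)$ before splitting it. The triangle $\RR\Gamma_\0(X,\Omega)\to\Omega_\0\to\RR\Gamma(X^*,\Omega)$ does this.

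\textbf{The compatibility you flagged.} Write $\mathbb{D}=\RR\HHom_X^\pun(-,\Omega)$. Your Step 2 isomorphism is just the action of $\mathbb{D}$ on morphisms,
$\RR\Hom_X^\pun(\ZZ_{\{\0\}},F)\to\RR\Hom_X^\pun(\mathbb{D}F,\mathbb{D}\ZZ_{\{\0\}})=\RR\Hom^\pun_\ZZ(\RR\Hom_X^\pun(F,\Omega),E)$,
that is, $\sigma\mapsto(u\mapsto u\circ\sigma)$. It is an isomorphism precisely because $F$ is reflexive.

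Now $E$ is invertible, and $\RR\Hom_X^\pun(\ZZ_{U_x},\Omega)=\Omega_x\in\mathrm{D^b_c}(\ZZ)$ is $\ZZ$-reflexive. Hence the same composition pairing is perfect in the other variable: $u\mapsto\RR\Gamma_\0(u)$ is an isomorphism $\Omega_x\to\RR\Hom^\pun_\ZZ(\RR\Gamma_\0(X,\ZZ_{U_x}),E)$.

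By naturality of the adjunction defining $D_X^\0$, this map (shifted by $[-s]$ and followed by your fixed isomorphism $E[-s]\simeq\ZZ$) is exactly the stalk at $x$ of your morphism $\phi\colon\Omega[-s]\to D_X^\0$. So $\phi$ is an isomorphism.

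You can also bypass stalks entirely. The Step 2 isomorphism $\RR\Hom_X^\pun(F,\Omega[-s])\simeq\RR\Hom_X^\pun(F,D_X^\0)$ is natural in $F$. Both $\Omega[-s]$ and $D_X^\0$ lie in $\mathrm{D^b_c}(X)$, and $\RR\Hom$ between such objects has bounded finitely generated cohomology, so the bidual step is legitimate. The Yoneda argument applied to these two objects then gives $\Omega[-s]\simeq D_X^\0$ directly.
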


\begin{rem}\label{remark}{\rm Let $X$ be a local and dualizable space with closed point $\0$, and let $\Omega$ be a canonical complex. Thus, we have an isomorphism $\psi\colon\Omega\overset\sim\to D_{X}^\0[r]$ for some integer $r$. Then,  
\[ \Hom_{\mathrm{D}(X)} (\Omega ,D_{X}^\0)= \left\{\aligned 0,\, &\text{ if }r\neq 0,\\ \ZZ\cdot\psi, &\text{ if } r=0.\endaligned\right.\] Hence, a morphism $  \Omega \overset\phi\to D_{X}^{\0}$ is an isomorphism if and only if $\RR\Gamma_\0(\phi)\colon \RR\Gamma_\0(X,\Omega) \to \RR\Gamma_\0(X,D_{X}^{\0})$ is an isomorphism.}
\end{rem}

\begin{proof} Since 
\[ \RR\Hom^\pun_X(D_{X}^{\0}[r],D_{X}^{\0})=\RR\Hom_X^\pun(\ZZ_{\{\0\}}, \ZZ_{\{\0\}})^{\vee\vee} [-r]=\ZZ[-r]\] we obtain that $\Hom_{\mathrm{D}(X)}(\Omega ,D_{X}^{\0})= \left\{\aligned 0,\, &\text{ if }r\neq 0,\\ \ZZ\cdot\psi, &\text{ if } r=0\endaligned\right.$. The conclusion follows directly  from the identity $\RR\Gamma_\0(X,D_{X}^{\0})=\ZZ$.
\end{proof}

If $X$ is   irreducible, the  unique candidate (up to a shift) for a  canonical complex is $\ZZ_{\{g\}}$, where $g$ is the generic point:

\begin{thm}\cite[Thm. 4.18]{ST2} \label{dualizableirreducible} Let $X$ be an irreducible space with generic point $g$. Then $X$ is dualizable if and only if it is locally dualizable. If $\Omega$ is a canonical complex on $X$, then $\Omega=\ZZ_{\{g\}}[r]$ for a unique $r\in\ZZ$. Thus, $X$ is dualizable if and only if $\ZZ_{\{g\}}$ is a canonical complex.
\end{thm}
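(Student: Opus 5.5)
We prove the two implications separately, then deduce the uniqueness statement from the uniqueness of canonical complexes on a connected space.

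\emph{Dualizable implies locally dualizable.} This direction is immediate. If $\Omega$ is a canonical complex on $X$, then $\Omega_{\vert U_x}$ is a canonical complex on $U_x$ (\cite[Thm. 4.5, (1)]{ST2}), so every $U_x$ is dualizable.

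\emph{Locally dualizable implies that $\ZZ_{\{g\}}$ is canonical.} Here $g$ is the generic point, which is open and lies in every nonempty open subset. By \cite[Proposition 4.4]{ST2}, as in the sphere example above, it suffices to check that every sheaf $\ZZ_{\{p\}}$ is $\ZZ_{\{g\}}$-reflexive. We compute, as in that example,
\[\RR\HHom^\pun_X(\ZZ_{\{p\}},\ZZ_{\{g\}})=\RR\Gamma_p(U_p,\ZZ_{\{g\}})\otimes_\ZZ\ZZ_{C_p}.\]
Next we compute $\RR\Gamma(U,\ZZ_{\{g\}})$ for an open $U\ni g$ using the exact sequence $0\to\ZZ_{\{g\}}\to\ZZ_U\to\ZZ_{U-\{g\}}\to 0$. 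This gives
\[\RR\Gamma(U,\ZZ_{\{g\}})=\RR\Gamma_{\rm red}(U-\{g\},\ZZ)[-1]\]
(with the convention of Definition \ref{coh-sphere} when $U=\{g\}$).

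For $U=U_p$ with $p<g$, the space $U_p-\{g\}$ has minimum $p$, so it is acyclic and this term vanishes. For $U=U_p^*$ we have $U_p^*-\{g\}=(p,g)$, which is a cohomological sphere of dimension $\dim U_p-2$ by Theorem \ref{locallydualizable} (using purity of $[p,g]$, and $\dim U_p=\dim[p,g]$ since $X$ is irreducible). Hence
\[\RR\Gamma_p(U_p,\ZZ_{\{g\}})=\RR\Gamma(U_p^*,\ZZ_{\{g\}})[-1]=\ZZ[-\dim U_p].\]
This covers $p=g$ as well. Therefore $\RR\HHom^\pun_X(\ZZ_{\{p\}},\ZZ_{\{g\}})=\ZZ_{C_p}[-\dim U_p]$.

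It remains to show $\RR\HHom^\pun_X(\ZZ_{C_p},\ZZ_{\{g\}})=\ZZ_{\{p\}}[-\dim U_p]$, compatibly with the natural map. The stalk at $p$ is $\RR\Gamma_p(U_p,\ZZ_{\{g\}})=\ZZ[-\dim U_p]$, which was computed above. For $x<p$ the stalk is $\RR\Gamma_{C_p\cap U_x}(U_x,\ZZ_{\{g\}})$. Since $\RR\Gamma(U_x,\ZZ_{\{g\}})=0$, this equals
\[\RR\Gamma(U_x-C_p,\ZZ_{\{g\}})[-1]=\RR\Gamma_{\rm red}(U_x-C_p-\{g\},\ZZ)[-2].\]
So we must show that the space $U_x-C_p-\{g\}=(x,g)-[x,p]$ is cohomologically trivial (reduced cohomology zero).

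\emph{Main obstacle: this vanishing.} We plan to use the triangle relating $(x,g)$, its closed subset $(x,g)\cap C_p=(x,p]$, and the open complement. The subspace $(x,p]$ has maximum $p$, so it is acyclic. Thus the cohomology of the complement is governed by $\RR\Gamma_{(x,p]}((x,g),\ZZ)$. By excision this localizes to the open star of $(x,p]$, and we try to identify it, via the sphere property of $(x,g)$ and of the intervals $(q,g)$ for $x<q\le p$, with a shift of $\RR\Gamma_{\rm red}$ of a sphere compatible with the top class of $(x,g)$. An alternative is induction on $\dim[x,p]$, peeling off the points $q\in(x,p]$ one at a time and using that each $\RR\Gamma_q(U_q,\ZZ_{\{g\}})$ is concentrated in degree $\dim U_q$. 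If needed we may instead pass to $X^{\rm op}$ via Proposition \ref{X=Xop} to turn the statement into a dual computation on $C_g$.

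\emph{Uniqueness.} A canonical complex on a connected space is unique up to shift and tensoring by an invertible sheaf $L$ (\cite[Thm. 4.8]{ST2}). Since the restriction maps of $L$ are isomorphisms, $L\otimes\ZZ_{\{g\}}$ is the extension by zero of the stalk $L_g\simeq\ZZ$, so $L\otimes\ZZ_{\{g\}}\simeq\ZZ_{\{g\}}$. Hence $\Omega=\ZZ_{\{g\}}[r]$. The integer $r$ is unique because $\ZZ_{\{g\}}\ne0$ is a single sheaf concentrated in one degree.
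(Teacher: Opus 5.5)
\textbf{There is a genuine gap.} Most of your reduction is correct. The computation $\RR\HHom^\pun_X(\ZZ_{\{p\}},\ZZ_{\{g\}})=\ZZ_{C_p}[-\dim U_p]$ is fine, as is the stalk of $\RR\HHom^\pun_X(\ZZ_{C_p},\ZZ_{\{g\}})$ at $p$. So is the identification of its stalk at $x<p$ with $\RR\Gamma_{\text{\rm red}}((x,g)-(x,p],\ZZ)[-2]$, and so is the uniqueness paragraph.

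The problem is the one statement that carries the content of the theorem: that $(x,g)-(x,p]$ is cohomologically trivial for all $x<p<g$. You never prove it. The ``main obstacle'' paragraph lists three strategies and carries out none of them.

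This step is not routine. It is an Alexander-duality statement: the complement of a closed cell in a homology sphere is acyclic. The two facts you isolate do not imply it, namely that $(x,g)$ is a cohomological sphere and that $(x,p]$ is the closure of a point. For example, attach a whisker $p_0<w>z$ to $\Sa^1$. The result is a cohomological $1$-sphere, $\{z\}$ is the closure of a point, and its complement is still a circle. So the sphere property of \emph{all} the intervals must enter essentially, and none of your sketches shows how. Your ``peeling off'' induction only produces a Cousin-type complex with terms $\bigoplus_{\dim[x,q]=k}\ZZ$. Proving it exact would require identifying its differentials as incidence numbers of a regular cell structure on $(x,p]$, which is a second induction you have not set up. (The paper itself does not reprove this statement; it quotes it from \cite{ST2}.)

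\textbf{How to close the gap.} Use local dualizability directly, rather than only through the combinatorial characterization of Theorem~\ref{locallydualizable}.
\begin{enumerate}
\item Fix $x$. Since $U_x$ is local and dualizable, $D^x_{U_x}$ is canonical by Theorem~\ref{dualizablelocal}.
\item For $q\in U_x$, the restriction $(D^x_{U_x})_{\vert U_q}$ is canonical on the local space $U_q$. Hence it is isomorphic to $D^q_{U_q}[r]$ for some $r$, and so $(D^x_{U_x})_q\simeq (D^q_{U_q})_q[r]=\RR\Gamma_q(U_q,\ZZ)^\vee[r]$.
\item If $q\neq g$, this vanishes. Indeed, $U_q$ has minimum $q$ and $U_q^*$ has maximum $g$, so both are contractible and $\RR\Gamma(U_q,\ZZ)\to\RR\Gamma(U_q^*,\ZZ)$ is an isomorphism.
\item Hence $D^x_{U_x}$ is supported on the open point $g$. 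So $D^x_{U_x}\simeq\ZZ_{\{g\}}\otimes_\ZZ E$ with $E=(D^x_{U_x})_g=\RR\Gamma_x(U_x,\ZZ_{\{g\}})^\vee=\ZZ[\dim U_x]$, using your own computation.
\item Thus $(\ZZ_{\{g\}})_{\vert U_x}$ is canonical for every $x$. Canonicity can be checked on the open cover $\{U_x\}$, as in the proof of Proposition~\ref{proper-canonical}, so $\ZZ_{\{g\}}$ is canonical on $X$.
\end{enumerate}
Your uniqueness argument then finishes the proof. This route needs neither the reflexivity criterion nor the complement computation. The topological input your argument was missing is precisely the canonicity of $D^x_{U_x}$, which local dualizability hands you via Theorem~\ref{dualizablelocal}.
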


A consequence of Theorem \ref{locallydualizable} is the invariance of local dualizability under taking the dual space $(-)^{\text{\rm op}}$:

\begin{cor}\label{loc-dual-op} The following conditions are equivalent:
\begin{enumerate} \item $X$ is locally dualizable.
\item $X^{\rm op}$ is locally dualizable.
\item For every $x\in X$, $C_x$ is dualizable.
\item Every irreducible component of $X$ is dualizable.
\end{enumerate}

\begin{proof} The equivalence of (1) and (2) follows from Theorem \ref{locallydualizable}, taking into account that
\[ [x,y]^{\text{\rm op}}=[\wh y,\wh x]\quad , \quad (x,y)^{\text{\rm op}}=(\wh y,\wh x)\] and that $\RR\Gamma(X,\ZZ)=\RR\Gamma(X^{\text{\rm op}},\ZZ)$ for every finite space $X$ (Proposition \ref{X=Xop}).

For (2)$\Rightarrow $(3), since $(C_x)^{\text{\rm op}}=U_{\wh x}$, we obtain that $C_x$ is locally dualizable, and hence dualizable by Theorem \ref{dualizableirreducible}.

The implication (3)$\Rightarrow $(4)  is immediate.  Finally, for (4)$\Rightarrow $(2), let $X=X_1\cup\dots\cup X_r$ be the decomposition into irreducible components. Since each $X_i$ is dualizable, $(X_i)^{\text{\rm op}}$ is locally dualizable. It follows that  $X^{\text{\rm op}}$ is locally dualizable, as it is covered by the open subsets $(X_i)^{\text{\rm op}}$.
\end{proof}

\end{cor}

\begin{rem}{\rm In the combinatorics literature, a {\em simplicial poset} is a poset with a minimal element such that every closed interval is a Boolean algebra. That is, it is a local space $X$ such that $C_p\simeq \A^{d_p}_{\FF_1}$ for every $p$. By Corollary \ref{loc-dual-op}, every such space is locally dualizable. More generally, every space $X$ whose irreducible components are locally simplicial is locally dualizable.}
\end{rem}

\medskip
\noindent{\S\bf \ Cohen--Macaulayness.}
\medskip

\begin{defn} {\rm Let $X$ be a dualizable local space with unique closed point   $\0$. We say that $X$ is {\em Cohen--Macaulay} if $D_X^{\0}=\omega_X[\dim X]$ for some sheaf $\omega_X$ on $X$, which is called {\em canonical sheaf} of $X$.

In general, a finite space $X$ is Cohen--Macaulay if it is locally dualizable and $U_p$ is Cohen--Macaulay for every $p\in X$. If $X$ is connected, dualizable, and  Cohen--Macaulay, and if $\Omega$ is a canonical complex on $X$, then $\Omega=\omega_X[r]$, where $\omega_X$ is  a canonical sheaf on $X$. This canonical sheaf  is unique up to tensoring by an invertible sheaf, and it satisfies  $(\omega_X)_{\vert U_p}=\omega_{U_p}$ for every $p\in X$.
}\end{defn}

In the simplicial framework, Cohen--Macaulayness is closely related to Cohen--Macaulayness in the sense of Baclawski \cite{B} or Stanley--Reisner \cite{BH}. More precisely (see \cite{ST2}): let $K$ be a simplicial complex and $K^*=K-\{\0\}$ its associated projective simplicial complex. Then $K$ is Cohen--Macaulay if and only if $K^*$ is Cohen--Macaulay in the sense of Baclawsky (or equivalently,  in the Stanley--Reisner sense). Furhtermore, $K^*$ is Cohen--Macaulay if and only if $K^*$ is ACM (almost Cohen--Macaulay) in the sense of Baclawski \cite{B}.

The cohomological characterization of   Cohen-Macaulayness is given by the following result:  

\begin{thm}\cite[Thm. 5.18 and Cor. 5.20]{ST2}\label{CM} A locally dualizable space $X$ is Cohen--Macaulay if and only if, for every $p\in X$, one has:
\[\aligned  H^i_p(U_p,\ZZ) &=0 \text{ for }i\neq\dim U_p,\quad  \text{and}\\   H^{\dim U_p}_p(U_p,\ZZ)&\quad  \text{is torsion free (i.e., a finitely generated free abelian group).}\endaligned\] Moreover, in this case, 
\[(\omega_{U_p})_q= H^{\dim U_q}_q(U_q,\ZZ)^* \] for every $p\in X$ and every $q\in U_p$, where $(-)^*=\Hom_\ZZ(-,\ZZ)$.
\end{thm}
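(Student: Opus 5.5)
We reduce first to the local case. Cohen--Macaulayness is by definition a condition on each $U_p$, and the right-hand condition only involves $U_q$ for $q\geq p$. So it suffices to treat a local, dualizable space $X=U_p$ with closed point $\0$, $n=\dim X$, and show two things. First, $D_X^\0$ is isomorphic to a sheaf placed in degree $-n$ if and only if, for every $q\in X$, $\RR\Gamma_q(U_q,\ZZ)$ is concentrated in degree $\dim U_q$ with torsion-free (finitely generated) cohomology. Second, in that case $(\omega_X)_q=H^{\dim U_q}_q(U_q,\ZZ)^*$. A complex is a sheaf in degree $-n$ exactly when all its stalks have cohomology only in degree $-n$, so everything comes down to computing the stalks $(D_X^\0)_q$.

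To compute the stalks, note that $\Gamma(U_q,-)$ is the stalk at $q$ and that $\Gamma(U_q,G)=\Hom_X(\ZZ_{U_q},G)$. Hence, by Definition \ref{deflocaldualizing},
\[(D_X^\0)_q=\RR\Hom^\pun_X(\ZZ_{U_q},D_X^\0)=\RR\Gamma_\0(X,\ZZ_{U_q})^\vee .\]
The key step is a formula expressing $\RR\Gamma_\0(X,\ZZ_{U_q})$ through the local cohomology of $U_q$ at $q$ and the interval $(\0,q)$. I expect this to be the main obstacle.

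For $q\neq\0$, I would use the triangle $\ZZ_{U_q}\to\ZZ_{X}\to \ZZ_{X-U_q}$ and compare $\RR\Gamma_\0$ of each term. Alternatively, I would write $\RR\Gamma_\0(X,\ZZ_{U_q})=\RR\Hom(\ZZ_{\{\0\}},\ZZ_{U_q})$ and use \cite[Prop. 2.5]{ST2}-type identities to factor it through $C_q$. The expected outcome is
\[\RR\Gamma_\0(X,\ZZ_{U_q})\simeq \RR\Gamma_q(U_q,\ZZ)\otimes\RR\Gamma_{\rm red}((\0,q),\ZZ)[1].\]
Theorem \ref{locallydualizable} says $(\0,q)$ is a cohomological sphere of dimension $\dim[\0,q]-2$, so this becomes $\RR\Gamma_q(U_q,\ZZ)[-\dim[\0,q]]$. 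The case $q=\0$ is trivial, and I would sanity-check the formula on $\A^n_{\FF_1}$.

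Granting the formula, $(D_X^\0)_q\simeq \RR\Gamma_q(U_q,\ZZ)^\vee[\dim[\0,q]]$. Suppose $\RR\Gamma_q(U_q,\ZZ)=L_q[-\dim U_q]$ with $L_q$ free and finitely generated. Then the stalk is $L_q^*$ placed in degree $\dim U_q-\dim[\0,q]$.

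For the converse, a nonzero torsion subgroup in some $H^i$, or cohomology in two distinct degrees, produces cohomology of the dual in a degree different from the expected one or in two degrees, so the stalk is not concentrated in one degree. In particular, degree $-n$ forces $\dim U_q+\dim[\0,q]=n$ for all $q$ with $L_q\neq0$, i.e.\ purity. I also need $L_q\neq 0$, which follows because $D_X^\0$ is a canonical complex (Theorem \ref{dualizablelocal}) and so has nonzero stalks everywhere.

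In the direction from the cohomological conditions to Cohen--Macaulayness, I must show these conditions force purity of $X$. Then catenarity together with purity gives $\dim U_q+\dim[\0,q]=n$, so all stalks sit in degree $-n$ and $D_X^\0=\omega_X[n]$ with $(\omega_X)_q=H^{\dim U_q}_q(U_q,\ZZ)^*$. For purity I would argue at a generic point $g$ of $X$: there $\RR\Gamma_g(U_g,\ZZ)=\ZZ$, and comparing with the stalk at $\0$ (degree $-n$) via the restriction maps of the sheaf, using nonvanishing of $L_q$ along chains, should give $\dim[\0,g]=n$. I would double-check this step, possibly by induction on $\dim X$ applied to the $U_q$ with $q\neq\0$.
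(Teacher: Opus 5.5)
The paper does not prove this statement; it quotes it from \cite[Thm.~5.18, Cor.~5.20]{ST2}, so I judge your argument on its own. Your architecture is sound: reduce to the local case, show $(D_X^\0)_q\simeq\RR\Gamma_q(U_q,\ZZ)^\vee[\dim[\0,q]]$, then count degrees. But the two steps that carry the content are not established.

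\textbf{The stalk formula.} You flag this yourself as the main obstacle, and it is left unproved. The displayed version is also wrong. Since $(\0,q)$ is a sphere of dimension $\dim[\0,q]-2$, your right-hand side equals $\RR\Gamma_q(U_q,\ZZ)[3-\dim[\0,q]]$, not $[-\dim[\0,q]]$. On $\A^1_{\FF_1}$ with $q=\un$, the left side is $\ZZ[-1]$ and yours is $\ZZ[2]$. The correct factor is $\RR\Hom^\pun_X(\ZZ_{\{\0\}},\ZZ_{\{q\}})=\RR\Gamma_{\rm red}((\0,q),\ZZ)[-2]$.

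More seriously, no identity of the form $\RR\Gamma_\0(X,\ZZ_{U_q})\simeq\RR\Gamma_q(U_q,\ZZ)\otimes(\cdots)$ holds on arbitrary finite spaces. For the chain $\0<q<p$ the left side is $\ZZ[-1]$, while $\RR\Gamma_q(U_q,\ZZ)=0$. So manipulating the triangle $\ZZ_{U_q}\to\ZZ_X\to\ZZ_{X-U_q}$ cannot work unless dualizability enters essentially. Here is how to use it:
\begin{itemize}
\item[(i)] $(D_X^\0)_{\vert U_q}$ is a canonical complex on the local space $U_q$, so Theorem \ref{dualizablelocal} gives $(D_X^\0)_{\vert U_q}=D^q_{U_q}[r]$.
\item[(ii)] Apply $\RR\Hom^\pun_{U_q}(\ZZ_{\{q\}},-)$. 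One side gives $\ZZ[r]$. The other gives $\RR\Gamma_\0(X,\ZZ_{\{q\}})^\vee=\ZZ[\dim[\0,q]]$, by \eqref{interval} and Theorem \ref{locallydualizable}.
\item[(iii)] Hence $r=\dim[\0,q]$, and taking stalks at $q$ gives your formula.
\end{itemize}
This is \cite[Thm.~4.16]{ST2}, which the paper also uses in the proof of Proposition \ref{proper-canonical}.

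\textbf{Purity.} Your claim that a canonical complex has nonzero stalks everywhere is false. On $\A^n_{\FF_1}$ one has $D^\0=\ZZ_{\{\un\}}[n]$, and indeed $\RR\Gamma_q(U_q,\ZZ)=0$ for every $q\neq\un$.

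In the ``only if'' direction this does no harm, since points with $L_q=0$ impose nothing. Torsion-freeness does need a precise argument, though. Each $U_q$ is Cohen--Macaulay, so $\RR\Gamma_q(U_q,\ZZ)^\vee=(D^q_{U_q})_q=M[\dim U_q]$. Hence $\RR\Gamma_q(U_q,\ZZ)$ has $M^*$ in degree $\dim U_q$ and $\mathrm{Ext}^1_\ZZ(M,\ZZ)$ in degree $\dim U_q+1$. The latter vanishes because $H^i_q(U_q,\ZZ)=H^{i-1}_{\rm red}(U_q^*,\ZZ)=0$ for $i>\dim U_q$.

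In the ``if'' direction, however, purity is exactly what must be proved, since the stalk at a generic point $g$ is $\ZZ[\dim[\0,g]]$. Your sketch via ``nonvanishing of $L_q$ along chains'' collapses. Induction on the $U_q$ alone is not enough either. Take $X=\{\0,a,b_1,b_2,c\}$ with $\0<a$ and $\0<b_1,b_2<c$. It is locally dualizable, and every $U_q$ with $q\neq\0$ is Cohen--Macaulay, yet $X$ is not pure.

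The missing ingredient is the hypothesis at $\0$ itself, which gives connectedness:
\begin{itemize}
\item[(i)] For $n\ge 2$, $0=H^1_\0(X,\ZZ)=H^0_{\rm red}(X^*,\ZZ)$, so $X^*$ is connected.
\item[(ii)] By induction, $U_q$ is pure for each minimal $q\in X^*$. Catenarity then gives $\dim[\0,g]=1+\dim U_q$ for every generic $g\geq q$.
\item[(iii)] Two minimal points of $X^*$ lying below a common point therefore give the same value.
\item[(iv)] Connectedness of $X^*$ then forces $\dim[\0,g]=n$ for all generic $g$. The cases $n\le 1$ are trivial.
\end{itemize}
With these two repairs the rest of your bookkeeping goes through, including $(\omega_X)_q=H^{\dim U_q}_q(U_q,\ZZ)^*$.
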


\begin{prop}\label{partial X} Let $X$ be an irreducible and dualizable finite space. Then $\partial X$ is dualizable, and $\ZZ$ is a canonical complex on it. In particular, $\partial X$ is Cohen-Macaulay, and $\ZZ$ is a canonical sheaf. 
\end{prop}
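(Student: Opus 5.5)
The plan is to realize $\partial X$ as a closed subset of $X$ and transport the canonical complex of $X$ to it using \cite[Thm. 4.5, (2)]{ST2}. Let $g$ be the generic point of $X$. Since $g$ is maximal, $\{g\}$ is open, so $\partial X=X-\{g\}$ is closed; denote by $j\colon\partial X\hookrightarrow X$ the closed immersion and by $i\colon\{g\}\hookrightarrow X$ the open one. By Theorem \ref{dualizableirreducible}, $\Omega=\ZZ_{\{g\}}$ is a canonical complex on $X$. Hence $j^{-1}\RR\HHom_X^\pun(\ZZ_{\partial X},\ZZ_{\{g\}})$ is a canonical complex on $\partial X$, and it remains to compute it.

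First, consider the exact sequence $0\to\ZZ_{\{g\}}\to\ZZ\to\ZZ_{\partial X}\to 0$ and apply $\RR\HHom_X^\pun(-,\ZZ_{\{g\}})$. This gives a triangle
\[\RR\HHom_X^\pun(\ZZ_{\partial X},\ZZ_{\{g\}})\to \ZZ_{\{g\}}\to \RR\HHom_X^\pun(\ZZ_{\{g\}},\ZZ_{\{g\}}).\]
Next, identify the third term with $\RR i_*\ZZ$. Its stalk at $x$ is $\RR\Gamma(U_x\cap\{g\},\ZZ)=\ZZ$, because $g\in U_x$ for every $x$ (irreducibility). All restriction maps are the identity, so $\RR i_*\ZZ=\ZZ$, the constant sheaf. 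The middle map is then the natural inclusion $\ZZ_{\{g\}}\to\ZZ$. Therefore
\[\RR\HHom_X^\pun(\ZZ_{\partial X},\ZZ_{\{g\}})=\ZZ_{\partial X}[-1],\]
and restricting along $j$ shows that $\ZZ[-1]$ is a canonical complex on $\partial X$. Since shifts of canonical complexes are canonical, $\ZZ$ is canonical and $\partial X$ is dualizable. The only point to check carefully is that the maps in the triangle are the natural ones, so that the cone is really $\ZZ_{\partial X}$.

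For Cohen--Macaulayness, $\partial X$ is dualizable, hence locally dualizable, since open subsets of dualizable spaces are dualizable. For $p\in\partial X$, the restriction $\ZZ$ is canonical on the open set $U_p\cap\partial X$. By Theorem \ref{dualizablelocal}, the local dualizing complex of $U_p\cap\partial X$ is $\ZZ[r]$ for some $r$, so it remains to show $r=\dim(U_p\cap\partial X)$. Since $\RR\Gamma_p(D^p)=\ZZ$, it suffices to compute $\RR\Gamma_p(U_p\cap\partial X,\ZZ)=\RR\Gamma_{\text{\rm red}}((p,g),\ZZ)[-1]$, because the punctured neighbourhood of $p$ in $\partial X$ is exactly the open interval $(p,g)$ of $X$.

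This computation is the main obstacle. By Theorem \ref{locallydualizable}, $X$ is catenary and $(p,g)$ is a cohomological sphere. By catenarity, $\dim(p,g)=\dim[p,g]-2=\dim(U_p\cap\partial X)-1$. Hence the local cohomology is $\ZZ[-\dim(U_p\cap\partial X)]$, which forces $r=\dim(U_p\cap\partial X)$. Thus $D^p=\ZZ[\dim]$, so $\partial X$ is Cohen--Macaulay with canonical sheaf $\ZZ$.
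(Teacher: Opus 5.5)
Your proposal is correct and is essentially the paper's proof. Both transport the canonical complex $\ZZ_{\{g\}}$ to $\partial X$ via $\RR\HHom_X^\pun(\ZZ_{\partial X},-)$ and compute it from the sequence $0\to\ZZ_{\{g\}}\to\ZZ\to\ZZ_{\partial X}\to 0$, obtaining $\ZZ_{\partial X}[-1]$; your additional computation of the shift via $\RR\Gamma_p$ and the sphere property of $(p,g)$ correctly spells out the Cohen--Macaulay step, which the paper treats as immediate.
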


\begin{proof}  Let $g$ be the generic point of $X$. Since  $X$ is dualizable,   $\ZZ_{\{g\}}$ is a canonical complex on $X$. Therefore, $\partial X$ is dualizable with canonical complex given by $i^{-1}\RR\HHom^\pun_X(\ZZ_{\partial X},\ZZ_{\{g\}})$, where $i\colon \partial X\hookrightarrow X$ denotes the closed immersion.  %(see \cite{?}). 

Applying the functor  $\RR\HHom^\pun_X(-,\ZZ_{\{g\}})$ to the short exact sequence
\[ 0\to \ZZ_{\{g\}}\to\ZZ\to \ZZ_{\partial X}\to 0,\] we obtain the exact triangle
\[ \RR\HHom^\pun_X(\ZZ_{\partial X},\ZZ_{\{g\}})\to \ZZ_{\{g\}} \to \ZZ.\] This implies that $\RR\HHom^\pun_X(\ZZ_{\partial X},\ZZ_{\{g\}})=\ZZ_{\partial X}[-1]$, and consequently $i^{-1}\RR\HHom^\pun_X(\ZZ_{\partial X},\ZZ_{\{g\}})=\ZZ[-1]$. Thus,  $\ZZ$ is a canonical complex on $\partial X$. This establishes that $\partial X$ is Cohen-Macaulay and that $\ZZ$ is a canonical sheaf.
\end{proof}

\begin{cor}\label{Gor} Let $X$ be a locally dualizable space. The following conditions are equivalent:
\begin{enumerate} 
\item For every $p\in X$, $U_p^*$ is a cohomological sphere.
\item $X$ is Cohen--Macaulay, and $(\omega_{U_p})_p\simeq \ZZ$ for every $p\in X$.
\item $X$ is Cohen--Macaulay, and $\omega_{U_p}\simeq \ZZ$ for every $p\in X$.
\end{enumerate}
\end{cor}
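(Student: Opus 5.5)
The three conditions of Corollary \ref{Gor} are all local at a point $p$, so I will fix $p\in X$ and compare them on the local dualizable space $U_p$. The tool is the identification of local cohomology at $p$ with reduced cohomology of $U_p^*$. Since $U_p$ is local, it is contractible: it has a minimum, so $\RR\Gamma(U_p,\ZZ)=\ZZ$. The triangle $\RR\Gamma_p(U_p,\ZZ)\to\RR\Gamma(U_p,\ZZ)\to\RR\Gamma(U_p^*,\ZZ)$ then gives
\[ \RR\Gamma_p(U_p,\ZZ)=\RR\Gamma_{\text{\rm red}}(U_p^*,\ZZ)[-1].\]
This holds also when $U_p^*=\emptyset$, where both sides are $\ZZ$ in degree $0$. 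Since $X$ is locally dualizable, it is catenary by Theorem \ref{locallydualizable}, so $\dim U_p^*=\dim U_p-1$. Hence $U_p^*$ is a cohomological sphere if and only if $\RR\Gamma_p(U_p,\ZZ)=\ZZ[-\dim U_p]$.

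For (1)$\Rightarrow$(2), assume each $U_q^*$ is a cohomological sphere. Then $H^i_q(U_q,\ZZ)=0$ for $i\neq\dim U_q$, and $H^{\dim U_q}_q(U_q,\ZZ)=\ZZ$, which is torsion free. Theorem \ref{CM} then shows that $X$ is Cohen--Macaulay, and that $(\omega_{U_p})_q=H^{\dim U_q}_q(U_q,\ZZ)^*=\ZZ$ for all $q\in U_p$. In particular $(\omega_{U_p})_p\simeq\ZZ$.

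For (2)$\Rightarrow$(1), Cohen--Macaulayness and Theorem \ref{CM} give that $H^i_p(U_p,\ZZ)$ vanishes for $i\neq\dim U_p$ and that the top group is free. The same theorem gives $(\omega_{U_p})_p=H^{\dim U_p}_p(U_p,\ZZ)^*$. A finitely generated free group whose dual is $\ZZ$ is itself $\ZZ$, so $\RR\Gamma_p(U_p,\ZZ)=\ZZ[-\dim U_p]$, and the first paragraph shows $U_p^*$ is a cohomological sphere. The implication (3)$\Rightarrow$(2) is trivial.

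The remaining implication (2)$\Rightarrow$(3), or equivalently (1)$\Rightarrow$(3), asks that $\omega_{U_p}$ be isomorphic to the constant sheaf $\ZZ$, not merely have stalks $\ZZ$. This is the main obstacle. Assuming (1), all stalks of $\omega_{U_p}$ are $\ZZ$, so it remains to show that every restriction map $(\omega_{U_p})_q\to(\omega_{U_p})_{q'}$ with $q\le q'$ is an isomorphism. I would first try to compute these maps directly: the restriction should be dual to the natural map $H^{\dim U_{q'}}_{q'}(U_{q'},\ZZ)\to H^{\dim U_q}_q(U_q,\ZZ)$. Because the interval $(q,q')$ is a cohomological sphere (Theorem \ref{locallydualizable}), a Mayer--Vietoris or link computation should show this map is an isomorphism $\ZZ\to\ZZ$.

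If that computation becomes messy, I would reduce to covering edges $q<q'$, where $(q,q')=\emptyset$, and argue via the local–global comparison. Alternatively, I would use that $\omega_{U_p}$ is a rank-one sheaf with all stalks $\ZZ$ on the contractible space $U_p$. Such a sheaf is trivial exactly when its restriction maps are nonzero, and they are nonzero because $\omega_{U_p}$ is a canonical sheaf, so that $\RR\HHom(\omega,\omega)=\ZZ$. I would also check that a canonical sheaf cannot have a zero restriction map, since that would force $\omega$ to split along $U_q\setminus U_{q'}$, contradicting reflexivity of $\ZZ$.
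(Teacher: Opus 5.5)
Your treatment of (1)$\Leftrightarrow$(2) and (3)$\Rightarrow$(2) is correct and matches the paper: you use $\RR\Gamma_p(U_p,\ZZ)=\RR\Gamma_{\text{\rm red}}(U_p^*,\ZZ)[-1]$ together with Theorem \ref{CM}. The gap is (2)$\Rightarrow$(3), the only substantive implication, which you do not prove. Your reduction is sound: a sheaf on the local space $U_p$ whose restriction maps are all isomorphisms is constant. But the three strategies for showing that the restriction maps of $\omega_{U_p}$ are isomorphisms are only announced.

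In (a), the map $H^{\dim U_{q'}}_{q'}(U_{q'},\ZZ)\to H^{\dim U_q}_q(U_q,\ZZ)$ goes between groups in different degrees, since $\dim U_q=\dim U_{q'}+\dim[q,q']$. It would have to be built from $(D^q_{U_q})_{\vert U_{q'}}\simeq D^{q'}_{U_{q'}}[\dim[q,q']]$, and proving it bijective is the whole content of the implication; ``should show'' leaves this open. Strategy (b) only reduces to covering relations without saying how to handle them.

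Strategy (c) rests on a false criterion. On a local space, a sheaf with all stalks $\ZZ$ and all restriction maps nonzero need not be constant. On $\A^1_{\FF_1}=\{\0<\un\}$, the sheaf $F$ with $F_\0=F_\un=\ZZ$ and restriction map multiplication by $2$ is a counterexample. You need the restriction maps to be $\pm1$, and $\HHom(\omega,\omega)=\ZZ$ cannot detect this, because $\HHom(F,F)=\ZZ$ for this $F$ too. Only higher Ext would (here $\mathrm{Ext}^1(F,F)=\ZZ/2$), and you give no such argument.

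The missing idea is global, and the paper supplies it by induction on $\dim X$ with $X$ local.
\begin{itemize}
\item If $\dim X=1$, then $X^*$ is a $0$-dimensional cohomological sphere, i.e.\ two points, so $X\simeq\partial\A^2_{\FF_1}$ and Proposition \ref{partial X} applies.
\item If $\dim X\ge 2$, let $j\colon X^*\hookrightarrow X$. Cohen--Macaulayness gives $H^0_\0(\omega_X)=H^1_\0(\omega_X)=0$, hence $\omega_X\simeq j_*\omega_{X^*}$.
\item Since (2) already yields (1), $X^*$ is a connected cohomological sphere. So $H^0_\0(\ZZ)=H^1_\0(\ZZ)=0$ and $\ZZ\simeq j_*\ZZ$. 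It therefore suffices to show $\omega_{X^*}\simeq\ZZ$.
\item Induction applied to each $U_q$, $q\in X^*$, shows $\omega_{X^*}$ is invertible. An invertible sheaf on the connected space $X^*$ is trivial as soon as it has a nonzero global section.
\item That section comes from duality. The complex $\omega_{X^*}[\dim X^*]=(D^\0_X)_{\vert X^*}[-1]=D_{X^*}$ is the global dualizing complex of $X^*$. Hence $H^0(X^*,\omega_{X^*})=H^{\dim X^*}(X^*,\ZZ)^*=\ZZ$, because $X^*$ is a cohomological sphere.
\end{itemize}
This is where the hypothesis enters globally, not just through the stalks being $\ZZ$. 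A purely stalk-by-stalk comparison of restriction maps would have to recover this information, and your proposal does not.
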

\begin{proof} Since all conditions are local, we may assume without loss of generality that $X$ is a local and dualizable space.  For each $p\in X$, one has
\[ H^i_p(U_p,\ZZ)=H^{i-1}_{\text{\rm red}}(U_p^*,\ZZ),\text{ for all }i.\]
This follows directly from the exact triangle of local cohomology; see \cite[Lemma 1.6]{ST2} for explicit  details. 

The equivalence of (1) and (2) follows immediately  from Theorem \ref{CM}. The implication (3) implies (2) is trivial. Finally, let us show that (2) implies (3). Assume that $X$ is local, Cohen--Macaulay, and $(\omega_X)_p\simeq \ZZ$ for every $p\in X$; we aim to conclude that $ \omega_X \simeq \ZZ$. 

If $\dim X=0$, the result immediate. If $\dim X=1$, let $\0$ be the closed point. Since $X^*$ is a $0$-dimensional cohomological sphere, it consists of exactly two points,  $X^*=\{ q_1,g_2\}$ (i.e., $X$ has two generic points). This implies that $X\simeq \partial\A^2_{\FF_1}$ and the claim follows from Proposition \ref{partial X}. 

Assume now that  $\dim X>1$,  and let $j\colon X^*\hookrightarrow X$ be the open immersion. Then $\omega_{X^*}:=(\omega_X)_{\vert X^*}$ is a canonical sheaf on $X^*$. The natural morphism $\omega_X\to j_*\omega_{X^*}$ is an isomorphism because $H^0_{\0}(\omega_X)=H^1_{\0}(\omega_X)=0$ (see \cite[Thm. 5.18]{ST2}). By the induction hypothesis, $\omega_{X^*}$ is an invertible sheaf. To show that  $\omega_{X^*}\simeq \ZZ$, it suffices to verify that $H^0(X^*,\omega_{X^*})\neq 0$. Since $\omega_{X^*}$ is a dualizing sheaf (see \cite[Thm. 5.24, (B)]{ST2}), we have
\[H^0(X^*,\omega_{X^*})=H^{\dim X^*}(X^*,\ZZ)^*=\ZZ\] where the last isomorphism holds because $X^*$ is a cohomological sphere. 

Furthermore, the natural morphism $\ZZ\to j_*\ZZ$ is an isomorphism because $H^0_{\0}(\ZZ)=H^1_{\0}(\ZZ)=0$. Combining these facts, we obtain  $\omega_X\simeq\ZZ$.

\end{proof}

\bigskip
\noindent{\S\bf \ Cohomologically proper maps.} Cohomologically proper maps were introduced in \cite{Sanchoetal} in order to characterize those maps satisfying the projection formula, base change, local form of duality, etc. Very briefly, a cohomologically proper map $f\colon X\to Y$ between finite spaces is a map such that $\RR f_*$ enjoys the same cohomological properties as the direct image functor  when $f$ is a proper map between locally compact and Haussdorff spaces. We summarize here  the results concerning these maps that  will be utilized throughout this work  (see \cite{Sanchoetal} for their proofs). The only novel result provided here  is Proposition \ref{cohpropsup}.

\begin{defn}\label{c-trivial}{\rm We say that a finite space $X$ is {\em cohomologically trivial} if $\RR\Gamma_{\text{\rm red}}(X,\ZZ)=0$. In other words, $H^i(X,\ZZ)=0$ for all $i\neq 0$ and $H^0(X,\ZZ)=\ZZ$ (which implies that $X$ is non-empty and connected).
}
\end{defn}

\begin{defn}\label{c-proper}{\rm A continuous map $f\colon X\to Y$ is called {\em cohomologically proper} ({\em c-proper}, for short) if, for every $p\in X$ the map $f_{\vert C_p}\colon C_p\to C_{f(p)}$ has cohomologically trivial fibres.}
\end{defn}

\begin{rem}{\rm In \cite{Sanchoetal}, a cohomologically proper map is defined as a {\em closed} map $f\colon X\to Y$ satisfying the condition of Definition \ref{c-proper}. However, closedness follows from this condition:  the maps $f_{\vert C_p}\colon C_p\to C_{f(p)}$ are surjective because the fibres are not empty, and hence $f$ is automatically closed.}
\end{rem}

Every closed immersion is c-proper. Every c-proper map is a closed (and universally closed) map. If $f\colon X\to Y$ is c-proper, then, for every base change $\overline Y\to Y$, the induced map $X\times_Y\overline Y\to\overline Y$ is also c-proper. The composition of c-proper maps is c-proper. For every space $X$, the projection to a point, $X\to\{*\}$, is c-proper.

\begin{prop}\label{cohprop-prop} Let $f\colon X\to Y$ be a continuous map. The following conditions are equivalent:
\begin{enumerate} \item $f$ is c-proper.
\item There exists an open covering $Y=V_1\cup\dots\cup V_n$ such that $  f^{-1}(V_i)\to V_i$ is c-proper for every $i=1,\dots,n$.
\item For every $y\in Y$, the map $  f^{-1}(U_y)\to U_y$ is c-proper.
\item There exists a closed covering $Y=C_1\cup\dots\cup C_n$ such that $  f^{-1}(C_i)\to C_i$ is c-proper for every $i=1,\dots,n$.
\item For every $y\in Y$, the map $  f^{-1}(C_y)\to C_y$ is c-proper.
\item There exists a closed covering $X=X_1\cup\dots \cup X_m$ such that $f_{\vert X_i}\colon X_i\to Y$ is c-proper for every $i=1,\dots ,m$.
\item $f_{\vert C_p}\colon C_p\to C_{f(p)}$ is c-proper for every $p\in X$.
\item $f_{\vert C_g}\colon C_g\to C_{f(g)}$ is c-proper for every generic point $g\in X$.
\end{enumerate}
\end{prop}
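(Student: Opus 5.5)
The plan is to reduce everything to condition (7), which is the definition of c-properness rewritten, and to exploit the fact that this condition is purely about the fibres of the maps $f_{\vert C_p}\colon C_p\to C_{f(p)}$. For $p\in X$ and $y\le f(p)$, write $F_{p,y}$ for the fibre of $f_{\vert C_p}$ over $y$, namely $C_p\cap f^{-1}(y)$. Then $f$ is c-proper if and only if every $F_{p,y}$ is cohomologically trivial. The key observation is that $F_{p,y}$ depends only on $C_p$ and $f$. It does not change if we replace $X$ by any subspace containing $C_p$, or $Y$ by any subspace containing $C_{f(p)}$ (the latter together with the corresponding preimage).

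First, (1)$\Leftrightarrow$(7) is immediate: $C_p$ is closed in $C_p$, so the fibres of $(f_{\vert C_p})_{\vert C_q}$ for $q\le p$ are the fibres $F_{q,y}$. Next, (7)$\Leftrightarrow$(8). Suppose (8) holds and let $p\in X$. Choose a generic point $g\ge p$. Then $C_p\subseteq C_g$, and since c-properness of $f_{\vert C_g}$ means all its $C_q$-restrictions ($q\le g$) have trivial fibres, the case $q=p$ gives exactly (7) at $p$. The converse is trivial.

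For the covering conditions, I will use the same principle. For (6), each $C_p$ lies in some $X_i$ because the $X_i$ are closed and $p\in X_i$ for some $i$. The fibres of $f_{\vert C_p}$ are therefore computed inside $X_i$, so (6)$\Rightarrow$(7). The implication (1)$\Rightarrow$(6) is trivial with $m=1$.

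For (2) and (3), take $p\in X$ with $f(p)\in V_i$. One should not expect $C_p\subseteq f^{-1}(V_i)$, so the argument needs more care here. Every $y\le f(p)$ that is not in $V_i$ causes trouble. My approach is to avoid this by choosing the open set at the point $y$ itself. The fibre $F_{p,y}$ is $\{x\le p: f(x)=y\}$. This set equals the fibre over $y$ of $f_{\vert C_p\cap f^{-1}(U_y)}\colon C_p\cap f^{-1}(U_y)\to C_{f(p)}\cap U_y$. Moreover, $C_p\cap f^{-1}(U_y)$ is the closure of $p$ inside the open subspace $f^{-1}(U_y)$, and $C_{f(p)}\cap U_y$ is the closure of $f(p)$ in $U_y$. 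So if $f^{-1}(U_y)\to U_y$ is c-proper, then $F_{p,y}$ is trivial. This proves (3)$\Rightarrow$(1). For (2)$\Rightarrow$(3), given $y$, choose $V_i\ni y$; then $U_y\subseteq V_i$. The closure of $p$ in $f^{-1}(U_y)$ equals its closure in $f^{-1}(V_i)$ intersected with $f^{-1}(U_y)$, so the fibres over points of $U_y$ agree. Finally, (1)$\Rightarrow$(2) is trivial.

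For (4) and (5), closed subsets of $Y$ are down-closed. If $f(p)\in C_i$, then $C_{f(p)}\subseteq C_i$, hence $C_p\subseteq f^{-1}(C_i)$, and the fibres of $f_{\vert C_p}$ are unchanged. This gives (4)$\Rightarrow$(7); (5) is the special case of the covering by the $C_y$, and (1)$\Rightarrow$(4),(5) are trivial because $f^{-1}(C_i)\to C_i$ is the restriction over a closed set, which is a base change. The main obstacle I anticipate is the open-covering step, specifically checking carefully that closures in the open subspaces $f^{-1}(U_y)$ give exactly the fibres $F_{p,y}$. The rest is bookkeeping with the order description of closed and open sets.
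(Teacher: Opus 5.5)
Your proof is correct. The paper gives no argument for this proposition: it is quoted among the preliminary facts on c-proper maps, and the proof is deferred to the earlier work where c-properness was introduced. So there is no in-paper route to compare with.

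What you supply is a self-contained verification directly from Definition~\ref{c-proper}. You reduce everything to the cohomological triviality of the sets $C_p\cap f^{-1}(y)$ for $y\le f(p)$. The rest uses only two order-theoretic facts: closures inside a closed subspace are the closures in $X$ (resp.\ $Y$), and the closure of $p$ inside an open subspace $W$ is $C_p\cap W$.

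The only delicate point is the open coverings in (2) and (3). You handle it correctly by passing to $f^{-1}(U_y)\to U_y$ for the very point $y$ whose fibre is being tested, so that $C_p\cap f^{-1}(y)$ is unaffected by the truncation. In particular, you never need the cohomological characterizations of Theorem~\ref{cohprop}.

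One slip in wording: in (1)$\Leftrightarrow$(7) you mean that $C_q$ is closed in $C_p$, so the closure of $q\le p$ inside $C_p$ is $C_q$. As written, you say that $C_p$ is closed in $C_p$.
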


\begin{thm}\label{cohprop} Let $f\colon X\to Y$ be a continuous map. The following conditions are equivalent:
\begin{enumerate} \item $f$ is c-proper.
\item {\rm (Base change)}. For every cartesian diagram
\[
\xymatrix{ \overline X\ar[d]_{\bar f}\ar[r]^{\bar g} & X \ar[d]^{f} \\   \overline Y\ar[r]^{g}   & \,   Y    
 } 
\] the natural morphism  $g^{-1}\RR f_* F\to  \RR {\bar f}_* {\bar g}^{-1} F$ is an isomorphism  for every $F\in \mathrm{D}(X)$.
\item {\rm ($f^\times$ is local on $Y$)}. For every open subset $V\subseteq Y$ and every $F\in \mathrm{D}(Y)$, the natural morphism $$(f^\times F)_{\vert f^{-1}(V)} \to (f_{\vert f^{-1}(V)})^\times F_{\vert V}  $$ is an isomorphism.
\item{\rm (Local form of duality)}. For every $F\in \mathrm{D}(X)$ and every $K\in \mathrm{D}(Y)$, the natural morphism
\[\RR f_*\RR\HHom_X^\pun(F,f^\times K)\to\RR\HHom_Y^\pun(\RR f_*F,K)\] is an isomorphism.
\end{enumerate}
\end{thm}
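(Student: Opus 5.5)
The plan is to reduce every condition to a statement about stalks and then test it on a convenient family of generators of $\mathrm{D}(X)$. The generators I would use are the sheaves $\ZZ_{C_p}$, $p\in X$. Since $\ZZ_{C_p}=(i_p)_*\ZZ$ for $i_p\colon\{p\}\hookrightarrow X$, every sheaf on a finite space has a finite resolution by finite sums of such sheaves (a Godement-type resolution). So a natural transformation between triangulated functors that commute with finite sums is an isomorphism on $\mathrm{D}^b(X)$ as soon as it is one on each $\ZZ_{C_p}$. Passing to unbounded complexes needs an extra way-out argument, which is routine because $\dim X<\infty$.

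\emph{(1)$\Leftrightarrow$(2).} I first treat the special base changes $g\colon \{y\}\hookrightarrow Y$. In that case the base change morphism is the restriction
\[\RR\Gamma(f^{-1}(U_y),F)\to\RR\Gamma(f^{-1}(y),F_{\vert f^{-1}(y)}).\]
For $F=\ZZ_{C_p}$ the two sides are $\RR\Gamma(C_p\cap f^{-1}(U_y),\ZZ)$ and $\RR\Gamma(C_p\cap f^{-1}(y),\ZZ)$.

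\begin{itemize}
\item If $y\not\leq f(p)$, both sets are empty.
\item If $y\leq f(p)$, the set $C_p\cap f^{-1}(U_y)$ contains $p$ as a maximum, so it is cohomologically trivial. The morphism is then an isomorphism exactly when the fibre of $f_{\vert C_p}\colon C_p\to C_{f(p)}$ over $y$ is cohomologically trivial; in particular that fibre must be non-empty.
\end{itemize}

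Hence point base change for all $F$ is equivalent to c-properness.

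For a general cartesian square, I would compute the stalk at $\bar y\in\overline Y$. The stalk of $g^{-1}\RR f_*F$ is $\RR\Gamma(f^{-1}(U_{g(\bar y)}),F)$, and the stalk of $\RR\bar f_*\bar g^{-1}F$ is $\RR\Gamma(\bar f^{-1}(U_{\bar y}),\bar g^{-1}F)$. Both sides map compatibly to the common fibre $f^{-1}(g(\bar y))=\bar f^{-1}(\bar y)$. The first map is an isomorphism by the point case applied to $f$. The second is an isomorphism by the point case applied to $\bar f$, which is c-proper because c-properness is stable under base change. This gives (1)$\Rightarrow$(2); the converse is the special case already treated.

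\emph{(3) and (4).} For (3)$\Leftrightarrow$(1), let $j\colon V\hookrightarrow Y$ and $j'\colon f^{-1}(V)\hookrightarrow X$ be the open immersions. By adjunction, (3) is equivalent to the natural isomorphism $\RR f_*\, j'_!\cong j_!\,\RR f_{V*}$ for every open $V$. Since extension by zero sits in a triangle with restriction to the closed complement, this is in turn equivalent to base change for the closed immersions $Y-V\hookrightarrow Y$. Stalkwise, that is exactly the point base change analysed above, so (3)$\Leftrightarrow$(1).

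For (4)$\Rightarrow$(3), I would take $\RR\Gamma(V,-)$ of the local duality isomorphism and use $\RR\Gamma(V,\RR f_*-)=\RR\Gamma(f^{-1}(V),-)$. This identifies $\RR\Hom$ into $(f^\times K)_{\vert f^{-1}(V)}$ with $\RR\Hom$ into $(f_{\vert f^{-1}(V)})^\times K_{\vert V}$, for every test object on $f^{-1}(V)$ extended by zero, and Yoneda then gives (3). For (3)$\Rightarrow$(4), the stalk of the local duality morphism at $y$ is its sections over $U_y$. Via (3) and the trivial base change of $\RR f_*$ along the open immersion $U_y\hookrightarrow Y$, those sections become the adjunction isomorphism for $f^{-1}(U_y)\to U_y$.

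The step I expect to be the main obstacle is the identification of (3) with base change along closed immersions. One has to check that the relevant natural maps (the adjunction unit and counit versus the base change map) really correspond, rather than merely having isomorphic sources and targets. I would first make this explicit for $V=U_y^*$ inside $U_y$ and then globalize.
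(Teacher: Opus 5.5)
The paper does not prove this theorem. It is quoted, along with the other background on c-proper maps, from \cite{Sanchoetal}, so there is no in-text argument to compare yours with.

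Taken on its own terms, your outline is a correct proof. The following steps all go through:
\begin{enumerate}
\item The point computation with the test sheaves $\ZZ_{C_p}$: $C_p\cap f^{-1}(U_y)$ is empty or has maximum $p$, so the restriction map is an isomorphism exactly when $C_p\cap f^{-1}(y)$ is cohomologically trivial.
\item The factorization of the general stalk map through the common fibre $f^{-1}(g(\bar y))=\bar f^{-1}(\bar y)$.
\item The translation of (3) into its mate $j_!\RR f_{V*}\to\RR f_*j'_!$. Via the triangle $j'_!j'^{-1}\to 1\to i'_*i'^{-1}$, this becomes base change along $Z=Y-V$.
\item The Yoneda arguments for (3)$\Leftrightarrow$(4).
\end{enumerate}
The compatibility of natural maps that you single out as the main obstacle is a routine mate computation.

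Three points should be tightened; none of them changes the strategy.

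\emph{Resolution terms.} The terms of the canonical finite resolution of a sheaf $F$ are $\bigoplus_p (F_p)_{C_p}$ with arbitrary stalks $F_p$, not finite sums of copies of $\ZZ_{C_p}$. So ``commuting with finite sums'' is not enough. You can either use that both functors commute with arbitrary direct sums (true on finite spaces) together with a free resolution of each $F_p$. Or you can redo the point computation directly for $A_{C_p}$, using $\RR\Gamma(S,A)=\RR\Gamma(S,\ZZ)\overset{\LL}\otimes A$.

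\emph{Stalks of closed base change.} The stalk at $z$ of base change along a closed $i\colon Z\hookrightarrow Y$ is the restriction $\RR\Gamma(f^{-1}(U_z),G)\to\RR\Gamma(f^{-1}(U_z\cap Z),G)$. This is a point base change only when $U_z\cap Z=\{z\}$.
\begin{enumerate}
\item For (3)$\Rightarrow$(1), take $V=U_z^*$, which is open, so that $Z=Y-U_z^*$ and $U_z\cap Z=\{z\}$. This is essentially the case you propose to treat first.
\item For (1)$\Rightarrow$(3), do not argue stalkwise. Just apply the already-proved (1)$\Rightarrow$(2) to the cartesian square over $Z$.
\end{enumerate}

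\emph{Stability of c-properness.} You use stability of c-properness under base change for $\bar f$; it is immediate, but should be stated. The fibre over $\bar y'\le\bar y$ of $\bar f$ restricted to $C_{(x,\bar y)}$ is $\{(x',\bar y')\colon x'\le x,\ f(x')=g(\bar y')\}\cong C_x\cap f^{-1}(g(\bar y'))$. Since $g(\bar y')\le f(x)$, this is a fibre of $f_{\vert C_x}$ and hence cohomologically trivial.
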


The following result will be a key ingredient for our study of proper maps.

\begin{prop}\label{cohpropsup} Let $f\colon X\to Y$ be a c-proper map. For every $p\in X$, let $G_p$ denote   the closure of $\{p\}$ in $f^{-1}(f(p))$ and set $\partial G_p=G_p-\{p\}$. One has a natural isomorphism:
\[\RR f_*\ZZ_{\{p\}} = \ZZ_{\{f(p)\}}\otimes_\ZZ  \RR\Gamma_{\text{\rm red}}(\partial G_p ,\ZZ)[-1].\] 
In particular, $\RR f_*\ZZ_{\{p\}} = \ZZ_{\{f(p)\}}$ if $p$ is $f$-closed (i.e., it is a closed point of the fibre $f^{-1}(f(p))$).
\end{prop}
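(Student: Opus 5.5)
We work stalkwise. For $y\in Y$, base change (Theorem \ref{cohprop}, (2)) applied to the inclusion $U_y\hookrightarrow Y$, together with $\Gamma(U_y,-)$ being exact, gives
\[(\RR f_*\ZZ_{\{p\}})_y=\RR\Gamma(f^{-1}(U_y),\ZZ_{\{p\}}).\]
If $p\notin f^{-1}(U_y)$, i.e. $f(p)\not\geq y$, this vanishes. We must show it also vanishes when $y<f(p)$. At $y=f(p)$ it should equal $\RR\Gamma_{\text{\rm red}}(\partial G_p,\ZZ)[-1]$, canonically. Granting these, $\RR f_*\ZZ_{\{p\}}$ is supported at the single point $f(p)$, hence is of the form $\ZZ_{\{f(p)\}}\otimes_\ZZ E$ with $E$ its stalk. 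This gives the claimed isomorphism. Naturality comes from the adjunction maps used below.

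To compute $\RR\Gamma(V,\ZZ_{\{p\}})$ on an open set $V\ni p$, we use $\ZZ_{\{p\}}=\ZZ_{C_p\cap V}$ restricted to the open subset $\{p\}$ of $C_p\cap V$. The exact sequence $0\to\ZZ_{\{p\}}\to\ZZ_{C_p\cap V}\to\ZZ_{\partial C_p\cap V}\to 0$ then gives
\[\RR\Gamma(V,\ZZ_{\{p\}})=\mathrm{cone}\bigl(\RR\Gamma(C_p\cap V,\ZZ)\to\RR\Gamma(\partial C_p\cap V,\ZZ)\bigr)[-1].\]
Take $V=f^{-1}(U_y)$ and apply c-properness of $f_{\vert C_p}\colon C_p\to C_{f(p)}$, in its base change form for $U_y\cap C_{f(p)}$. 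This gives $\RR\Gamma(C_p\cap f^{-1}(U_y),\ZZ)=\RR\Gamma([y,f(p)],\ZZ)$, which equals $\ZZ$ because $[y,f(p)]$ has a minimum. The main obstacle is $\partial C_p\cap f^{-1}(U_y)$.

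We split it into two pieces. The first is $A=\partial C_p\cap f^{-1}(U_y)\cap f^{-1}(\partial C_{f(p)})$, which is closed in $\partial C_p\cap f^{-1}(U_y)$. The second is its complement, the open piece $\partial G_p$ (points $q<p$ with $f(q)=f(p)$). For $A$ we use c-properness on $C_p$ restricted over $[y,f(p))$: by base change its cohomology is $\RR\Gamma([y,f(p))\cap\ldots,\ZZ)=\RR\Gamma([y,f(p)),\ZZ)$. When $y<f(p)$ this is $\ZZ$, since $[y,f(p))$ has minimum $y$; when $y=f(p)$, $A=\emptyset$. Then a Mayer--Vietoris (or open/closed triangle) argument on $\partial C_p\cap f^{-1}(U_y)=A\sqcup\partial G_p$ compares it with the whole $C_p\cap f^{-1}(U_y)$.

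A cleaner route for the case $y<f(p)$ is the following. There, $\RR\Gamma(f^{-1}(U_y),\ZZ_{\{p\}})$ is the compactly supported type cohomology of $\{p\}$ relative to the fibration, and we can instead compute $\RR\Gamma_{\{p\}}$-style via closed sets. Namely, $\ZZ_{\{p\}}$ on $C_p\cap f^{-1}(U_y)$ is the kernel of $\ZZ\to\ZZ_{W}$ with $W=\partial C_p\cap f^{-1}(U_y)$. By the c-proper base change $W$ equals $f_{\vert C_p}^{-1}$ of $[y,f(p))$ union $\partial G_p$. One then checks that $W$ is cohomologically trivial. Indeed, $W$ contains the closed c-trivial piece $A$ and deformation retracts onto it, because every $q\in\partial G_p$ lies over $f(p)>y$ and the map $C_q\to C_{f(p)}$ has trivial fibres. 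I expect to verify this by the open/closed triangle $\RR\Gamma_{\!c}(\partial G_p)\to\RR\Gamma(W)\to\RR\Gamma(A)$ and by showing $\RR\Gamma(W)\to\RR\Gamma(A)$ is an isomorphism, using base change for $f_{\vert \partial C_p\cap\ldots}$ over $C_{f(p)}$. This is the step to work hardest on.

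At $y=f(p)$ the computation is direct. We have $C_p\cap f^{-1}(U_{f(p)})=G_p$, which is c-trivial (it has maximum $p$), and $\partial C_p\cap f^{-1}(U_{f(p)})=\partial G_p$. So the cone formula gives $\mathrm{cone}(\ZZ\to\RR\Gamma(\partial G_p,\ZZ))[-1]=\RR\Gamma_{\text{\rm red}}(\partial G_p,\ZZ)[-1]$. If $p$ is $f$-closed, then $\partial G_p=\emptyset$, whose reduced cohomology is $\ZZ[1]$, and we obtain $\RR f_*\ZZ_{\{p\}}=\ZZ_{\{f(p)\}}$.
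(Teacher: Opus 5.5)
Your computation of the stalk at $f(p)$ is correct. The part of the statement that actually uses c-properness is not proved, however: the vanishing $(\RR f_*\ZZ_{\{p\}})_y=0$ for $y<f(p)$. You flag this yourself as the step still to be worked out. The only justification you offer is that $W=\partial C_p\cap f^{-1}(U_y)$ \emph{deformation retracts} onto $A$ because each $q\in\partial G_p$ lies over $f(p)>y$, and this produces neither a retraction nor a cohomology computation.

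The difficulty comes from your first step. The identity $(\RR f_*\ZZ_{\{p\}})_y=\RR\Gamma(f^{-1}(U_y),\ZZ_{\{p\}})$ is not a base change statement at all: it holds for every continuous map, since $U_y$ is the smallest open neighbourhood of $y$. So you set aside the c-proper hypothesis at the outset, and are then forced to try to recover it through the decomposition $W=A\sqcup\partial G_p$.

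The missing idea is to apply base change (Theorem \ref{cohprop}, (2)) to the inclusion of the \emph{point} $\{y\}\hookrightarrow Y$ rather than to $U_y$. This gives $(\RR f_*\ZZ_{\{p\}})_y=\RR\Gamma(f^{-1}(y),(\ZZ_{\{p\}})_{\vert f^{-1}(y)})$, and the restricted sheaf is zero as soon as $y\neq f(p)$. At $y=f(p)$, Lemma \ref{lemma} applied inside the fibre $f^{-1}(f(p))$, where the closure of $p$ is $G_p$, gives $\RR\Gamma_{\mathrm{red}}(\partial G_p,\ZZ)[-1]$. This is exactly the paper's proof.

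The same trick also repairs your own route. Since $\partial C_p$ is closed in $X$, the closure of each of its points is the same as in $X$, so $f_{\vert\partial C_p}$ is c-proper directly from Definition \ref{c-proper}. Base change at the point $y$ then gives $\RR\Gamma(W,\ZZ)=\RR\Gamma(C_p\cap f^{-1}(y),\ZZ)=\ZZ$ for $y<f(p)$, because $C_p\cap f^{-1}(y)$ is a fibre of $f_{\vert C_p}\colon C_p\to C_{f(p)}$. Hence your cone vanishes, and the piece $A$ and the Mayer--Vietoris comparison become superfluous.
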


\begin{proof} For every $y\in Y$, by  base change (Thm. \ref{cohprop}), one has:
\[ (\RR f_*\ZZ_{\{p\}})_y=\RR\Gamma(f^{-1}(y), {\ZZ_{\{p\}}}_{\vert f^{-1}(y)})= 0 \quad \text{if } y\neq f(p).\] Hence, $\RR f_*\ZZ_{\{p\}}$ is supported at $\{f(p)\}$, and its stalk at $f(p)$ is $\RR\Gamma(f^{-1}(f(p)), {\ZZ_{\{p\}}} )$. Thus,
\[ \RR f_*\ZZ_{\{p\}} = \ZZ_{\{f(p)\}}\otimes_\ZZ \RR\Gamma(f^{-1}(f(p)), {\ZZ_{\{p\}}} ).
\]  We conclude the proof because $\RR\Gamma(f^{-1}(f(p)), {\ZZ_{\{p\}}} )= \RR\Gamma_{\text{\rm red}}(\partial G_p  ,\ZZ)[-1]$, by   Lemma \ref{lemma} below, which shall be used several times in the sequel.
\end{proof}

\begin{lem}\label{lemma} Let $X$ be a finite space. For every $p\in X$, one has
\[ \RR\Gamma(X,\ZZ_{\{p\}}) = \RR\Gamma_{\text{\rm red}}(\partial C_p,\ZZ)[-1]\] and for every closed point $x_0< p$:
\[\aligned  \RR\Gamma_{x_0}(X,\ZZ_{\{p\}}) &= \RR\Gamma_{x_0}(\partial C_p,\ZZ)[-1]
\\ \RR\Gamma(X-\{x_0\},\ZZ_{\{p\}}) &= \RR\Gamma_{\text{\rm red}}(\partial C_p-\{x_0\},\ZZ)[-1]
\endaligned\]
\end{lem}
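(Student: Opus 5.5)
The plan is to derive all three identities from the short exact sequence of sheaves on $X$
\[ 0\to \ZZ_{\{p\}}\to \ZZ_{C_p}\to \ZZ_{\partial C_p}\to 0. \]
It is exact because $\{p\}$ is open in $C_p$, with closed complement $\partial C_p$. For a closed subset $C\subseteq X$ with inclusion $i$, we have $\ZZ_C=i_*\ZZ$, and $i_*$ is exact. So $\RR\Gamma(X,\ZZ_C)=\RR\Gamma(C,\ZZ)$, and more generally $\RR\Gamma(V,\ZZ_C)=\RR\Gamma(V\cap C,\ZZ)$ for every open $V\subseteq X$.

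For the first identity, apply $\RR\Gamma(X,-)$ to the sequence. This gives the exact triangle
\[\RR\Gamma(X,\ZZ_{\{p\}})\to\RR\Gamma(C_p,\ZZ)\to\RR\Gamma(\partial C_p,\ZZ).\]
Since $C_p$ has the maximum $p$, every open cover of it contains $C_p$ itself, so $\RR\Gamma(C_p,\ZZ)=\ZZ$. The second map is then the natural map $\ZZ\to\RR\Gamma(\partial C_p,\ZZ)$, which is compatible with restriction of constants. Its cone is $\RR\Gamma_{\rm red}(\partial C_p,\ZZ)$ by definition, so $\RR\Gamma(X,\ZZ_{\{p\}})=\RR\Gamma_{\rm red}(\partial C_p,\ZZ)[-1]$. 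When $\partial C_p=\emptyset$ this reads $\ZZ=\ZZ[1][-1]$, so the convention for the empty set is consistent.

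The third identity follows the same way on the open set $V=X-\{x_0\}$. Here $V\cap C_p=C_p-\{x_0\}$, which still has maximum $p$ and so is acyclic with $\RR\Gamma=\ZZ$. Also $V\cap\partial C_p=\partial C_p-\{x_0\}$, and the same cone argument applies.

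For the second identity, apply $\RR\Gamma_{x_0}(X,-)$ to the sequence. By the closed-subspace fact above, $\RR\Gamma_{x_0}(X,\ZZ_C)=\RR\Gamma_{x_0}(C,\ZZ)$ for closed $C\ni x_0$. This can be checked via the triangle $\RR\Gamma_{x_0}(X,F)\to\RR\Gamma(X,F)\to\RR\Gamma(X-\{x_0\},F)$, using $\RR\Gamma(V,\ZZ_C)=\RR\Gamma(V\cap C,\ZZ)$; alternatively, use excision to $U_{x_0}$. The middle term is $\RR\Gamma_{x_0}(C_p,\ZZ)$, which is the cone of $\RR\Gamma(C_p,\ZZ)\to\RR\Gamma(C_p-\{x_0\},\ZZ)$ shifted by $-1$. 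Both of these are $\ZZ$ and the restriction map is the identity, since $x_0<p$ means $p\ne x_0$. Hence $\RR\Gamma_{x_0}(C_p,\ZZ)=0$, and the triangle gives $\RR\Gamma_{x_0}(X,\ZZ_{\{p\}})=\RR\Gamma_{x_0}(\partial C_p,\ZZ)[-1]$.

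The only point requiring care is the identification of connecting maps with the canonical maps $\ZZ\to\RR\Gamma$ in the reduced-cohomology step. This follows from naturality of restriction of the constant sheaf, and I expect no real obstacle there.
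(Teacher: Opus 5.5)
Your argument is the paper's proof. You use the same short exact sequence $0\to\ZZ_{\{p\}}\to\ZZ_{C_p}\to\ZZ_{\partial C_p}\to 0$ and push it through $\RR\Gamma(X,-)$, $\RR\Gamma_{x_0}(X,-)$ and $\RR\Gamma(X-\{x_0\},-)$. You also deduce $\RR\Gamma_{x_0}(X,\ZZ_{C_p})=0$ from the acyclicity of $C_p$ and $C_p-\{x_0\}$, exactly as the paper does.

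One justification is wrong as written: ``since $C_p$ has the maximum $p$, every open cover of it contains $C_p$ itself.'' That argument works for a space with a \emph{minimum}, where $U_{\0}$ is the whole space. For a maximum it fails. For example, $C_p=\PP^1_{\FF_1}$ with closed points $a,b$ is covered by $U_a=\{a,p\}$ and $U_b=\{b,p\}$, and neither of these is $C_p$.

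The conclusion $\RR\Gamma(C_p,\ZZ)=\RR\Gamma(C_p-\{x_0\},\ZZ)=\ZZ$ is nevertheless true. Every nonempty open subset of a space with maximum $p$ contains $p$, so it is connected. Hence the constant sheaf has sections $\ZZ$ on each such open set, with identity restriction maps, so it is flasque and therefore acyclic. Alternatively, use contractibility to $p$, as the paper does, or Proposition~\ref{X=Xop} with $(C_p)^{\rm op}=U_{\wh p}$.

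With that repair, the rest of your proof goes through as you wrote it.
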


\begin{proof} It suffices to apply $\RR\Gamma(X,-)$, $\RR\Gamma_{x_0}(X,-)$, and $\RR\Gamma(X- \{x_0\},-)$ to the short exact sequence
\[ 0\to\ZZ_{\{p\}}\to \ZZ_{C_p}\to\ZZ_{\partial C_p}\to 0.\] We then take into account, on the one hand, that
\[ \aligned \RR\Gamma(X,\ZZ_{C_p})&= \RR\Gamma(C_p,\ZZ)=\ZZ, 
\\ \RR\Gamma(X-\{x_0\},\ZZ_{C_p})&= \RR\Gamma(C_p-\{x_0\},\ZZ)=\ZZ,
\endaligned\]     because $C_p$ and $ C_p-\{x_0\}$ are contractible to $p$, which implies that
$\RR\Gamma_{x_0}(X ,\ZZ_{C_p})=0$. On the other hand, we have
\[\aligned \RR\Gamma(X,\ZZ_{\partial C_p})&= \RR\Gamma(\partial C_p,\ZZ)
\\ \RR\Gamma_{x_0}(X,\ZZ_{\partial C_p})&= \RR\Gamma_{x_0}(\partial C_p,\ZZ)
\\ \RR\Gamma(X-\{x_0\},\ZZ_{\partial C_p})&= \RR\Gamma(\partial C_p-\{x_0\},\ZZ).\endaligned\]
The desired isomorphisms follow immediately from the long exact sequences associated with these functors.
\end{proof}

\section{  Compatibility between local and global dualizing complexes.}\label{loc-glob-compatibility}
\medskip

Let $f\colon X\to Y$ be a continuous map. The aim of this section is to show how the functor $f^\times$ relates the local dualizing complexes of $X$ and $Y$.

\begin{rem}[\bf Simplified notation]\label{notation}{\rm Let $f\colon X\to Y$ be a continuous map, and let $V\subseteq Y$ be an open subset. In what follows, we shall still denote by $f$ the restricted map $f_{\vert f^{-1}(V)}\colon f^{-1}(V)\to V$, and for every  $F\in \mathrm{D}(V)$, we write $$f^\times F:=(f_{\vert f^{-1}(V)})^\times F.$$ 
Analogously, if $Z$ is a closed subset of $f^{-1}(V)$, we write
\[ f^\times_ZF:= (f_{\vert f^{-1}(V)})^\times_Z F \] for every  $F\in \mathrm{D}(V)$.}
 %where $f_{\vert V}\colon f^{-1}(V)\to V$. 
% Thus, the morphism  \eqref{1} will be written as 
%$$(f^\times D_{U_y}^y)_{\vert U_{x_0}}\to D_{U_{x_0}}^{x_0}.$$}
\end{rem}

For every open subset $j\colon U\hookrightarrow X$, the functor $j_!\colon \mathrm{D}(U)\to \mathrm{D}(X)$ denotes the extension by zero. It is left adjoint to the restriction functor $j^{-1}$. This adjunction can be sheafified to yield a natural isomorphism $\RR j_*\RR\HHom_U^\pun(G,j^{-1}F)=\RR\HHom_X^\pun(j_!G,F)$ for every $G\in \mathrm{D}(U)$ and every $F\in \mathrm{D}(X)$. For every $F\in \mathrm{D}(X)$, we have $F_U=j_!j^{-1}F=F\otimes_\ZZ \ZZ_U$.  

\begin{prop}\label{restringirversuslocal} Let $x_0\in X$ be a closed point and $j\colon U_{x_0}\hookrightarrow X$ be the open immersion. For every $G\in \mathrm{D}(X)$, there is a natural morphism
\[ G_{\vert U_{x_0}}=j^{-1}G\longrightarrow j_{\{x_0\}}^\times G.\]
\end{prop}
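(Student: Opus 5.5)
The morphism will be obtained by adjunction. By definition (with $Z=\{x_0\}$, which is closed in $U_{x_0}$ because $x_0$ is a closed point of $X$), the functor $j^\times_{\{x_0\}}\colon \mathrm{D}(X)\to \mathrm{D}(U_{x_0})$ is right adjoint to
\[ \RR j_*^{\{x_0\}}=\RR j_*\RR\HHom^\pun_{U_{x_0}}(\ZZ_{\{x_0\}},-)\colon \mathrm{D}(U_{x_0})\to \mathrm{D}(X).\]
Hence giving a natural morphism $j^{-1}G\to j^\times_{\{x_0\}}G$ amounts to giving a natural morphism
\[ \RR j_*\RR\HHom^\pun_{U_{x_0}}(\ZZ_{\{x_0\}},j^{-1}G)\longrightarrow G\]
in $\mathrm{D}(X)$, natural in $G$. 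I would construct this morphism and then take its adjoint.

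The first step is to rewrite the source. I would use the sheafified adjunction $j_!\dashv j^{-1}$ recalled just before the statement:
\[\RR j_*\RR\HHom^\pun_{U_{x_0}}(\ZZ_{\{x_0\}},j^{-1}G)=\RR\HHom^\pun_X(j_!\ZZ_{\{x_0\}},G).\]
Since $\{x_0\}$ is closed in $X$ and contained in $U_{x_0}$, the extension by zero $j_!\ZZ_{\{x_0\}}$ is just the sheaf $\ZZ_{\{x_0\}}$ on $X$, i.e. the constant sheaf supported on the closed subset $\{x_0\}$. Therefore the source equals $\RR\HHom^\pun_X(\ZZ_{\{x_0\}},G)=\RR\underline\Gamma_{x_0}G$.

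The second step uses that $\{x_0\}$ is closed, so there is a natural surjection $\ZZ\to\ZZ_{\{x_0\}}$ (restriction to a closed subset). Applying $\RR\HHom^\pun_X(-,G)$ gives the natural morphism
\[\RR\underline\Gamma_{x_0}G=\RR\HHom^\pun_X(\ZZ_{\{x_0\}},G)\to\RR\HHom^\pun_X(\ZZ,G)=G.\]
This is the usual "forget supports" map. Its adjoint under $\RR j_*^{\{x_0\}}\dashv j^\times_{\{x_0\}}$ is the desired morphism $j^{-1}G\to j^\times_{\{x_0\}}G$. It is natural in $G$ because every isomorphism and morphism used above is natural.

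The only point requiring care is the identification $j_!\ZZ_{\{x_0\}}=\ZZ_{\{x_0\}}$, together with the compatibility of the sheafified adjunction isomorphism. Both are formal, since $\{x_0\}$ is closed in $X$, so I expect no real obstacle.

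An equivalent description, which will be useful later, is obtained by composing with the counit. The morphism is the composite
\[ j^{-1}G\to j^\times_{\{x_0\}}\RR j^{\{x_0\}}_* j^{-1}G\to j^\times_{\{x_0\}}G,\]
where the first arrow is the unit of the adjunction and the second is $j^\times_{\{x_0\}}$ applied to the map constructed above.
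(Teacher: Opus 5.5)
Your proposal is correct and is the paper's argument: use the adjunction $\RR j_*^{\{x_0\}}\dashv j^\times_{\{x_0\}}$, identify $\RR j_*\RR\HHom^\pun_{U_{x_0}}(\ZZ_{\{x_0\}},j^{-1}G)$ with $\RR\HHom^\pun_X(\ZZ_{\{x_0\}},G)$ via $j_!\dashv j^{-1}$ and $j_!\ZZ_{\{x_0\}}=\ZZ_{\{x_0\}}$, then apply $\RR\HHom^\pun_X(-,G)$ to $\ZZ\to\ZZ_{\{x_0\}}$. Two small slips: in your last paragraph the first arrow is the unit, not the counit. Also, closedness of $x_0$ in $X$ is needed for the surjection $\ZZ\to\ZZ_{\{x_0\}}$ on $X$; it is not needed for $\{x_0\}$ to be closed in $U_{x_0}$, which always holds.
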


\begin{proof} By adjunction, it suffices to define a morphism $ \RR j_* \RR\HHom_{U_{x_0}}^\pun(\ZZ_{\{x_0\}}, j^{-1}G)\to G$. Taking into account that $$ \RR j_* \RR\HHom_{U_{x_0}}^\pun(\ZZ_{\{x_0\}}, j^{-1}G)= \RR\HHom_{X}^\pun(j_!\ZZ_{\{x_0\}},  G) = \RR\HHom_{X}^\pun( \ZZ_{\{x_0\}},  G) ,$$ the desired morphism is obtained directly by applying the functor  $\RR\HHom_{X}^\pun( -,  G)$ to the natural canonical map $\ZZ\to\ZZ_{\{x_0\}}$.

%Para todo $G\in D(U_x)$ el morfismo natural
%\[ j_!\RR\HHom_{U_x}^\pun(\ZZ_{\{x\}}, G) \to \RR j_* \RR\HHom_{U_x}^\pun(\ZZ_{\{x\}}, G)\] es isomorfismo porque 
%$\RR\HHom_{U_x}^\pun(\ZZ_{\{x\}}, G)$ está soportado en $\{x\}$, punto cerrado de $X$. 
\end{proof}

\begin{defn}{\rm Let $f\colon X\to Y$ be a continuous map. We say that a point  $x_0\in X$ is {\em $f$-closed} if $x_0$ is a closed point of the fibre $f^{-1}(f(x_0))$ (equivalently, a closed point of the open subset $f^{-1}(U_{f(x_0)})$). %For each $x\in X$ we shall denote by $X_x$ the fibre of $f$ through $x$, i.e.: $X_x=f^{-1}(f(x))$.
}
\end{defn}

\begin{prop}\label{f-localdual} Let $f\colon X\to Y$ be a continuous map, let  $x_0\in X$ be an $f$-closed point, set $y_0=f(x_0)$, and let $j\colon U_{x_0}\hookrightarrow f^{-1}(U_{y_0})$ be the open immersion. For every $F\in \mathrm{D}(U_{y_0})$,there is a natural morphism (adopting the simplified notation of Remark \ref{notation})
\begin{equation}\label{0} (f^\times_{f^{-1}(y_0)} F)_{\vert U_{x_0}} %= j^{-1} f^\times_{X_{y_0}} F 
\to (j\circ f)^\times_{x_0} F.
\end{equation}
In particular, one has a a natural compatibility morphism:
\begin{equation}\label{1}   ( f^\times D_{U_{y_0}}^{y_0})_{\vert U_{x_0}}\to    D_{U_{x_0}}^{x_0}.\end{equation}
\end{prop}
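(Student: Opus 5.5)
The natural route is to reduce \eqref{0} to Proposition \ref{restringirversuslocal} via transitivity (Proposition \ref{transitivity}). Throughout, $f$ denotes the restricted map $f^{-1}(U_{y_0})\to U_{y_0}$, as in Remark \ref{notation}. Since $x_0$ is $f$-closed, it is a closed point of the open subset $f^{-1}(U_{y_0})$. Hence Proposition \ref{restringirversuslocal}, applied to the space $f^{-1}(U_{y_0})$, the closed point $x_0$ and the open immersion $j\colon U_{x_0}\hookrightarrow f^{-1}(U_{y_0})$, gives for every $G\in \mathrm{D}(f^{-1}(U_{y_0}))$ a natural morphism
\[ j^{-1}G\longrightarrow j^\times_{\{x_0\}}G .\]

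I will take $G=f^\times_{f^{-1}(y_0)}F$. The left-hand side is then $(f^\times_{f^{-1}(y_0)}F)_{\vert U_{x_0}}$. For the right-hand side, apply Proposition \ref{transitivity} to $U_{x_0}\overset{j}\to f^{-1}(U_{y_0})\overset{f}\to U_{y_0}$, with the closed subset $Z=\{x_0\}$ of $U_{x_0}$ (closed because $x_0$ is the minimum of $U_{x_0}$) and the closed subset $W=f^{-1}(y_0)$ of $f^{-1}(U_{y_0})$. This gives
\[ j^\times_{\{x_0\}}\circ f^\times_{f^{-1}(y_0)}=(f\circ j)^\times_{\{x_0\}\cap j^{-1}(f^{-1}(y_0))}=(f\circ j)^\times_{x_0}, \]
because $x_0\in f^{-1}(y_0)$. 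The set $f^{-1}(y_0)$ is closed in $f^{-1}(U_{y_0})$, since $y_0$ is closed in $U_{y_0}$.

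Composing these yields \eqref{0}. Naturality in $F$ is inherited from the naturality of the morphism in Proposition \ref{restringirversuslocal} and of the transitivity isomorphism. (The statement writes $j\circ f$, which should be read as $f\circ j$, the composite $U_{x_0}\to U_{y_0}$.)

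For \eqref{1}, take $F=\ZZ$, or more precisely $F=D_{U_{y_0}}^{y_0}=\pi^\times_{y_0}\ZZ$, where $\pi\colon U_{y_0}\to\{*\}$. Then:
\begin{itemize}
\item By transitivity, $f^\times_{f^{-1}(y_0)}\pi^\times_{y_0}\ZZ=(\pi f)^\times_{f^{-1}(y_0)}\ZZ$. Also $f^\times\pi^\times_{y_0}=(\pi f)^\times_{f^{-1}(y_0)}$, since $f^\times=f^\times_{f^{-1}(U_{y_0})}$ with full support. So the left side of \eqref{0} becomes $(f^\times D^{y_0}_{U_{y_0}})_{\vert U_{x_0}}$.
\item By transitivity again, $(f\circ j)^\times_{x_0}\pi^\times_{y_0}\ZZ=(\pi f j)^\times_{x_0}\ZZ=D^{x_0}_{U_{x_0}}$, since $(fj)^{-1}(y_0)\ni x_0$.
\end{itemize}
This produces \eqref{1}.

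The main care point is the bookkeeping of supports in the transitivity formula, and checking that each closed subset is closed in the right ambient space. Everything else is formal adjunction.
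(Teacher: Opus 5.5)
Your proof is correct and is the same as the paper's. You apply Proposition \ref{restringirversuslocal} on $f^{-1}(U_{y_0})$ to $G=f^\times_{f^{-1}(y_0)}F$ and identify the target by transitivity; you then specialize to $F=D^{y_0}_{U_{y_0}}$, using transitivity twice more (not to $F=\ZZ$, as your opening phrase briefly suggests). Your reading of $j\circ f$ as the composite $f\circ j\colon U_{x_0}\to U_{y_0}$ is also right.
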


\begin{proof} By definition, $x_0$ is a closed point of $f^{-1}(U_{y_0})$. Applying Proposition \ref{restringirversuslocal}   to the map $f\colon  f^{-1}(U_{y_0}) \to U_{y_0}$ and setting $G=f^\times_{f^{-1}(y_0)} F$, we obtain a morphism
\[ (f^\times_{f^{-1}(y_0)} F)_{\vert U_{x_0}} \longrightarrow j^\times_{x_0} f^\times_{f^{-1}(y_0)} F = (j\circ f)^\times_{x_0}F, \]
where the last step follows from Proposition \ref{transitivity}.

The specialized morphism \eqref{1} is derived by substituting $F = D_{U_{y_0}}^{y_0}$, since by transitivity \textup{(Proposition \ref{transitivity})} we have:
\[ f^\times_{f^{-1}(y_0)}D_{U_{y_0}}^{y_0} = D_{f^{-1}(U_{y_0})}^{f^{-1}(y_0)} = f^\times D_{U_{y_0}}^{y_0}, \] 
and 
\[ (j\circ f)^\times_{x_0} D_{U_{y_0}}^{y_0} = D_{U_{x_0}}^{x_0}. \]
\end{proof}

In the specific case where $Y$ is a single point, we immediately deduce the following:

\begin{cor}\label{localglobalcompatibility}  For every closed point  $x_0\in X$, there is a natural morphism
$$ (D_X)_{\vert U_{x_0}}\to D_{U_{x_0}}^{x_0}.$$ 

%This morphism is obtained, by adjunction, from the natural morphism $$\RR\Gamma_{x_0}(U_{x_0},-)\to \RR\Gamma(X,j_!(-)),$$ with $j\colon U_{x_0}\hookrightarrow X$.
 
\end{cor}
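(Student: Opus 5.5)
The plan is to specialize Proposition \ref{f-localdual} to the map $f\colon X\to Y=\{*\}$. First, every closed point $x_0$ is $f$-closed: the fibre $f^{-1}(f(x_0))$ is all of $X$, and $x_0$ is closed in $X$. Moreover $Y=\{*\}$ is local with closed point $y_0=*$ and $U_{y_0}=Y$. Therefore Proposition \ref{f-localdual} applies with $f^{-1}(U_{y_0})=X$ and with $j\colon U_{x_0}\hookrightarrow X$ the open immersion.

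The next step is to identify the two sides of the morphism \eqref{1}. On the point, the local dualizing complex is $D^{*}_{\{*\}}=\ZZ$. Indeed, $\RR\Gamma_*(\{*\},F)=F$, so $D^*_{\{*\}}$ represents $F\mapsto F^\vee$, which is represented by $\ZZ$.

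For the source, $f^\times D^{y_0}_{U_{y_0}}=f^\times\ZZ=D_X$ by Definition \ref{defglobaldualizing}. Under the simplified notation of Remark \ref{notation}, restricting to $f^{-1}(U_{y_0})=X$ changes nothing. Hence the source of \eqref{1} is $(D_X)_{\vert U_{x_0}}$.

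For the target, $D_{U_{x_0}}^{x_0}$ is already the local dualizing complex of $U_{x_0}$. As a sanity check, it equals $(j\circ f)^\times_{x_0}\ZZ$, where $j\circ f\colon U_{x_0}\to\{*\}$ is the projection, which is exactly the definition.

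Putting these identifications into \eqref{1} gives the required natural morphism $(D_X)_{\vert U_{x_0}}\to D^{x_0}_{U_{x_0}}$. Naturality is inherited from the construction in Propositions \ref{restringirversuslocal} and \ref{transitivity}.

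The only step needing any care is the identification $D^*_{\{*\}}=\ZZ$ and the fact that a closed point of $X$ is $f$-closed. Both are immediate from the definitions, so I expect no real obstacle. Alternatively, one could apply Proposition \ref{restringirversuslocal} directly with $G=D_X$ and then use transitivity, $j^\times_{x_0}D_X=j^\times_{x_0}f^\times\ZZ=(f\circ j)^\times_{x_0}\ZZ=D^{x_0}_{U_{x_0}}$. This is the same argument unwound.
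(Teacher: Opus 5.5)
Your proposal is correct and matches the paper's argument: the corollary is Proposition \ref{f-localdual} specialized to $Y=\{*\}$, where every closed point is $f$-closed, $D^{*}_{\{*\}}=\ZZ$, and $f^\times\ZZ=D_X$. Your unwound alternative, Proposition \ref{restringirversuslocal} with $G=D_X$ followed by transitivity, is simply the proof of Proposition \ref{f-localdual} written out in this special case.
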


\begin{defn}\label{proper-2} {\rm Let $f\colon X\to Y$ be a continuous map. We say that {\em $f^\times$ is compatible with local dualizing complexes} if the morphism \eqref{1}
\[ ( f^\times D_{U_{y_0}}^{y_0})_{\vert U_{x_0}}\to    D_{U_{x_0}}^{x_0}\] is an isomorphism for every $f$-closed point $x_0$. If $Y$ is a single point, this condition means that $(D_X)_{\vert U_{x_0}}\to D_{U_{x_0}}^{x_0}$ is an isomorphism for every closed point $x_0\in X$, in which case we say that the global and local dualizing complexes are compatible.}
\end{defn}

\begin{prop}\label{equivalencias} Let $f\colon X\to Y$ be a continuous map, let $x_0\in X$ be an $f$-closed point, and set $y_0=f(x_0)$. The following conditions are equivalent:
\begin{enumerate} 
\item For every $F\in \mathrm{D}(U_{y_0})$,  the morphism \eqref{0} is an isomorphism.
\item The morphism \eqref{1} is an isomorphism.
\item The natural morphism \[\RR\Hom_{X}^\pun(\ZZ_{\{x_0\}}, G)\to \RR\Hom_{Y}^\pun(\ZZ_{\{y_0\}},\RR f_* G)\] is an isomorphism  for every $G\in \mathrm{D}(X)$ supported in $U_{x_0}$.
\item[(3')] The natural morphism \begin{equation}\label{2}\RR\Hom_{f^{-1}(U_{y_0})}^\pun(\ZZ_{\{x_0\}}, G)\to \RR\Hom_{U_{y_0}}^\pun(\ZZ_{\{y_0\}},\RR f_* G)\end{equation} is an isomorphism for every $G\in \mathrm{D}(f^{-1}(U_{y_0}))$ supported in $U_{x_0}$.
\item For every $p\geq x_0$, the natural morphism 
$$\RR\Hom_{X}^\pun(\ZZ_{\{x_0\}}, \ZZ_{\{p\}})\to \RR\Hom_{Y}^\pun(\ZZ_{\{y_0\}},\RR f_* \ZZ_{\{p\}})$$ is an isomorphism.
\item[(4')] For every $p\geq x_0$, the natural morphism 
$$\RR\Hom_{f^{-1}(U_{y_0})}^\pun(\ZZ_{\{x_0\}}, \ZZ_{\{p\}})\to \RR\Hom_{U_{y_0}}^\pun(\ZZ_{\{y_0\}},\RR f_* \ZZ_{\{p\}})$$ is an isomorphism.
\end{enumerate}
\end{prop}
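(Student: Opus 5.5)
The plan is to prove the chain (1)$\Leftrightarrow$(2), (1)$\Leftrightarrow$(3'), (3')$\Leftrightarrow$(3), (3')$\Leftrightarrow$(4') and (4)$\Leftrightarrow$(4'), working throughout on $f\colon f^{-1}(U_{y_0})\to U_{y_0}$ as in Remark \ref{notation}. The first step is to rewrite the morphism \eqref{0} via adjunction. For $H\in \mathrm{D}(U_{x_0})$ and $F\in\mathrm{D}(U_{y_0})$, $\RR\Hom^\pun(H,(f^\times_{f^{-1}(y_0)}F)_{\vert U_{x_0}})=\RR\Hom^\pun(\RR f_*^{f^{-1}(y_0)} j_!H,F)$. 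Similarly, $\RR\Hom^\pun(H,(j\circ f)^\times_{x_0}F)=\RR\Hom^\pun(\RR(f j)_*\RR\underline\Gamma_{x_0}H,F)$. Tracing the construction in Propositions \ref{restringirversuslocal} and \ref{f-localdual}, \eqref{0} is induced by a natural morphism of functors
\[\alpha_H\colon \RR(f j)_*\RR\underline\Gamma_{x_0}H\to \RR f_*\RR\underline\Gamma_{f^{-1}(y_0)}j_!H,\]
which comes from $\ZZ\to\ZZ_{\{x_0\}}$ (noting $\ZZ_{f^{-1}(y_0)}\to\ZZ_{\{x_0\}}$ on $U_{x_0}$, since $U_{x_0}\cap f^{-1}(y_0)$ is a neighbourhood of $x_0$). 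Then (1) holds iff $\alpha_H$ is an isomorphism for all $H$, by Yoneda over all $F$. Conversely, if $\alpha_H$ is an isomorphism for all $H$, then (1) follows directly.

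Next, both sides of $\alpha_H$ are supported at $y_0$. The left side is supported at $f(x_0)=y_0$, since $\RR\underline\Gamma_{x_0}H$ is supported at $x_0$ and $\{y_0\}$ is closed in $U_{y_0}$. The right side is supported on $\overline{\{y_0\}}\cap U_{y_0}=\{y_0\}$. A complex supported at the closed point $y_0$ is determined by its stalk, which equals its global sections. So $\alpha_H$ is an isomorphism iff $\RR\Gamma(U_{y_0},\alpha_H)$ is. This map identifies with
\[\RR\Hom^\pun(\ZZ_{\{x_0\}},j_!H)\to\RR\Hom^\pun(\ZZ_{f^{-1}(y_0)},j_!H)=\RR\Hom^\pun_{U_{y_0}}(\ZZ_{\{y_0\}},\RR f_*j_!H),\]
where the last equality uses sheafified $f^{-1}\dashv\RR f_*$ and $f^{-1}\ZZ_{\{y_0\}}=\ZZ_{f^{-1}(y_0)}$. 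Setting $G=j_!H$, which is exactly a complex supported in $U_{x_0}$, this gives (1)$\Leftrightarrow$(3').

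For (2)$\Rightarrow$(1), take $H$ arbitrary and test against $F=D^{y_0}_{U_{y_0}}$. The map $\RR\Hom(\alpha_H,D^{y_0})$ is the $\ZZ$-dual of $\RR\Gamma_{y_0}(\alpha_H)=\RR\Gamma(\alpha_H)$. This is because both sides of $\alpha_H$ are supported at $y_0$, so local cohomology equals global sections. Hence (2) gives that $\RR\Gamma(\alpha_H)^\vee$ is an isomorphism. The difficulty is descending from the dual back to $\RR\Gamma(\alpha_H)$, which I expect to be the main obstacle. Dualization is not conservative in general, but it is conservative on complexes with finitely generated cohomology. So I would first reduce to $H$ with such cohomology, and then to $H=\ZZ_{\{p\}}$ by dévissage: every $H$ on a finite space is built by triangles and homotopy colimits from sheaves $\ZZ_{\{p\}}\otimes M$, and both sides of $\alpha$ commute with these operations on a finite space. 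Moreover $\RR\Gamma(\alpha_{\ZZ_{\{p\}}})$ has finitely generated cohomology, being computed on finite spaces. This dévissage gives (2)$\Rightarrow$(1); (1)$\Rightarrow$(2) is trivial.

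The same dévissage gives (3')$\Leftrightarrow$(4'), because the class of $G$ for which \eqref{2} is an isomorphism is triangulated and closed under direct sums and tensoring with abelian groups, and it is generated by the $\ZZ_{\{p\}}$ with $p\ge x_0$. Finally, (3)$\Leftrightarrow$(3') and (4)$\Leftrightarrow$(4') follow by excision. For $G$ supported in $U_{x_0}\subseteq f^{-1}(U_{y_0})$, we have $\RR\Hom_X(\ZZ_{\{x_0\}},G)=\RR\Gamma_{x_0}(U_{x_0},G)$. On the other side, $(\RR f_*G)_{\vert U_{y_0}}=\RR f_*(G_{\vert f^{-1}(U_{y_0})})$ and $\RR\Gamma_{y_0}(Y,-)=\RR\Gamma_{y_0}(U_{y_0},-)$, so the two versions of each morphism coincide.
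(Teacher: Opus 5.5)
Your proposal is correct and follows essentially the same route as the paper: your $\alpha_H$ is the paper's morphism \eqref{1'}, and your support-at-$y_0$ reduction to global sections, your excision steps, and your duality against $D^{y_0}_{U_{y_0}}$ evaluated on the $\ZZ_{\{p\}}$ are the paper's steps. You also make explicit the finite generation needed to undo $(-)^\vee$, which the paper uses tacitly. The only real difference is how you pass from the $\ZZ_{\{p\}}$ to arbitrary objects: you use d\'evissage, which requires checking that both sides of $\alpha$ commute with direct sums. The paper avoids this by noting $\RR\Hom^\pun_{U_{x_0}}(\ZZ_{\{p\}},\eqref{0})=\RR\Hom^\pun_{U_{y_0}}(\alpha_{\ZZ_{\{p\}}},F)$, and then using only that the functors $\RR\Hom^\pun(\ZZ_{\{p\}},-)$, $p\ge x_0$, jointly detect isomorphisms in $\mathrm{D}(U_{x_0})$.
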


\begin{proof} First, note that any $G\in \mathrm{D}(f^{-1}(U_{y_0}))$ supported in $U_{x_0}$ can be written as $G=j_!M$, where $M\in \mathrm{D}(U_{x_0})$ and $j\colon U_{x_0}\hookrightarrow f^{-1}(U_{y_0})$ is the open immersion. By adjunction, the morphism \eqref{0} defines a natural map
\begin{equation}\label{1'} \RR f_* \RR\HHom_{f^{-1}(U_{x_0})}^\pun(\ZZ_{\{x_0\}}, j_!M)\to \RR\HHom_{ U_{y_0}}^\pun(\ZZ_{\{y_0\}}, \RR f_* j_!M).\end{equation}  Indeed, we have:
\[\aligned \RR\Hom_{U_{x_0}}^\pun(M, j^{-1} f^\times_{f^{-1}(y_0)}F)
&= \RR\Hom_{f^{-1}(U_{x_0})}^\pun(j_!M,   f^\times_{f^{-1}(y_0)}F) \\
&= \RR\Hom_{U_{y_0}}^\pun(\RR f_*\RR\HHom_{f^{-1}(U_{y_0})}^\pun (\ZZ_{f^{-1}(y_0)}, j_!M), F)\\
%(\ZZ_{f^{-1}(y_0)}=f^{-1}\ZZ_{\{y_0\}}) 
&= \RR\Hom_{U_{y_0}}^\pun (\RR\HHom_{U_{y_0}}^\pun(\ZZ_{\{y_0\}}, \RR f_* j_!M), F)
\endaligned\] and
\[ \aligned \RR\Hom_{U_{x_0}}^\pun(M, (j\circ f)^\times_{x_0} F) 
&= \RR\Hom_{U_{y_0}}^\pun (\RR f_*\RR j_*\RR\HHom_{U_{x_0}}^\pun(\ZZ_{\{x_0\}}, M),F)\\
&= \RR\Hom_{U_{y_0}}^\pun (\RR f_* \RR\HHom_{f^{-1}(U_{x_0})}^\pun(\ZZ_{\{x_0\}}, j_!M),F).
\endaligned\] 

Taking global sections in \eqref{1'}, we recover the morphism \eqref{2}. Moreover, \eqref{1'} is an isomorphism if and only if \eqref{2} is an isomorphism, since both sides in \eqref{1'} are supported strictly at $\{y_0\}$. This yields the equivalence of (1) and (3').

The equivalence of (3) and (3') follows from the excision property. Observe that $G\in \mathrm{D}(X)$ is supported in $U_{x_0}$ if and only if $G=(j')_!G'$ for a unique $G'\in \mathrm{D}(f^{-1}(U_{y_0}))$ supported in $U_{x_0}$, where $j'\colon f^{-1}(U_{y_0})\hookrightarrow X$ is the open immersion. Consequently,
\[ \RR\Hom_{X}^\pun(\ZZ_{\{x_0\}}, G)= \RR\Hom_{f^{-1}(U_{y_0})}^\pun(\ZZ_{\{x_0\}}, G')\] and
\[ \RR\Hom_{Y}^\pun(\ZZ_{\{y_0\}},\RR f_* G)= \RR\Hom_{U_{y_0}}^\pun(\ZZ_{\{y_0\}},\RR f_* G')\] by excision. Thus (3) is equivalent to (3'). Analogously, (4) is equivalent to (4').

The equivalence (4')$\Leftrightarrow $(1) follows from the fact that the sheaves  $\{\ZZ_{\{p\}}\}_{p\in U_{x_0}}$ are a system of generators of $\mathrm{D}(U_{x_0})$. Thus, \eqref{0} is an isomorphism for every complex $F$ if and only if it remains an isomorphism after applying the functor  $\RR\Hom_{U_{x_0}}(\ZZ_{\{p\}},-)$ for every $p\in U_{x_0}$. Since
\[\RR\Hom_{U_{x_0}}^\pun(\ZZ_{\{p\}},\eqref{0})=\RR\Hom_{U_{y_0}}^\pun(\eqref{1'},F), \text{ for }M=\ZZ_{\{p\}},\] the assertion holds.

%for any $M\in D(U_{y_0})$. Since both members are supported on $\{y_0\}$, this morphism is an isomorphism if and only if it is an isomorphism after taking global sections, that is, if  (3) holds (notice that $G\in D(f^{-1}(U_{y_0})$ is supported on $U_{x_0}$ iff $G=j_!M$ for  $M\in D(U_{x_0})$).

The implications (3)$\Rightarrow$(4) and (1)$\Rightarrow$(2) are immediate. To conclude, let us see that (2)$\Rightarrow$(4'). This follows because   taking $\RR\Hom_{U_{x_0}}^\pun(\ZZ_{\{p\}},-)$ in the morphism \eqref{1} yields the dual morphism of (4'). Indeed,
 \[ \aligned  \RR\Hom_{U_{x_0}}^\pun(\ZZ_{\{p\}}, (f^\times D_{U_{y_0}}^{y_0})_{\vert U_{x_0}}) &= 
 \RR\Hom_{f^{-1}(U_{y_0})}^\pun(\ZZ_{\{p\}}, f^\times D_{U_{y_0}}^{y_0}) 
 \\ &=  \RR\Hom_{U_{y_0}}^\pun(\RR f_* \ZZ_{\{p\}},  D_{U_{y_0}}^{y_0}) 
 \\ &= \RR\Hom_{U_{y_0}}^\pun (\ZZ_{\{y_0\}}, \RR f_* \ZZ_{\{p\}})^\vee
 %\\ &= \RR\Hom_{Y}^\pun (\ZZ_{\{y_0\}}, \RR f_* \ZZ_{\{p\}})^\vee, \text{ by excision},
 \endaligned\]
 and 
 \[ \RR\Hom_{U_{x_0}}^\pun(\ZZ_{\{p\}}, D_{U_{x_0}}^{x_0}) = \RR\Hom_{U_{x_0}}^\pun (\ZZ_{\{x_0\}}, \ZZ_{\{p\}})^\vee  =  \RR\Hom_{f^{-1}(U_{y_0})}^\pun (\ZZ_{\{x_0\}}, \ZZ_{\{p\}})^\vee, \text{ by excision}.
 \]
\end{proof}

If $Y$ is a single point, Proposition \ref{equivalencias} yields:
\begin{cor}\label{properspace}  For every closed point  $x_0\in X$, the following conditions are equivalent:
\begin{enumerate}
\item The morphism $ (D_X)_{\vert U_{x_0}}\to D_{U_{x_0}}^{x_0}$ is an isomorphism.
\item For every $p\geq x_0$, the natural morphism $\RR\Gamma_{x_0}(X,\ZZ_{\{p\}})\to \RR\Gamma(X,\ZZ_{\{p\}})$ is an isomorphism.
\end{enumerate}

%This morphism is obtained, by adjunction, from the natural morphism $$\RR\Gamma_{x_0}(U_{x_0},-)\to \RR\Gamma(X,j_!(-)),$$ with $j\colon U_{x_0}\hookrightarrow X$.
 
\end{cor}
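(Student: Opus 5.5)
This is the specialization of Proposition \ref{equivalencias} to the case where $Y=\{*\}$ is a single point, so the plan is to translate conditions (2) and (4) of that proposition into the present language. With $f\colon X\to\{*\}$, every point of $X$ lies in the single fibre $f^{-1}(*)=X$. Hence a point $x_0$ is $f$-closed exactly when it is a closed point of $X$, and $y_0=*$ with $U_{y_0}=\{*\}$. The local dualizing complex $D^{y_0}_{U_{y_0}}$ of a point is $\ZZ$, since $\RR\Gamma_*(\{*\},-)$ is the identity. Therefore $f^\times D^{y_0}_{U_{y_0}}=f^\times\ZZ=D_X$, and the morphism \eqref{1} becomes $(D_X)_{\vert U_{x_0}}\to D^{x_0}_{U_{x_0}}$. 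This is the morphism of Corollary \ref{localglobalcompatibility}, so condition (1) here is condition (2) of Proposition \ref{equivalencias}.

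Next we identify condition (4) of Proposition \ref{equivalencias}. For $p\geq x_0$ we have
\[\RR\Hom^\pun_X(\ZZ_{\{x_0\}},\ZZ_{\{p\}})=\RR\Gamma_{x_0}(X,\ZZ_{\{p\}}),\]
using $\RR\Hom^\pun_X(\ZZ_Z,F)=\RR\Gamma_Z(X,F)$ with $Z=\{x_0\}$, which is closed because $x_0$ is a closed point. On the target side, $\RR f_*\ZZ_{\{p\}}=\RR\Gamma(X,\ZZ_{\{p\}})$ viewed on the point, and $\RR\Hom^\pun_{\{*\}}(\ZZ,-)$ is the identity. So the target is $\RR\Gamma(X,\ZZ_{\{p\}})$.

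It remains to check that, under these identifications, the natural morphism in (4) is the forgetful map $\RR\Gamma_{x_0}(X,-)\to\RR\Gamma(X,-)$. I expect this compatibility to be the only point requiring care. The map in (4) comes from $\ZZ_X\to\ZZ_{\{x_0\}}$ through $\RR\Hom(\ZZ_{\{x_0\}},G)\to\RR\Hom(\ZZ,G)=\RR\Gamma(X,G)$, as in the proof of Proposition \ref{restringirversuslocal}, and this is exactly the inclusion of sections supported at $x_0$. This requires unwinding the adjunctions in \eqref{1'}, which is routine.

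With these identifications, the equivalence (2)$\Leftrightarrow$(4) of Proposition \ref{equivalencias} gives the corollary directly.
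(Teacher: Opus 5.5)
Your proposal is correct and is exactly the paper's argument: the corollary is obtained by specializing Proposition \ref{equivalencias} to $Y=\{*\}$, with condition (2) there becoming condition (1) here and condition (4) there becoming condition (2) here. Your check that the map in (4) is the forgetful map $\RR\Gamma_{x_0}(X,-)\to\RR\Gamma(X,-)$, induced by $\ZZ\to\ZZ_{\{x_0\}}$, is the one identification the paper leaves implicit, and you have it right.
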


\begin{rem}\label{immersion} {\rm If $f\colon X\hookrightarrow Y$ is an immersion (i.e., $X$ is a subspace of $Y$), then every point $x_0\in X$ is automatically $f$-closed and
\[ \RR\Hom_X^\pun(G,G')=\RR\Hom_Y^\pun(\RR f_*G, \RR f_*G')\] for all $G,G'\in \mathrm{D}(X)$, since $f^{-1}\RR f_*G=G$ for every $G$. Furthermore, 
$$\RR\Hom_Y(\ZZ_{\{f(x_0)\}}, \RR f_*G)= \RR\Hom_X(f^{-1}\ZZ_{\{f(x_0)\}},  G) = \RR\Hom_X( \ZZ_{\{ x_0\}},  G) ,$$  which implies that  condition (3') (and then all conditions) of Proposition \ref{equivalencias} is satisfied.}
\end{rem}

\section{Proper maps and proper spaces}\label{section-properness}

\begin{defn}\label{defn-proper} {\rm We say that a continuous map $f\colon X\to Y$ is a  {\em proper morphism} if:
\begin{enumerate} 
\item $f$ is c-proper (Definition \ref{c-proper}).
\item  $f^\times$ is compatible with local dualizing complexes (Definition \ref{proper-2}). That is,  any of the equivalent conditions of Proposition \ref{equivalencias} holds for every $f$-closed point.
\end{enumerate} Thus, a proper map $f$ is a c-proper map such that,  for every $f$-closed point $x_0$ and every $p\geq x_0$, the natural morphism
\begin{equation}\label{proper-condition}\RR\Hom_X^\pun(\ZZ_{\{x_0\}} , \ZZ_{\{p\}}) \to \RR\Hom^\pun_Y(\ZZ_{\{f(x_0)\}} , \RR f_*\ZZ_{\{p\}})\end{equation} is an isomorphism. Since this map is automatically an isomorphism for  $p=x_0$ whenever $f$ is c-proper, it suffices to check this condition for all  $p>x_0$.}
\end{defn}

\begin{defn} {\rm A finite space $X$ is called {\em proper}   if the structural projection to a single point, $X\to\{*\}$, is a proper morphism. Since $X\to\{*\}$ is always c-proper, it follows that $X$ is proper if and only if its global and local dualizing complexes are compatible (Definition \ref{proper-2}). By Corollary \ref{properspace}, this is equivalent to saying that, for every closed point  $x_0\in X$ and every $p\geq x_0$, the natural morphism $$\RR\Gamma_{x_0}(X,\ZZ_{\{p\}})\to \RR\Gamma (X,\ZZ_{\{p\}})$$ is an isomorphism. Again,     verifying this property for all $p>x_0$ is sufficient.
 }
\end{defn}

The next theorem characterizes proper spaces in terms of the cohomology of the closed subsets $\partial C_p$.

\begin{thm}\label{thmproperspace} The following conditions are equivalent:
\begin{enumerate}\item $X$ is proper.
\item  For every closed point $x_0\in X$ and every $p>x_0 $, the space $\partial C_p-\{x_0\}$ is cohomologically trivial: 
\[ \RR\Gamma_{\text{\rm red}}( \partial C_p-\{x_0\},\ZZ)=0.\]
\item For every closed point $x_0\in X$ and every $p>x_0 $, one has a natural isomorphism
\[ \RR\Gamma_{x_0}(\partial C_p,\ZZ) = \RR\Gamma_{\text{\rm red}}(\partial C_p,\ZZ).\]
\end{enumerate}   
\end{thm}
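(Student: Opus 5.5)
The plan is to start from the characterization already recorded after the definition of a proper space, via Corollary \ref{properspace}. $X$ is proper if and only if, for every closed point $x_0$ and every $p>x_0$, the natural morphism $\RR\Gamma_{x_0}(X,\ZZ_{\{p\}})\to\RR\Gamma(X,\ZZ_{\{p\}})$ is an isomorphism. Everything then reduces to computing both sides and the cone of this morphism using Lemma \ref{lemma}.

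First, I would place the morphism in the exact triangle of local cohomology:
\[ \RR\Gamma_{x_0}(X,\ZZ_{\{p\}})\to\RR\Gamma(X,\ZZ_{\{p\}})\to\RR\Gamma(X-\{x_0\},\ZZ_{\{p\}}). \]
This shows the morphism is an isomorphism if and only if $\RR\Gamma(X-\{x_0\},\ZZ_{\{p\}})=0$. Since $x_0<p$ and $x_0$ is closed, Lemma \ref{lemma} identifies this third term with $\RR\Gamma_{\text{\rm red}}(\partial C_p-\{x_0\},\ZZ)[-1]$. Its vanishing is exactly the cohomological triviality of $\partial C_p-\{x_0\}$, so this gives (1)$\Leftrightarrow$(2).

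For (3), Lemma \ref{lemma} gives $\RR\Gamma_{x_0}(X,\ZZ_{\{p\}})=\RR\Gamma_{x_0}(\partial C_p,\ZZ)[-1]$ and $\RR\Gamma(X,\ZZ_{\{p\}})=\RR\Gamma_{\text{\rm red}}(\partial C_p,\ZZ)[-1]$. So the morphism of Corollary \ref{properspace} becomes, after a shift, a natural morphism $\RR\Gamma_{x_0}(\partial C_p,\ZZ)\to\RR\Gamma_{\text{\rm red}}(\partial C_p,\ZZ)$. Condition (3) says precisely that this natural morphism is an isomorphism, which gives (1)$\Leftrightarrow$(3).

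The only point needing care, and the main obstacle, is checking that these identifications are compatible with the maps. The morphism $\RR\Gamma_{x_0}\to\RR\Gamma$ must correspond under Lemma \ref{lemma} to the composite $\RR\Gamma_{x_0}(\partial C_p,\ZZ)\to\RR\Gamma(\partial C_p,\ZZ)\to\RR\Gamma_{\text{\rm red}}(\partial C_p,\ZZ)$. I would verify this by applying the morphism of functors $\RR\Gamma_{x_0}(X,-)\to\RR\Gamma(X,-)$ to the short exact sequence $0\to\ZZ_{\{p\}}\to\ZZ_{C_p}\to\ZZ_{\partial C_p}\to0$, which yields a morphism of the two exact triangles used in the lemma. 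Note that $\RR\Gamma_{x_0}(X,\ZZ_{C_p})=0$ and $\RR\Gamma(X,\ZZ_{C_p})=\ZZ$. The connecting maps then identify the first triangle's term with $\RR\Gamma_{x_0}(\partial C_p,\ZZ)[-1]$ and the second's with the cone of $\ZZ\to\RR\Gamma(\partial C_p,\ZZ)$ shifted by $-1$. So the induced map is the natural one, by the naturality of the triangles. Alternatively, (2)$\Leftrightarrow$(3) can be seen directly from the local cohomology triangle on $\partial C_p$ combined with $\ZZ$. The statement "natural isomorphism" in (3) should be read as this natural map being an isomorphism.
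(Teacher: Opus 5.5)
Your proposal is correct and follows the paper's proof: you reduce properness to the isomorphism $\RR\Gamma_{x_0}(X,\ZZ_{\{p\}})\to\RR\Gamma(X,\ZZ_{\{p\}})$ via Corollary \ref{properspace}, then use the local cohomology triangle together with Lemma \ref{lemma} to get (1)$\Leftrightarrow$(2) and (1)$\Leftrightarrow$(3). Your extra step, checking that the Lemma \ref{lemma} identifications are compatible with the maps through a morphism of the triangles coming from $0\to\ZZ_{\{p\}}\to\ZZ_{C_p}\to\ZZ_{\partial C_p}\to 0$, makes explicit what the paper leaves implicit in ``by virtue of Lemma \ref{lemma}''.
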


\begin{proof} By the triangle of local cohomology, the canonical map  $\RR\Gamma_{x_0}(X,\ZZ_{\{p\}}) \to \RR\Gamma (X,\ZZ_{\{p\}})$ is an isomorphism if and only if   $\RR\Gamma(X-\{x_0\},\ZZ_{\{p\}})=0 $. By Lemma \ref{lemma}, this vanishing  is equivalent to  $\RR\Gamma_{\text{\rm red}}( \partial C_p-\{x_0\},\ZZ)=0$, establishing the equivalence between (1) and (2)..

Similarly, by virtue of Lemma \ref{lemma}, the map $\mathbb{R}\Gamma_{x_0}(X,\mathbb{Z}_{\{p\}}) \to \mathbb{R}\Gamma(X,\mathbb{Z}_{\{p\}})$ is an isomorphism if and only if the map $\mathbb{R}\Gamma_{x_0}(\partial C_p,\mathbb{Z}) \to \mathbb{R}\Gamma_{\text{\rm red}}(\partial C_p,\mathbb{Z})$ is an isomorphism, which proves the equivalence between (1) and (3).
\end{proof}

\begin{exs}{\rm \begin{enumerate}\item An immersion $f\colon X\hookrightarrow Y$ is proper if and only if it is a closed immersion.  Indeed, since any immersion satisfies condition (2) of Definition \ref{defn-proper} (Remark \ref{immersion}), $f$ is proper if and only if $f$ is c-proper, which corresponds precisely to being a closed immersion.
\item Every closed subset of a proper space is  proper.
\item If $f\colon X\to Y$ is a continuous map and $i\colon Y\hookrightarrow Z$ is a closed immersion, 
then $f$ is proper if and only if $i\circ f$ is proper.  Indeed, it is immediate that $f$ is c-proper if and only if $i\circ f$ is c-proper. Furthermore, for any two complexes $F,G\in \mathrm{D}(Y)$, we have $\RR\Hom_Y^\pun(F,G) = \RR\Hom_Z^\pun(i_*F,i_*G)$, because $i$ is a closed immersion. Hence $f$ satisfies condition (4) of Proposition \ref{equivalencias} if and only if $i\circ f$ does.
\end{enumerate}}
\end{exs}

\begin{prop}\label{composition} The composition of proper maps is proper.
\end{prop}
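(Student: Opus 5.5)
The plan is to verify the two conditions of Definition \ref{defn-proper} for $g\circ f$, where $f\colon X\to Y$ and $g\colon Y\to T$ are proper. Condition (1) is immediate, since the composition of c-proper maps is c-proper. For condition (2), I will use the characterization (3) of Proposition \ref{equivalencias}. Concretely, I must show that for every $(g\circ f)$-closed point $x_0$, with $y_0=f(x_0)$ and $t_0=g(y_0)$, the natural morphism
\[\RR\Hom_X^\pun(\ZZ_{\{x_0\}},G)\to\RR\Hom_T^\pun(\ZZ_{\{t_0\}},\RR(g\circ f)_*G)\]
is an isomorphism for every $G\in\mathrm{D}(X)$ supported in $U_{x_0}$. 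I will factor this morphism through $\RR\Hom_Y^\pun(\ZZ_{\{y_0\}},\RR f_*G)$.

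\emph{Step 1: closedness is inherited.}
\begin{itemize}
\item $x_0$ is $f$-closed. Indeed, $f^{-1}(y_0)\subseteq (g\circ f)^{-1}(t_0)$, and $x_0$ is closed in the larger set.
\item $y_0$ is $g$-closed. Suppose $y<y_0$ with $g(y)=t_0$. Since $f$ is c-proper, $f_{\vert C_{x_0}}\colon C_{x_0}\to C_{y_0}$ has nonempty fibres, hence is surjective. So there is some $x\le x_0$ with $f(x)=y$. Then $x\ne x_0$ and $(g\circ f)(x)=t_0$, contradicting that $x_0$ is $(g\circ f)$-closed.
\end{itemize}

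\emph{Step 2: $\RR f_*G$ is supported in $U_{y_0}$.} By base change (Theorem \ref{cohprop}), $(\RR f_*G)_y=\RR\Gamma(f^{-1}(y),G_{\vert f^{-1}(y)})$. This vanishes unless some $x\ge x_0$ satisfies $f(x)=y$, and such an $x$ gives $y=f(x)\ge y_0$.

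Now properness of $f$ at the $f$-closed point $x_0$ gives an isomorphism
\[\RR\Hom_X^\pun(\ZZ_{\{x_0\}},G)\to\RR\Hom_Y^\pun(\ZZ_{\{y_0\}},\RR f_*G).\]
Properness of $g$ at the $g$-closed point $y_0$, applied to $\RR f_*G$ (which is supported in $U_{y_0}$ by Step 2), gives an isomorphism
\[\RR\Hom_Y^\pun(\ZZ_{\{y_0\}},\RR f_*G)\to\RR\Hom_T^\pun(\ZZ_{\{t_0\}},\RR g_*\RR f_*G).\]

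\emph{Step 3: the composite is the natural map for $g\circ f$.} The remaining point, which I expect to be the main obstacle, is to check that the composite of these two isomorphisms equals the natural morphism for $g\circ f$. By construction, each natural morphism is obtained by applying $\RR f_*$ and then precomposing with $\ZZ_{\{y_0\}}\to\RR f_*\ZZ_{\{x_0\}}$. This last map is adjoint to the restriction $f^{-1}\ZZ_{\{y_0\}}=\ZZ_{f^{-1}(y_0)}\to\ZZ_{\{x_0\}}$, which exists because $x_0$ is closed in $f^{-1}(y_0)$.

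Compatibility therefore reduces to two facts:
\begin{itemize}
\item $\RR(g\circ f)_*=\RR g_*\RR f_*$;
\item the map $\ZZ_{\{t_0\}}\to\RR(g\circ f)_*\ZZ_{\{x_0\}}$ equals the composite
\[\ZZ_{\{t_0\}}\to\RR g_*\ZZ_{\{y_0\}}\to\RR g_*\RR f_*\ZZ_{\{x_0\}}.\]
\end{itemize}
The second fact follows from the transitivity of adjunctions, since the corresponding restriction maps compose as
\[\ZZ_{(gf)^{-1}(t_0)}\to\ZZ_{f^{-1}(y_0)}\to\ZZ_{\{x_0\}}.\]
I would write this check out explicitly, possibly via the transitivity isomorphism of Proposition \ref{transitivity}, and then conclude that $g\circ f$ is proper.
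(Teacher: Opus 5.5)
Your proposal is correct and follows the paper's proof essentially step by step. You prove c-properness of $g\circ f$, show that $x_0$ is $f$-closed and $y_0$ is $g$-closed (the paper appeals to closedness of $f$, which is the same surjectivity of $C_{x_0}\to C_{y_0}$ you use), show via base change that $\RR f_*G$ is supported in $U_{y_0}$, and chain the two isomorphisms from condition (3) of Proposition \ref{equivalencias}; your Step 3 additionally checks, via transitivity of adjunctions, that the composite of these isomorphisms is the natural morphism for $g\circ f$, a compatibility the paper leaves implicit, and that check is sound.
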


\begin{proof} Let $X\overset f\to Y\overset g\to Z$ be proper maps. The composition $g\circ f\colon X\to Z$ is c-proper, because both $f$ and $g$ are c-proper. To conclude, let $x_0$ be a  $(g\circ f)$-closed point. We must verify that the natural morphism
  \[ \RR\Hom_X^\pun(\ZZ_{\{x_0\}}, G)\to\RR\Hom_Z^\pun (\ZZ_{\{(g\circ f)(x_0))\}},\RR (g\circ f)_*G)\] is an isomorphism for every complex $G\in \mathrm{D}(X)$ supported in $U_{x_0}$.

Let us denote $y_0=f(x_0), z_0=g(f(x_0))$.  First, observe  that $x_0$ is $f$-closed. Since $f$ is proper, we have a natural isomorphism  $$ \RR\Hom_X^\pun(\ZZ_{\{x_0\}}, G)= \RR\Hom_Y^\pun(\ZZ_{\{y_0\}}, \RR f_*G).$$  Moreover,
 $y_0$ is $g$-closed  because $x_0$ is $(g\circ f)$-closed  and $f$ is closed. Additionally,  $\RR f_*G$ is supported in $U_{y_0}$: indeed, for every $y\in Y$, Theorem \ref{cohprop} implies
\[ (\RR f_* G)_y = \RR\Gamma (f^{-1}(y),G_{\vert f^{-1}(y)})\] which vanishes whenever $y\notin U_{y_0}$, since  $\supp (G)\subseteq U_{x_0}\subseteq f^{-1}(U_{y_0})$.
Thus,  since $g$ is proper, we obtain: 
\[ \RR\Hom_Y^\pun(\ZZ_{\{y_0\}}, \RR f_*G)=\RR\Hom_Z^\pun (\ZZ_{\{z_0\}},\RR g_* \RR f_*G).\] Combining these isomorphisms yields   $ \RR\Hom_X^\pun(\ZZ_{\{x_0\}}, G)=\RR\Hom_Z^\pun (\ZZ_{\{z_0\}},\RR (g\circ f)_*G)$, as required.

\end{proof}

The following proposition is straightforward  and its verification  is left to the reader.

\begin{prop} Let $f\colon X\to Y$ be a continuous map. The following conditions are equivalent:
\begin{enumerate} 
\item $f$ is proper.
\item There exists an open covering $Y=V_1\cup\dots\cup V_n$ such that the restriction $f\colon f^{-1}(V_i)\to V_i$ is proper for every $i=1,\dots,n$.
\item For every $y\in Y$, the map $f\colon f^{-1}(U_y)\to U_y$ is proper.
\item There exists a closed covering $Y=C_1\cup\dots\cup C_n$ such that $f\colon f^{-1}(C_i)\to C_i$ is proper for every $i=1,\dots,n$.
\item For every $y\in Y$, the map $f\colon f^{-1}(C_y)\to C_y$ is proper.
\item There exists a closed covering $X=X_1\cup\dots \cup X_m$ such that $f_{\vert X_i}\colon X_i\to Y$ is proper for every $i=1,\dots ,m$.
\item $f_{\vert C_p}\colon C_p\to C_{f(p)}$ is proper for every $p\in X$.
\item $f_{\vert C_g}\colon C_g\to C_{f(g)}$ is proper for every generic point $g\in X$.
\end{enumerate}
\end{prop}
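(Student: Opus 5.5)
Both defining conditions of properness are local on $Y$, so I will treat them one at a time. Condition (1), c-properness, is covered by Proposition \ref{cohprop-prop}, which lists exactly the same eight localizations. It therefore suffices to show that condition (2) of Definition \ref{defn-proper} localizes in the same way. For this I use the pointwise criterion (4') of Proposition \ref{equivalencias}. For an $f$-closed point $x_0$ with $y_0=f(x_0)$, the condition is that
\[\RR\Hom^\pun_{f^{-1}(U_{y_0})}(\ZZ_{\{x_0\}},\ZZ_{\{p\}})\to\RR\Hom^\pun_{U_{y_0}}(\ZZ_{\{y_0\}},\RR f_*\ZZ_{\{p\}})\]
is an isomorphism for all $p>x_0$. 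By excision, both sides depend only on $f^{-1}(U_{y_0})\to U_{y_0}$.

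\textbf{Open conditions (1)$\Leftrightarrow$(2)$\Leftrightarrow$(3).} Any open $V\ni y_0$ contains $U_{y_0}$. Moreover, $x_0$ is $f$-closed in $f^{-1}(V)$ if and only if it is $f$-closed in $X$, since the fibres are the same. Hence the condition at $x_0$ is literally identical for $f$ and for $f^{-1}(V)\to V$. This gives (1)$\Rightarrow$(3)$\Rightarrow$(2), and (2)$\Rightarrow$(1) because every $y_0$ lies in some $V_i$.

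\textbf{Closed conditions, (1)$\Rightarrow$(4),(5),(6),(7),(8).} Each of these restrictions is of the form $f\circ i$ with $i$ a closed immersion, or it is the base change to a closed subset $C\subseteq Y$. A closed immersion is proper by the Examples, so $f\circ i$ is proper by Proposition \ref{composition}. For a closed $C\subseteq Y$, the map $f^{-1}(C)\to Y$ is $f\circ i$ and hence proper, and by Example (3) properness of $f^{-1}(C)\to C$ is equivalent to properness of its composite with $C\hookrightarrow Y$. This gives (4),(5),(6),(7),(8). (Example (3) is stated for a closed immersion into the target and applies verbatim.)

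\textbf{Converses.} Three implications are immediate: (7)$\Rightarrow$(8), (5)$\Rightarrow$(4) (take the closures of the generic points), and (7)$\Rightarrow$(6). It remains to prove (4)$\Rightarrow$(1), (6)$\Rightarrow$(1) and (8)$\Rightarrow$(1). Here is the main obstacle: the compatibility condition at an $f$-closed $x_0$ must be recovered from data on pieces. My plan is to use the c-proper description in Proposition \ref{cohpropsup} together with Lemma \ref{lemma}. Given c-properness (already known from Proposition \ref{cohprop-prop}), the condition at $x_0$ and $p>x_0$ only involves the closed subset $C_p\subseteq X$ and $C_{f(p)}\subseteq Y$. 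The left side is $\RR\Gamma_{x_0}(\partial C_p,\ZZ)[-1]$. The right side is computed from $\RR f_*\ZZ_{\{p\}}=\ZZ_{\{f(p)\}}\otimes\RR\Gamma_{\rm red}(\partial G_p,\ZZ)[-1]$, which is intrinsic to $f_{\vert C_p}\colon C_p\to C_{f(p)}$. So the condition at $(x_0,p)$ for $f$ coincides with the condition for $f_{\vert C_p}$, provided $x_0$ is still $f_{\vert C_p}$-closed, which it is. This proves (7)$\Rightarrow$(1).

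The other converses reduce to (7). Under (6), choose $X_i\ni p$; then $C_p\subseteq X_i$ and $f_{\vert C_p}$ is a restriction of the proper map $f_{\vert X_i}$ to a closed subset, so it is proper. Under (4), $C_p\subseteq f^{-1}(C_i)$ for any $C_i\ni f(p)$, and we argue the same way. Under (8), pick a generic point $g\ge p$; then $f_{\vert C_p}$ is the restriction of $f_{\vert C_g}$ to the closed subset $C_p$, so it is proper. I expect the careful check that the naturality maps agree under these identifications to be the only delicate point.
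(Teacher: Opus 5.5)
Your proof is correct, and it is the natural verification the paper omits (it calls the proposition straightforward and leaves it to the reader). C-properness localizes by Proposition \ref{cohprop-prop}, and for an $f$-closed $x_0$ with $y_0=f(x_0)$ and $p>x_0$, the compatibility condition depends only on $f^{-1}(U_{y_0})\to U_{y_0}$ and on $f_{\vert C_p}\colon C_p\to C_{f(p)}$.

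The naturality check you flag in (7)$\Rightarrow$(1) is routine, and it needs neither Proposition \ref{cohpropsup} nor Lemma \ref{lemma}. The morphism \eqref{proper-condition} is the support-inclusion map $\RR\Gamma_{x_0}(f^{-1}(U_{y_0}),\ZZ_{\{p\}})\to\RR\Gamma_{f^{-1}(y_0)}(f^{-1}(U_{y_0}),\ZZ_{\{p\}})$, as in the proof of Theorem \ref{thmproperonfibres}. Since $\ZZ_{\{p\}}$ is pushed forward from $C_p$, adjunction for the closed immersion $C_p\hookrightarrow X$ identifies this map with the same map for $f_{\vert C_p}$. This holds because $f^{-1}(y_0)\cap C_p=(f_{\vert C_p})^{-1}(y_0)$ and $f^{-1}(U_{y_0})\cap C_p=(f_{\vert C_p})^{-1}(U_{y_0}\cap C_{f(p)})$.
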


\begin{cor} The following conditions are equivalent:
\begin{enumerate} \item $X$ is proper.
\item There exists a closed covering $X=X_1\cup\dots \cup X_m$ such that $ X_i$ is proper for every $i=1,\dots ,m$.
\item $ C_p$ is proper for every $p\in X$.
\item Every irreducible component of $X$ is proper.
\end{enumerate}
\end{cor}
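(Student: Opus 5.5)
The corollary is the special case $Y=\{*\}$ of the preceding proposition. Recall that $X$ is proper precisely when the projection $\pi\colon X\to\{*\}$ is proper. I will apply that proposition to $\pi$.

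First I translate each condition. For the point $*$, the only choices are $U_*=C_*=\{*\}$, and $\pi^{-1}(C_*)=X$. For any closed subset $X_i\subseteq X$, the restriction $\pi_{\vert X_i}$ is the structural map $X_i\to\{*\}$. So $\pi_{\vert X_i}$ is proper if and only if $X_i$ is proper. In the same way, $\pi_{\vert C_p}\colon C_p\to C_{\pi(p)}=\{*\}$ is proper if and only if $C_p$ is proper, and $\pi_{\vert C_g}$ is proper if and only if the irreducible component $C_g$ is proper.

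With these translations, conditions (1), (6), (7), (8) of the proposition become conditions (1), (2), (3), (4) of the corollary. Their equivalence is then immediate.

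The argument therefore rests entirely on the preceding proposition, whose proof the paper leaves to the reader. If I had to supply the one nontrivial direction, (2)$\Rightarrow$(1), directly for spaces, I would use Theorem \ref{thmproperspace}(2). Let $x_0$ be a closed point of $X$ and let $p>x_0$. Since $C_p$ is irreducible and $X=\bigcup X_i$ is a closed covering, $p$ lies in some $X_i$, and then $C_p\subseteq X_i$. The point $x_0$ is closed in $X_i$, and $\partial C_p$ computed in $X_i$ is the same as in $X$. So the cohomological triviality of $\partial C_p-\{x_0\}$ follows from the properness of $X_i$.

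For (1)$\Rightarrow$(3), the same criterion applies because the condition only involves $\partial C_q$ for $q\le p$; alternatively, it is example (2) above, since closed subsets of proper spaces are proper. The remaining implications (3)$\Rightarrow$(4)$\Rightarrow$(2) are trivial, taking the covering by irreducible components. I expect no real obstacle here.
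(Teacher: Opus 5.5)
Your proposal is correct and matches the paper. The paper states this corollary without proof, as the case $Y=\{*\}$ of the preceding proposition, using exactly the dictionary you set up. Your direct verification of $(2)\Rightarrow(1)$ and $(1)\Rightarrow(3)$ through the pointwise criterion of Theorem \ref{thmproperspace}(2) is also sound. It works because closed points and the sets $\partial C_p$ do not change when you pass to a closed subspace containing $p$, and it has the advantage of not depending on the proposition whose proof the paper leaves to the reader.
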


\begin{prop}\label{basechange0} Let $f\colon X\to Y$ be a proper map. For every finite space $T$, the induced map  $f\times1\colon X\times T  \to   Y\times T $ is proper. Consequently, the product of two proper spaces is proper.
\end{prop}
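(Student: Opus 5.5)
Set $g=f\times 1\colon X\times T\to Y\times T$. Being c-proper is stable under base change (stated after Definition \ref{c-proper}), and $g$ is the base change of $f$ along the projection $Y\times T\to Y$, so $g$ is c-proper. What remains is the compatibility condition \eqref{proper-condition} at every $g$-closed point.

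Take a $g$-closed point $(x_0,t_0)$. The fibre $g^{-1}(f(x_0),t_0)$ is $f^{-1}(f(x_0))\times\{t_0\}$, so $(x_0,t_0)$ is $g$-closed exactly when $x_0$ is $f$-closed; note that $t_0$ can be arbitrary. Fix $(p,t)\geq(x_0,t_0)$, so $p\ge x_0$ and $t\ge t_0$. Each $\RR\Hom$ can be computed by excision on small opens:
\[\RR\Hom^\pun_{X\times T}(\ZZ_{\{(x_0,t_0)\}},\ZZ_{\{(p,t)\}})=\RR\Gamma_{(x_0,t_0)}(U_{x_0}\times U_{t_0},\ZZ_{\{p\}}\boxtimes\ZZ_{\{t\}}).\]
The plan is to prove a Künneth formula for local cohomology at a product point. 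Write $\ZZ_{\{(x_0,t_0)\}}=\ZZ_{\{x_0\}}\boxtimes\ZZ_{\{t_0\}}$ and use the generators $\ZZ_{\{p\}}\boxtimes\ZZ_{\{t\}}$. The claim to establish is
\[\RR\Hom_{U_{x_0}\times U_{t_0}}(A\boxtimes B,A'\boxtimes B')\simeq \RR\Hom_{U_{x_0}}(A,A')\overset\LL\otimes\RR\Hom_{U_{t_0}}(B,B')\]
for the sheaves $\ZZ_{\{q\}}$. One way is to compute $\RR\Hom(\ZZ_{\{q\}},-)$ through the standard finite resolutions on finite posets, namely the complexes built from $\ZZ_{U_r}$, whose sections are stalks and which are therefore exact functors. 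The cleaner way is to use that $\RR\Gamma_{x}(U_x,\ZZ_{\{p\}})=\RR\Gamma_x(\partial C_p,\ZZ)[-1]$ (Lemma \ref{lemma}) together with the join/product formula for reduced cohomology of intervals. I expect this Künneth step to be the main technical point. The coefficients are only $\ZZ$ with free stalks and all spaces are finite, so there is no finiteness problem, and it suffices to check the isomorphism on bounded complexes of finite free groups.

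The right-hand side is handled in the same way. Base change (Theorem \ref{cohprop}) applied to the cartesian square gives $\RR g_*(\ZZ_{\{p\}}\boxtimes\ZZ_{\{t\}})=\RR f_*\ZZ_{\{p\}}\boxtimes\ZZ_{\{t\}}$; alternatively one can use Proposition \ref{cohpropsup}, since the relevant fibre $\partial G_{(p,t)}$ equals $\partial G_p\times\{t\}$. Applying the same Künneth formula then gives
\[\RR\Hom_{Y\times T}(\ZZ_{\{(f(x_0),t_0)\}},\RR g_*\ZZ_{\{(p,t)\}})=\RR\Hom_Y(\ZZ_{\{f(x_0)\}},\RR f_*\ZZ_{\{p\}})\overset\LL\otimes\RR\Hom_T(\ZZ_{\{t_0\}},\ZZ_{\{t\}}).\]
Naturality identifies the morphism \eqref{proper-condition} for $g$ with $(\text{morphism for }f)\otimes\mathrm{id}$. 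Since $f$ is proper, the morphism for $f$ is an isomorphism, and hence so is the morphism for $g$. This proves that $g$ is proper.

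For the consequence, take proper spaces $X$ and $Y$. The map $X\times Y\to Y$ is the map $(X\to\{*\})\times 1_Y$, which is proper by the first part. The map $Y\to\{*\}$ is proper by hypothesis. Their composition $X\times Y\to\{*\}$ is therefore proper by Proposition \ref{composition}, so $X\times Y$ is proper.
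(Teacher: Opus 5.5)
Your proposal is correct and follows essentially the paper's route. That route is: c-properness of $f\times 1$; writing the point sheaves as external products; the identity $\RR(f\times 1)_*(\ZZ_{\{p\}}\boxtimes\ZZ_{\{t\}})=\RR f_*\ZZ_{\{p\}}\boxtimes\ZZ_{\{t\}}$; a Künneth formula for $\RR\Hom$ of external products, which the paper simply cites as \cite[Prop.~4.23]{ST2} instead of proving; properness of $f$ at the $f$-closed point $x_0$; and composition for the product of spaces.

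One detail to tighten: on the $Y\times T$ side you apply Künneth with $\RR f_*\ZZ_{\{p\}}$ in the second slot rather than a single sheaf $\ZZ_{\{q\}}$. This is covered by either of two fixes:
\begin{itemize}
\item prove the formula for perfect first arguments and arbitrary second arguments; or
\item use Proposition \ref{cohpropsup} to write $\RR f_*\ZZ_{\{p\}}=\ZZ_{\{f(p)\}}\otimes_\ZZ\RR\Gamma_{\text{\rm red}}(\partial G_p,\ZZ)[-1]$.
\end{itemize}
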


\begin{proof} The map $f\times 1$ is c-proper, because $f$ is c-proper. To conclude, let $z_0=(x_0,t_0)$ be an $(f\times 1)$-closed point, and let $z=(x,t)$ be a point such that $z \geq z_0$. We must check that (setting $y_0=f(x_0)$) the canonical map
$$\RR\Hom_{X\times T}^\pun(\ZZ_{\{(x_0,t_0)\}}, \ZZ_{\{(x,t)\}})\to \RR\Hom_{Y\times T}^\pun(\ZZ_{\{(y_0,t_0)\}},\RR (f\times 1)_* \ZZ_{\{(x,t)\}})$$ is an isomorphism. For any two complexes $F\in \mathrm{D}(X),G\in \mathrm{D}(T)$, we denote their external tensor product by
\[ F\boxtimes G:=\pi_X^{-1}F\overset\LL\otimes\pi_T^{-1}G\] where $\pi_X\colon X\times T\to X$ and $\pi_T\colon X\times T\to T$ are the natural projections. We adopt the same notation  $F\boxtimes G$ for $F\in \mathrm{D}(Y)$ and $G\in \mathrm{D}(T)$. One has:
\[ \ZZ_{\{(x_0,t_0)\}} = \ZZ_{\{x_0\}}\boxtimes \ZZ_{\{t_0\}} \quad,\quad \ZZ_{\{(x,t)\}} = \ZZ_{\{x\}}\boxtimes \ZZ_{\{t\}}\] and
\[ \RR (f\times 1)_* \ZZ_{\{(x,t)\}}= \RR (f\times 1)_* (\ZZ_{\{x\}}\boxtimes \ZZ_{\{t\}}) =  \RR f_* \ZZ_{\{x\}}\boxtimes \ZZ_{\{t\}}.\]
 Then applying the results from \cite[Prop. 4.23]{ST2} for the external Hom expansions, we obtain:
\[\aligned  \RR\Hom_{X\times T}^\pun(\ZZ_{\{x_0\}}\boxtimes \ZZ_{\{t_0\}},  \ZZ_{\{x\}}\boxtimes \ZZ_{\{t\}}) &= \RR\Hom_{X}^\pun(\ZZ_{\{x_0\}},  \ZZ_{\{x\}})\overset\LL\otimes
\RR\Hom_{ T}^\pun(\ZZ_{\{t_0\}},   \ZZ_{\{t\}})
\\  (f\text{ is proper})& = \RR\Hom_{Y}^\pun(\ZZ_{\{y_0\}},  \RR f_*  \ZZ_{\{x\}})\overset\LL\otimes
\RR\Hom_{ T}^\pun(\ZZ_{\{t_0\}},   \ZZ_{\{t\}})
\\ & = \RR\Hom_{Y\times T}^\pun(\ZZ_{\{y_0\}}\boxtimes \ZZ_{\{t_0\}},\RR f_* \ZZ_{\{x\}}\boxtimes \ZZ_{\{t\}})
\\ & = \RR\Hom_{Y\times T}^\pun(\ZZ_{\{y_0\}}\boxtimes \ZZ_{\{t_0\}}, \RR (f\times 1)_* (\ZZ_{\{x\}}\boxtimes \ZZ_{\{t\}})
. \endaligned\] 
For the consequence, let $X,Y$ be proper spaces. Since $X$ is proper, the map   $X\times Y\to Y$ is proper by Proposition \ref{basechange0}. Since $Y$ is proper, the map $Y\to\{*\}$ is proper, and therefore the composition $X\times Y\to Y\to \{*\}$ is also proper by Proposition \ref{composition}, which means that the product space $X\times Y$ is proper. 
\end{proof}

\begin{cor}\label{properbasechange} Let $f\colon X\to Y$ be a proper map. For every base change $\overline Y\to Y$ locally of finite type, the induced map $X\times_Y\overline Y \to \overline Y $ is proper.
\end{cor}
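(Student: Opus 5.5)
The plan is to reduce the statement to Proposition \ref{basechange0}, the closed-immersion examples, and the locality of properness on the target. Let $g\colon\overline Y\to Y$ be locally of finite type and $\bar f\colon X\times_Y\overline Y\to\overline Y$ the induced map. Properness of $\bar f$ can be checked locally on $\overline Y$ (the Proposition listing equivalent conditions, item (3)). So fix $\bar y\in\overline Y$ and replace $\overline Y$ by $U_{\bar y}$. Then $\bar f^{-1}(U_{\bar y})=X\times_Y U_{\bar y}$, and we must show $X\times_Y U_{\bar y}\to U_{\bar y}$ is proper.

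By definition, $g_{\vert U_{\bar y}}$ factors as a closed immersion $i\colon U_{\bar y}\hookrightarrow U_{y}\times T$, with $y=g(\bar y)$, followed by the projection $\pi\colon U_y\times T\to U_y$. The map $f^{-1}(U_y)\to U_y$ is proper by the same locality proposition.

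\emph{Step 1.} Apply Proposition \ref{basechange0} to $f^{-1}(U_y)\to U_y$. This shows that $f\times 1\colon f^{-1}(U_y)\times T\to U_y\times T$ is proper. Note that $f^{-1}(U_y)\times T=X\times_Y (U_y\times T)$, so this handles base change along $\pi$.

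\emph{Step 2.} Base change along the closed immersion $i$. The fibre product $X\times_Y U_{\bar y}$ is the closed subset $(f\times1)^{-1}(i(U_{\bar y}))$ of $f^{-1}(U_y)\times T$, call it $Z$. Write $\bar f'\colon Z\to U_{\bar y}$ for the induced map. Then $i\circ\bar f'$ equals $(f\times 1)\circ k$, where $k\colon Z\hookrightarrow f^{-1}(U_y)\times T$ is the closed immersion.

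Closed immersions are proper (Examples (1)), and compositions of proper maps are proper (Proposition \ref{composition}). Hence $(f\times1)\circ k$ is proper. By Examples (3), $\bar f'$ is proper if and only if $i\circ\bar f'$ is proper. Hence $\bar f'$ is proper, which completes the argument.

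The main obstacle I expect is bookkeeping rather than mathematics. First, the fibre product of finite spaces must be taken as the subspace of the product, so that $X\times_Y U_{\bar y}$ really is a closed subspace of $f^{-1}(U_y)\times T$ and the identification $f^{-1}(U_y)\times T=X\times_Y(U_y\times T)$ holds. Second, the locality reduction requires the fibre product to commute with restriction to $U_{\bar y}$. Both follow from the fact that finite fibre products are computed on underlying posets with the induced order.
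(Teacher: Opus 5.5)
Your proposal is correct and follows the paper's own proof. Both reduce to the local situation $U_{\bar y}\to U_y$, factor the base change as a closed immersion into $U_y\times T$ followed by the projection, and handle the projection step with Proposition \ref{basechange0}; the one difference is that you justify the closed-immersion step (via $i\circ\bar f'=(f\times 1)\circ k$, Proposition \ref{composition}, and Examples (1) and (3)), which the paper only asserts.
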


\begin{proof} We are reduced to the case where $  \overline Y$ and $ Y$ are local and $  \overline Y\to Y$ is a local morphism (i.e., it maps the unique closed point to unique the closed point). In this case, $\overline Y\to Y$ factors as a closed immersion $\overline Y\to Y\times L$   followed by the natural projection  $Y\times L\to Y$. Since both types of base changes (closed immersions and projections) preserve properness, the assertion holds.
\end{proof}

\begin{thm}\label{thmproperonfibres} Let $f\colon X\to Y$ be a c-proper map. For each point $p\in X$, let    $X_p=f^{-1}f(p))$ denote the fibre of $f$ through $p$. %$G_p=f_p^{-1}(f(p))$, $\partial G_p=G_p-\{p\}$.  
The following conditions are equivalent: 
\begin{enumerate}
\item $f$ is proper.
\item $f$ has proper fibres, and for every $f$-closed point $x_0$ and every $p>x_0$ such that $f(p)>f(x_0)$, one has  
\[ \RR\Gamma_{\text{\rm red}}(\,(x_0,p),\ZZ)= \RR\Gamma_{\text{\rm red}}(\, (f(x_0),f(p)),\ZZ)\overset\LL\otimes  \RR\Gamma_{\text{\rm red}}(\partial G_p,\ZZ)[-1],\] where
\[ G_p=\text{\rm closure of } \{p\} \text{ \rm in } X_p\quad , \quad \partial G_p =G_p-\{p\}.\] 
\item $f$ has proper fibres, and for every $f$-closed point $x_0$ and every $p>x_0$ such that $f(p)>f(x_0)$, one has
\[\RR\Gamma_{(X_{x_0}\cap\partial C_p^{f(x_0)})-\{ x_0\}} (\partial C_p^{f(x_0)} -\{x_0\},\ZZ)=0,\] 
where
\[\aligned C_p^{f(x_0)}& =\text{ \rm closure of } \{p\} \text{ \rm in } f^{-1}(U_{f(x_0)}) =C_p\cap f^{-1}(U_{f(x_0)}) \\ \partial C_p^{f(x_0)} &=C_p^{f(x_0)}-\{p\}.\endaligned\] 
\end{enumerate}
\end{thm}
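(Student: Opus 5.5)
Throughout I use criterion (4') of Proposition \ref{equivalencias}: for each $f$-closed $x_0$ and $p>x_0$, with $y_0=f(x_0)$, the map
\[\alpha_p\colon \RR\Hom^\pun(\ZZ_{\{x_0\}},\ZZ_{\{p\}})\to\RR\Hom^\pun_{U_{y_0}}(\ZZ_{\{y_0\}},\RR f_*\ZZ_{\{p\}})\]
must be an isomorphism. The proof computes both sides and splits into two cases: $f(p)=y_0$ and $f(p)>y_0$.

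\emph{The source of $\alpha_p$.} Since $\RR\Hom(\ZZ_{\{x_0\}},-)=\RR\Gamma_{x_0}(U_{x_0},-)$, Lemma \ref{lemma} applied on $U_{x_0}$ gives
\[\RR\Gamma_{x_0}(\partial C_p\cap U_{x_0},\ZZ)[-1].\]
The space $\partial C_p\cap U_{x_0}$ is $[x_0,p)$, which is local with closed point $x_0$, and $[x_0,p)-\{x_0\}=(x_0,p)$. The local cohomology triangle therefore gives source $=\RR\Gamma_{\rm red}((x_0,p),\ZZ)[-2]$.

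\emph{The target of $\alpha_p$.} By Proposition \ref{cohpropsup}, $\RR f_*\ZZ_{\{p\}}=\ZZ_{\{f(p)\}}\otimes\RR\Gamma_{\rm red}(\partial G_p,\ZZ)[-1]$. If $f(p)>y_0$, the same computation on $U_{y_0}$ gives target $=\RR\Gamma_{\rm red}((y_0,f(p)),\ZZ)[-2]\overset\LL\otimes\RR\Gamma_{\rm red}(\partial G_p,\ZZ)[-1]$. This yields (1)$\Rightarrow$(2) for such $p$, up to a shift. It also yields the converse, provided one knows that an isomorphism of these objects forces $\alpha_p$ to be an isomorphism.

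\emph{The step I expect to be the main obstacle.} Condition (2) only states an abstract isomorphism, not that $\alpha_p$ is one. I would handle this by noting that the objects are perfect complexes of $\ZZ$-modules, so an abstract isomorphism does not directly give naturality. Instead I would argue via (3): it compares the relevant pieces by a natural map and is equivalent to (2) through a local cohomology triangle and a Künneth-type identification. The cone of $\alpha_p$ should be computed, after Lemma \ref{lemma}, as
\[\RR\Gamma_{(X_{x_0}\cap\partial C^{y_0}_p)-\{x_0\}}(\partial C^{y_0}_p-\{x_0\},\ZZ)\]
up to shift. The reason is that the map to $\RR\Gamma_{y_0}$ of the pushforward corresponds to sections supported on the fibre $X_{y_0}=f^{-1}(y_0)$, by base change (Theorem \ref{cohprop}) and c-properness, which identify $\RR\Gamma_{y_0}\RR f_*=\RR\Gamma_{f^{-1}(y_0)}$. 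This gives (1)$\Leftrightarrow$(3) for $f(p)>y_0$ cleanly, as $\alpha_p$ is precisely $\RR\Gamma_{x_0}\to\RR\Gamma_{X_{y_0}}$ on $f^{-1}(U_{y_0})$. The comparison (2)$\Leftrightarrow$(3) then follows from the identification of $\RR\Gamma_{X_{y_0}}$ of $\ZZ_{\{p\}}$ with the tensor product above via Proposition \ref{cohpropsup}. For the direction (2)$\Rightarrow$(3), I would use that the cone of $\alpha_p$ is exactly the obstruction, together with the rank/torsion of finitely generated cohomology: an endomorphism-type comparison of perfect complexes with isomorphic finitely generated cohomology groups is an isomorphism once it is surjective, and surjectivity would come from the long exact sequence. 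This is the delicate point.

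\emph{The case $f(p)=y_0$.} Here $p\in X_{y_0}$, and $\alpha_p$ is by base change the map $\RR\Gamma_{x_0}(X_{y_0},\ZZ_{\{p\}})\to\RR\Gamma(X_{y_0},\ZZ_{\{p\}})$. Since $x_0$ is closed in the fibre, Corollary \ref{properspace} applies: these maps being isomorphisms for all such $x_0,p$ is exactly properness of the fibres. It remains to check that every closed point of a fibre is $f$-closed, which holds by definition, and that every point of the fibre lies above some closed point of the fibre. Combining the two cases yields the theorem.
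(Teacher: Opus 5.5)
Your computations of the source and target of $\alpha_p$ are correct, and so is your identification of $\alpha_p$ with the natural map $\RR\Gamma_{x_0}\to\RR\Gamma_{X_{y_0}}$ on $f^{-1}(U_{y_0})$, whose cone yields (1)$\Leftrightarrow$(3). Your treatment of the case $f(p)=y_0$ is also correct. All of this matches the paper. Deriving properness of the fibres directly from $\alpha_p$ for $p\in X_{y_0}$ is a slightly more elementary substitute for the paper's appeal to Corollary \ref{properbasechange}.

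The gap is (2)$\Rightarrow$(1), and equally (2)$\Rightarrow$(3). You rightly observe that an abstract isomorphism $A_p\cong B_p$ between the source and target of $\alpha_p$ does not make $\alpha_p$ an isomorphism. Your proposed repair, however, does not work. The Hopfian fact (a surjection between isomorphic finitely generated abelian groups is bijective) needs each $H^i(\alpha_p)$ to be surjective. The long exact sequence of the triangle $A_p\xrightarrow{\alpha_p}B_p\to K_p$ gives this only if you already know that $H^i(B_p)\to H^i(K_p)$ vanishes. That is essentially the vanishing of the cone you are trying to prove. Nothing in an abstract isomorphism rules out, say, $\alpha_p$ being multiplication by $0$ or $2$ on $\ZZ[-d]$. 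So your proposal does not establish (2)$\Rightarrow$(1) or (2)$\Rightarrow$(3).

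The paper handles this step by simply reversing the chain of identifications used for (1)$\Rightarrow$(2). In other words, it reads the isomorphism in (2) as the one obtained from the natural morphism \eqref{proper-condition} via the canonical identifications \eqref{interval} and Proposition \ref{cohpropsup}. Under that reading, (2)$\Rightarrow$(1) is immediate. To complete your proof you must either adopt this reading explicitly, or give a genuine argument that the abstract isomorphism forces the natural one. The latter would need input you have not supplied.
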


\begin{proof} (1)$\Rightarrow$(2). Let us assume that  $f$ is proper. The fibres of $f$ are proper by Corollary  \ref{properbasechange}, because the base change $\{y\}\hookrightarrow Y$ is locally of finite type (it can be written as the composition of the closed immersion $\{y\}\hookrightarrow U_{y}$ followed by the open immersion $U_y\hookrightarrow Y$). Now,  we establish the second condition of (2).

Since $f$ is proper, the map
\[\RR\Hom_X^\pun(\ZZ_{\{x_0\}} , \ZZ_{\{p\}}) \overset{\eqref{proper-condition}}\longrightarrow \RR\Hom^\pun_Y(\ZZ_{\{f(x_0)\}} , \RR f_*\ZZ_{\{p\}})\] is an isomorphism for every $f$-closed point $x_0$ and every $p> x_0$. Moreover, by \cite[Prop. 2.5]{ST2}, we have
\begin{equation} \label{interval}  
\RR\Hom_X^\pun(\ZZ_{\{x_0\}} , \ZZ_{\{p\}})=\RR\Gamma_{\text{\rm red}}((x_0,p),\ZZ)[-2].
\end{equation}

 Now, if $f(p)>f(x_0)$, we obtain the following chain of isomorphisms:

\[\aligned \RR\Gamma_{\text{\rm red}}(\,(x_0,p),\ZZ)[-2]& \overset{\eqref{interval}}= \RR\Hom_{X}^\pun(\ZZ_{\{x_0\}} , \ZZ_{\{p\}}) \overset{\eqref{proper-condition}}= \RR\Hom^\pun_Y(\ZZ_{\{f(x_0))\}} , \RR f_*\ZZ_{\{p\}})
\\ &\overset{\ref{cohpropsup}}= \RR\Hom^\pun_Y(\ZZ_{\{y_0)\}} ,  \ZZ_{\{f(p)\}}) \overset\LL\otimes \RR\Gamma_{\text{\rm red}}(\partial G_p,\ZZ)[-1]\\ & \overset{\eqref{interval}}= \RR\Gamma_{\text{\rm red}}(\,(f(x_0),f(p)),\ZZ)[-2] \overset\LL\otimes \RR\Gamma_{\text{\rm red}}(\partial G_p,\ZZ)[-1]\endaligned
\] and shifting the complexes yields the desired relation.

Reversing the previous arguments,  we obtain that (2) implies that \eqref{proper-condition}  is an isomorphism, whenever   $f(p)>f(x_0)$. Next, let us verify that \eqref{proper-condition} remains an isomorphism when $f(p) = f(x_0)$.   In this setting, $x_0\in X_p$, and the result follows from the properness of the fibre $X_p$. Indeed, on the one hand,
\[ \aligned \RR\Hom_{X}^\pun(\ZZ_{\{x_0\}} , \ZZ_{\{p\}})&\overset{\eqref{interval}}=  \RR\Hom_{X_p}^\pun(\ZZ_{\{x_0\}} , \ZZ_{\{p\}})=\RR\Gamma_{x_0}(X_p,\ZZ_{\{p\}})\\ & \overset{\ref{lemma}}=\RR\Gamma_{x_0}(\partial G_p,\ZZ)[-1] \overset{\ref{thmproperspace} }=\RR\Gamma_{\text{\rm red}}(\partial G_p,\ZZ)[-1].\endaligned\]
On the other hand,
\[ \aligned \RR\Hom_Y^\pun(\ZZ_{\{f(x_0)\}},\RR f_*\ZZ_{\{p\}}) &\overset{\ref{cohpropsup} }=\RR\Hom_Y^\pun(\ZZ_{\{f(p)\}},\ZZ_{\{f(p)\}})\overset\LL\otimes \RR\Gamma_{\text{\rm red}}(\partial G_p,\ZZ)[-1]\\ &=\ZZ\overset\LL \otimes \RR\Gamma_{\text{\rm red}}(\partial G_p,\ZZ)[-1]=  \RR\Gamma_{\text{\rm red}}(\partial G_p,\ZZ)[-1] \endaligned\] which matches the first expression.

(1)$\Rightarrow$(3). We already know that $f$ has proper fibres, so it remains to show the second condition of (3).  By hypothesis, setting $y_0 = f(x_0)$, we have:
\[ \RR\Hom_X^\pun(\ZZ_{\{x_0\}} , \ZZ_{\{p\}}) \overset{\eqref{proper-condition}}= \RR\Hom^\pun_Y(\ZZ_{\{y_0\}} , \RR f_*\ZZ_{\{p\}}) \overset{\text{\rm adjunction}}= \RR\Hom_X^\pun (\ZZ_{X_{x_0}},\ZZ_{\{p\}}).\]  By excision, this induces an isomorphism
\[ \RR\Hom_{f^{-1}(U_{y_0})}^\pun(\ZZ_{\{x_0\}} , \ZZ_{\{p\}}) = \RR\Hom_{f^{-1}(U_{y_0})}^\pun (\ZZ_{X_{x_0}},\ZZ_{\{p\}})\] i.e., an isomorphism
\[ \RR\Gamma_{x_0}(f^{-1}(U_{y_0}), \ZZ_{\{p\}})= \RR\Gamma_{X_{x_0}}(f^{-1}(U_{y_0}), \ZZ_{\{p\}})\] or equivalently, the vanishing condition
\[ \RR\Gamma_{X_{x_0}-\{x_0\}}(f^{-1}(U_{y_0})-\{x_0\}, \ZZ_{\{p\}}) =0.\]
We conclude by noting  the isomorphism
\[ \RR\Gamma_{X_{x_0}-\{x_0\}}(f^{-1}(U_{y_0})-\{x_0\}, \ZZ_{\{p\}}) = \RR\Gamma_{(X_{x_0}\cap\partial C_p^{f(x_0)})-\{ x_0\}} (\partial C_p^{f(x_0)} -\{x_0\},\ZZ)\] which is proved in the same way as Lemma \ref{lemma}.

Finally, reversing these arguments, we deduce that (3) implies that \eqref{proper-condition} is an isomorphism whenever $f(p)>f(x_0)$. Moreover,  it is also an isomorphism when $f(p)=f(x_0)$ because the fibre $X_p$ is proper.
\end{proof}

\subsection{Properness on locally dualizable spaces.}\label{subsection-ld-properness}
\bigskip

The results concerning proper maps and spaces can be  significantly refined  under the locally dualizable hypothesis, due to the following facts on locally dualizable spaces:
\begin{enumerate} \item[(A)] For every $p\in X$  and every closed point $x_0\in\partial C_p$, one has
\[\RR\Gamma_{x_0}(\partial C_p,\ZZ)=\ZZ[-\codim (x_0,\partial C_p)].\]
\item[(B)] For every pair of points  $x<p$, the closed interval $[x,p]$ is pure and the open interval  $(x,p)$ is a cohomological sphere; that is \[\RR\Gamma_{\text{\rm red}}((x,p),\ZZ) = \ZZ[-\dim (x,p)].\]
\end{enumerate}

\begin{lem}\label{pure} Let $X$ be an irreducible dualizable finite space. The following conditions are equivalent:
\begin{enumerate}
\item $X$ is pure.
\item $\partial X$ is pure.
\item Every closed point of $X$ has codimension equal to $\dim X$.
\end{enumerate}
\end{lem}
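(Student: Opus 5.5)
Let $g$ be the generic point of $X$, so $X=C_g$ and $\partial X=C_g-\{g\}$. Since $X$ is irreducible and dualizable, it is locally dualizable (Theorem \ref{dualizableirreducible}), and hence catenary by Theorem \ref{locallydualizable}. Every maximal chain of $X$ ends at $g$, because $g$ is the maximum, and it begins at a closed point $x_0$. Catenarity forces every maximal chain from $x_0$ to $g$ to have length $\dim[x_0,g]=\dim U_{x_0}=\codim(x_0,X)$. Consequently:
\begin{itemize}
\item the lengths of the maximal chains of $X$ are exactly the numbers $\codim(x_0,X)$, as $x_0$ runs over the closed points;
\item every maximal chain of $\partial X$ is obtained from a maximal chain of $X$ by deleting $g$, and conversely every maximal chain of $X$, with $g$ removed, is a maximal chain of $\partial X$.
\end{itemize}
For the second point, note that a maximal chain of $\partial X$ ends at a maximal element of $\partial X$, and appending $g$ gives a maximal chain of $X$.

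With these two observations the three equivalences are direct.

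For (1)$\Leftrightarrow$(3): $X$ is pure if and only if every maximal chain has length $\dim X$. By the first observation, this holds if and only if $\codim(x_0,X)=\dim X$ for every closed point $x_0$.

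For (1)$\Leftrightarrow$(2): by the second observation, the maximal chains of $\partial X$ have lengths exactly one less than those of $X$. Also $\dim\partial X=\dim X-1$ whenever $\partial X\neq\emptyset$. So all maximal chains of $X$ have a common length if and only if the same is true in $\partial X$. The degenerate case $X=\{g\}$ is trivial.

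The only step requiring care is the use of catenarity to say that all maximal chains between $x_0$ and $g$ have the same length. This comes for free from Theorem \ref{locallydualizable}, once dualizability of the irreducible space $X$ has been converted into local dualizability via Theorem \ref{dualizableirreducible}. No cohomological input is needed beyond that.
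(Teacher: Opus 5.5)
Your proof is correct and follows essentially the paper's argument: catenarity of $X$ (via Theorem \ref{dualizableirreducible} and Theorem \ref{locallydualizable}) gives that every maximal chain from a closed point $x_0$ to $g$ has length $\codim(x_0,X)$, and (1)$\Leftrightarrow$(2) is the combinatorial correspondence between maximal chains of $X$ and of $\partial X$. The paper states these steps more tersely, treating (1)$\Rightarrow$(3) and (1)$\Leftrightarrow$(2) as immediate and invoking purity of $[x_0,g]$ only for (3)$\Rightarrow$(1).
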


\begin{proof} The equivalence between (1) and (2) is immediate, and the implication (1) $\Rightarrow $ (3) is also direct. Finally, (3) implies (1) because every the maximal chain starting at a closed point $x_0$ and ending at the generic point $g$ of $X$  have the exact same length, since the closed interval $[x_0,g]$ is pure by hypothesis.
\end{proof}

\begin{prop}\label{prop+irred} Let $X$ be a proper, irreducible, and dualizable space. Then $\partial X$ is pure and a cohomological sphere. 
\end{prop}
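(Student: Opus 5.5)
Let $g$ be the generic point of $X$, so that $X=C_g$ and $\partial X=\partial C_g$. Since $X$ is dualizable and irreducible, Theorem \ref{dualizableirreducible} shows it is locally dualizable. Facts (A) and (B) therefore hold, and $X$ is catenary by Theorem \ref{locallydualizable}.

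The plan is to combine the properness criterion of Theorem \ref{thmproperspace} with fact (A). First I treat the degenerate case $\dim X=0$: then $\partial X=\emptyset$, which is pure and a cohomological sphere of dimension $-1$ by convention. Otherwise fix a closed point $x_0\in X$; then $x_0<g$.

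By Theorem \ref{thmproperspace}, (3), applied to $p=g$, properness gives
\[\RR\Gamma_{x_0}(\partial X,\ZZ)\simeq\RR\Gamma_{\text{\rm red}}(\partial X,\ZZ).\]
By fact (A), the left-hand side is $\ZZ[-\codim(x_0,\partial X)]$. Hence
\[\RR\Gamma_{\text{\rm red}}(\partial X,\ZZ)=\ZZ[-c(x_0)],\qquad c(x_0):=\codim(x_0,\partial X).\]
Since the left side does not depend on $x_0$, the integer $c(x_0)=c$ is the same for all closed points of $X$, namely the unique degree in which $H^i_{\text{\rm red}}(\partial X,\ZZ)$ is nonzero.

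Next I turn this common codimension into purity. Every maximal chain in $X$ starts at some closed point $x_0$ and ends at $g$. Catenarity says $[x_0,g]$ is pure, so every such chain has length $\dim[x_0,g]=\codim(x_0,X)=c+1$, because the points of $\partial X$ above $x_0$ are exactly $[x_0,g)$. Since $c$ is independent of $x_0$, all maximal chains of $X$ have length $c+1$. Thus $X$ is pure of dimension $c+1$, and by Lemma \ref{pure} $\partial X$ is pure.

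It remains to see that $\dim\partial X=c$, which gives $\RR\Gamma_{\text{\rm red}}(\partial X,\ZZ)=\ZZ[-\dim\partial X]$. This holds because $\dim\partial X=\dim X-1=c$. The main point needing care is the bookkeeping that $\codim(x_0,\partial X)=\dim[x_0,g]-1$, together with the use of (A), which requires $x_0$ to be closed in $\partial C_g$. This is automatic, since $x_0$ is closed in $X$ and $x_0\neq g$.
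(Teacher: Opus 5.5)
Your proposal is correct and follows essentially the same route as the paper: you combine the properness criterion $\RR\Gamma_{x_0}(\partial X,\ZZ)\simeq\RR\Gamma_{\text{\rm red}}(\partial X,\ZZ)$ with fact (A) to show that $\codim(x_0,\partial X)$ does not depend on the closed point $x_0$, and from this you deduce purity and the cohomological sphere property. The only differences are in presentation: you treat the case $\dim X=0$ explicitly and spell out the catenarity argument underlying Lemma \ref{pure}, whereas the paper leaves both implicit.
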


\begin{proof} Since $X$ is proper, for every closed point $x_0\in\partial X$, there is a natural isomorphism
\[\RR\Gamma_{x_0}(\partial X,\ZZ)=\RR\Gamma_{\text{\rm red}}(\partial X,\ZZ).\]
Moreover, by property (A), we have $\RR\Gamma_{x_0}(\partial X,\ZZ)=\ZZ[-\codim (x_0,\partial X)]$. Combining these facts, we obtain:
\[ \ZZ[-\codim (x_0,\partial X)] = \RR\Gamma_{\text{\rm red}}(\partial X,\ZZ).\] This implies that the value of  $\codim (x_0,\partial X)$ is independent of the choice of $x_0$, which means that  $\codim (x_0,\partial X)=\dim\partial X$. Hence, $\partial X$ is pure (Lemma \ref{pure})   and a cohomological sphere.
\end{proof}

\begin{cor}\label{pro+irred2} Let $X$ be a proper and locally dualizable space. For every $p\in X$, the space $\partial C_p$ is pure and a cohomological sphere.
\end{cor}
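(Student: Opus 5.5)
The plan is to reduce to Proposition \ref{prop+irred} applied to the closed subspace $C_p$. Fix $p\in X$; if $p$ is closed, $\partial C_p=\emptyset$, which is pure of dimension $-1$ with $\RR\Gamma_{\text{\rm red}}(\emptyset,\ZZ)=\ZZ[1]=\ZZ[-\dim\emptyset]$, so it is a cohomological sphere. Assume therefore that $\partial C_p\neq\emptyset$.

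First, $C_p$ is proper. It is a closed subset of the proper space $X$, so this follows from the examples after Theorem \ref{thmproperspace} ("every closed subset of a proper space is proper"). Alternatively, one can use the corollary stating that $X$ is proper if and only if $C_p$ is proper for every $p$. One can also check it directly with Theorem \ref{thmproperspace}(2): closed points of $C_p$ are closed points of $X$, and for $q\in C_p$ the subset $\partial C_q$ is the same whether computed in $C_p$ or in $X$.

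Second, $C_p$ is irreducible, with generic point $p$, and dualizable. Indeed, $X$ is locally dualizable, so Corollary \ref{loc-dual-op} (3) gives that $C_x$ is dualizable for every $x$; in particular $C_p$ is dualizable.

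Finally, apply Proposition \ref{prop+irred} to the proper, irreducible, dualizable space $C_p$. It gives that $\partial C_p$ is pure and a cohomological sphere. Since $\partial(C_p)$ in the notation of that proposition is exactly $C_p-\{p\}=\partial C_p$, this is the claim.

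No step is a real obstacle. The only point needing a little care is making sure the inherited properness of $C_p$ is justified by results already stated earlier in the paper, and that the degenerate case of a closed point $p$ is consistent with the conventions $\dim\emptyset=-1$ and $\Sa^{-1}=\emptyset$.
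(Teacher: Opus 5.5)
Your proposal is correct and follows the paper's argument: the paper also notes that $C_p$ is proper and dualizable and then applies Proposition~\ref{prop+irred}. Your separate treatment of a closed point $p$, where $\partial C_p=\emptyset$, is a worthwhile addition. The proof of Proposition~\ref{prop+irred} needs a closed point $x_0\in\partial X$, so it says nothing about this degenerate case.
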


\begin{proof} Since $X$ is proper and locally dualizable, the closed subset  $C_p$ is proper and dualizable. The assertion then follows immediately from Proposition \ref{prop+irred}. 
\end{proof}

The next proposition shows that proper maps satisfy a compatibility property with respect to canonical complexes that is highly analogous to the behavior of proper morphisms of schemes.

\begin{prop}\label{proper-canonical} Let $f\colon X\to Y$ be a c-proper morphism between locally dualizable spaces. The folowing conditions are equivalent (adopting the abbreviated notation established in Remark\ref{notation}).
\begin{enumerate}
\item $f$ is proper.
\item For every closed point $y_0\in Y$, $f^\times \Omega$ is a canonical complex on $f^{-1}(U_{y_0})$, where $\Omega$ is a canonical complex on  $U_{y_0}$. 
\item For every $y\in Y$, $ f^\times \Omega$ is a canonical complex on $f^{-1}(U_y)$, where $\Omega$ is a canonical complex on  $U_y$. 

\end{enumerate}
\end{prop}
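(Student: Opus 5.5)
We will prove (1)$\Rightarrow$(3)$\Rightarrow$(2)$\Rightarrow$(1). The first move is to reduce everything to local spaces. By Proposition \ref{cohprop-prop} and Theorem \ref{cohprop}(3), c-properness and the formation of $f^\times$ are local on $Y$. By the locality of properness on $Y$ (the proposition preceding Corollary \ref{properbasechange}), $f$ is proper if and only if each $f\colon f^{-1}(U_y)\to U_y$ is proper. Being canonical is also preserved under restriction to open subsets (\cite[Thm. 4.5]{ST2}). Since $U_y\subseteq U_{y_0}$ for some closed point $y_0\le y$, we get $(f^\times\Omega)_{\vert f^{-1}(U_y)}=f^\times(\Omega_{\vert U_y})$, and this gives (2)$\Rightarrow$(3). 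So the real content is the equivalence, for $Y$ local with closed point $y_0$, between: $f$ proper, and $f^\times D_Y^{y_0}$ canonical on $X=f^{-1}(U_{y_0})$. Here we use Theorem \ref{dualizablelocal}, which says that $\Omega=D^{y_0}_{Y}[r]$, so we may take $\Omega=D_Y^{y_0}$.

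Being canonical is a local property on $X$ (it is checked through the reflexivity of the generators $\ZZ_{\{p\}}$, \cite[Prop. 4.4]{ST2}). Hence $f^\times D_Y^{y_0}$ is canonical if and only if its restriction to every $U_x$ is canonical. Every $U_x$ lies inside some $U_{x_0}$ with $x_0$ a closed point of $X$, and these are exactly the $f$-closed points over $y_0$, because $X=f^{-1}(U_{y_0})$. It therefore suffices to test on the $U_{x_0}$. Since $U_{x_0}$ is local and dualizable, Theorem \ref{dualizablelocal} shows that a complex on $U_{x_0}$ is canonical if and only if it is isomorphic to $D^{x_0}_{U_{x_0}}[r]$ for some $r$.

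For (1)$\Rightarrow$(2): properness makes the compatibility morphism \eqref{1} an isomorphism, so $(f^\times D_Y^{y_0})_{\vert U_{x_0}}\simeq D^{x_0}_{U_{x_0}}$, which is canonical. Thus $f^\times D_Y^{y_0}$ is canonical. This also covers closed points $y_0'\in Y$ in general, since $f$-closed points over the $y$ lying above $y_0$ are handled by applying the same argument to $U_y$.

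For (2)$\Rightarrow$(1), the main obstacle is to upgrade "abstractly isomorphic up to shift" to "the specific morphism $\phi_{x_0}$ of \eqref{1} is an isomorphism". We may assume $x_0$ is $f$-closed and, after the reduction, that $y_0=f(x_0)$. Set $\Omega'=(f^\times D_Y^{y_0})_{\vert U_{x_0}}$; by hypothesis it is canonical, hence $\Omega'\simeq D^{x_0}_{U_{x_0}}[r]$. By Remark \ref{remark}, $\phi_{x_0}$ is an isomorphism as soon as $\RR\Gamma_{x_0}(\phi_{x_0})$ is an isomorphism. So the plan is to compute
\[ \RR\Gamma_{x_0}(U_{x_0},\Omega')=\RR\Hom(\ZZ_{\{x_0\}},f^\times D_Y^{y_0})=\RR\Hom_{Y}(\RR f_*\ZZ_{\{x_0\}},D_Y^{y_0}).\]
By Proposition \ref{cohpropsup}, since $x_0$ is $f$-closed we have $\RR f_*\ZZ_{\{x_0\}}=\ZZ_{\{y_0\}}$, so this equals $\RR\Gamma_{y_0}(\ZZ_{\{y_0\}})^\vee=\ZZ$. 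This matches $\RR\Gamma_{x_0}(D^{x_0}_{U_{x_0}})=\ZZ$. It then remains to check that the induced map $\ZZ\to\ZZ$ is the identity. That map is the image under these adjunctions of the case $p=x_0$ of (4) in Proposition \ref{equivalencias}, which is already an isomorphism for c-proper $f$, as noted after Definition \ref{defn-proper}. Hence $r=0$, $\phi_{x_0}$ is an isomorphism, and $f$ is proper. Tracking this identification through the adjunction isomorphisms is the delicate step.
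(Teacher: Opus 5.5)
Your proposal is correct and follows essentially the paper's proof. You prove (1)$\Rightarrow$(2) by covering $f^{-1}(U_{y_0})$ with the $U_{x_0}$, on which $f^\times D^{y_0}_{U_{y_0}}$ restricts to the canonical complex $D^{x_0}_{U_{x_0}}$; (2)$\Rightarrow$(3) by locality of $f^\times$ and restriction of canonical complexes; and the converse via Remark \ref{remark}, where the ``delicate'' identification of $\RR\Gamma_{x_0}(\phi_{x_0})$ with the dual of the $p=x_0$ case of (4') is exactly what the proof of (2)$\Rightarrow$(4') in Proposition \ref{equivalencias} already supplies (only cosmetically, the chain you announce, (1)$\Rightarrow$(3)$\Rightarrow$(2)$\Rightarrow$(1), is not the one you actually prove, namely (1)$\Rightarrow$(2)$\Rightarrow$(3) and (2)$\Rightarrow$(1), with (3)$\Rightarrow$(2) trivial).
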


\begin{proof} First, recall that on a locally dualizable space, for every $x\in X$, the local dualizing complex $D_{U_x}^x$ is a canonical complex on $U_x$  and it is unique up to shift.

(1) $\Rightarrow$ (2). Let $\Omega $ be a canonical complex on $U_{y_0}$. Since $U_{y_0}$ is local, we may asssume without loss of generality that $\Omega=D_{U_{y_0}}^{y_0}$. Since $f$ is proper, for every closed point $x_0\in f^{-1}(U_{y_0})$, one has $ (f^\times \Omega)_{\vert U_{x_0}}=D_{U_{x_0}}^{x_0}$, which is a canonical complex on $U_{x_0}$  because  $X$ is locally dualizable.  Since the open set $f^{-1}(U_{y_0})$ is covered by these open subsets $U_{x_0}$, it follows that $f^\times \Omega$ is indeed a canonical complex on $f^{-1}(U_{y_0})$.

(2) $\Rightarrow$ (3). For every $y\in Y$, let $y_0\leq y$ be a closed point of $Y$ and denote: $$\Sigma_y=f^\times D_{U_y}^y\quad ,\quad \Sigma_{y_0}=f^\times D_{U_{y_0}}^{y_0}.$$   By \cite[Thm. 4.16]{ST2}, we have 
\[(D_{U_{y_0}}^{y_0})_{\vert U_y}=D_{U_y}^y[r],\quad r=\dim [y_0,y]\] and then, since $f^\times $ is local on $Y$ (Thm. \ref{cohprop}), we get: 
\[ (\Sigma_{y_0})_{\vert f^{-1}(U_y)}= \Sigma_y[r].\] Let $x\in f^{-1}(U_y)$ be a closed point and let $x_0\leq x$ be a closed point of $f^{-1}(U_{y_0})$. By the hypothesis in (2), we know that $(\Sigma_{y_0})_{\vert U_{x_0}}=D_{U_{x_0}}^{x_0}$. Therefore,
\[(\Sigma_y)_{\vert U_x}=(\Sigma_{y_0}[-r])_{\vert U_x} = (D_{U_{x_0}}^{x_0}[-r])_{\vert U_x} = D_{U_x}^x[s-r]\] where $s=\dim[x_0,x]$, and the last isomorphism is again due to \cite[Thm. 4.16]{ST2}. Thus, $\Sigma_y$ is a canonical complex on $f^{-1}(U_y)$, because its restriction to every $U_x$ (where $x$ ranges over the  closed points of $f^{-1}(U_y)$) is canonical.

To conclude, we establish  (3) $\Rightarrow$ (1).  By hypothesis, $\Sigma=f^\times D_{U_y}^y$ is a canonical complex on $f^{-1}(U_y)$, which implies that $\Sigma_{\vert U_{x_0}}$ is a canonical complex on $U_{x_0}$. The morphism $\Sigma_{\vert U_{x_0}}\to D_{U_{x_0}}^{x_0}$ is an isomorphism because it becomes one  after applying functor  $\RR\Hom^\pun_{U_{x_0}}(\ZZ_{\{x_0\}},-)$ (see Remark \ref{remark}).
\end{proof}

\begin{cor}\label{dualizante=canonico} Let  $X$ be a locally dualizable space. Then, $X$ is proper if and only if the global dualizing complex  $D_X$ is a canonical complex (which in turn implies that  $X$ is dualizable).
\end{cor}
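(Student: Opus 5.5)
This is the special case $Y=\{*\}$ of Proposition \ref{proper-canonical}, and the plan is to make that specialization explicit. Let $f\colon X\to\{*\}$ be the structural projection. It is always c-proper, and $\{*\}$ is trivially locally dualizable, with canonical complex $\ZZ=D^{*}_{\{*\}}$. By definition $f^\times\ZZ=D_X$. So once the simplified notation of Remark \ref{notation} is unwound, the hypotheses of Proposition \ref{proper-canonical} hold with $U_{y_0}=\{*\}$ and $f^{-1}(U_{y_0})=X$.

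Condition (2) of Proposition \ref{proper-canonical} then says that $f^\times\Omega$ is a canonical complex on $X$, where $\Omega$ is any canonical complex on the point. A canonical complex on $\{*\}$ is a shift $\ZZ[r]$; either by uniqueness (\cite[Thm. 4.8]{ST2}, a point being connected) or by Theorem \ref{dualizablelocal}, it differs from $D^{*}_{\{*\}}=\ZZ$ only by a shift. Because $f^\times$ commutes with shifts, $f^\times\Omega=D_X[r]$. Hence condition (2) is equivalent to $D_X$ being a canonical complex, since shifts of canonical complexes are canonical.

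It remains to match condition (1), properness of $f$, with properness of $X$; this holds by the definition of a proper space. The parenthetical claim that $X$ is then dualizable is just the definition of dualizable: a canonical complex, namely $D_X$, exists.

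If one prefers not to rely on Proposition \ref{proper-canonical} in this degenerate case, the direct argument is equally short. For $(\Rightarrow)$: properness gives $(D_X)_{\vert U_{x_0}}\simeq D^{x_0}_{U_{x_0}}$ for every closed point $x_0$. Each of these is canonical on $U_{x_0}$ by Theorem \ref{dualizablelocal} and local dualizability. Since the $U_{x_0}$ cover $X$ and being canonical is a local condition, $D_X$ is canonical. For $(\Leftarrow)$: $(D_X)_{\vert U_{x_0}}$ is canonical on $U_{x_0}$, so Remark \ref{remark} reduces the claim to showing that the compatibility morphism of Corollary \ref{localglobalcompatibility} becomes an isomorphism after applying $\RR\Gamma_{x_0}$. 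I expect this step to be the main obstacle. To handle it, I would identify $\RR\Gamma_{x_0}(U_{x_0},(D_X)_{\vert U_{x_0}})=\RR\Hom_X(\ZZ_{\{x_0\}},D_X)=\RR\Gamma(X,\ZZ_{\{x_0\}})^\vee$. Since $x_0$ is closed, $\partial C_{x_0}=\emptyset$, so Lemma \ref{lemma} gives $\RR\Gamma(X,\ZZ_{\{x_0\}})=\RR\Gamma_{\rm red}(\emptyset,\ZZ)[-1]=\ZZ$. The corresponding identification for $D^{x_0}_{U_{x_0}}$ is also $\ZZ$. One then checks that the map between them is induced by the identity of $\ZZ_{\{x_0\}}$, hence is an isomorphism. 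This is exactly the (3)$\Rightarrow$(1) step of Proposition \ref{proper-canonical}.
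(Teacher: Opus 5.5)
Your proposal is correct and follows the paper. The paper states this corollary without a separate proof, as the case $Y=\{*\}$ of Proposition \ref{proper-canonical}, via $f^\times\ZZ=D_X$ and the fact that canonical complexes on a point are shifts of $\ZZ$. Your supplementary direct argument is the (1)$\Rightarrow$(2) and (3)$\Rightarrow$(1) steps of that proposition written out for $Y$ a point, and your computation showing that $\RR\Gamma_{x_0}$ of the compatibility morphism is an isomorphism $\ZZ\to\ZZ$ is correct.
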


\begin{cor}\label{c-proper=proper} Let $X$ and $Y$ be proper and dualizable. Every c-proper map $f\colon X\to Y$ is proper.
\end{cor}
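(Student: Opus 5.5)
The plan is to use the criterion of Proposition \ref{proper-canonical}: since $X$ and $Y$ are proper and dualizable, they are locally dualizable. So $f$ is proper as soon as, for every $y\in Y$, $f^\times\Omega$ is a canonical complex on $f^{-1}(U_y)$, where $\Omega$ is a canonical complex on $U_y$. The natural candidates are the global dualizing complexes. By Corollary \ref{dualizante=canonico}, $D_Y$ is a canonical complex on $Y$ and $D_X$ is a canonical complex on $X$, because both spaces are proper and locally dualizable.

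The first step is to relate $f^\times D_Y$ to $D_X$. Let $\pi_X\colon X\to\{*\}$ and $\pi_Y\colon Y\to\{*\}$ be the structural maps. Then $\pi_X=\pi_Y\circ f$, so by transitivity (Proposition \ref{transitivity}, with $Z=X$, $W=Y$) we get
\[ f^\times D_Y=f^\times\pi_Y^\times\ZZ=\pi_X^\times\ZZ=D_X. \]
Hence $f^\times D_Y$ is a canonical complex on $X$.

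The second step localizes over $U_y$. Since $f$ is c-proper, $f^\times$ is local on $Y$ (Theorem \ref{cohprop}, (3)), so
\[ f^\times\bigl((D_Y)_{\vert U_y}\bigr)=(f^\times D_Y)_{\vert f^{-1}(U_y)}=(D_X)_{\vert f^{-1}(U_y)}. \]
The right-hand side is the restriction of a canonical complex to an open subset, so it is canonical on $f^{-1}(U_y)$ by \cite[Thm. 4.5]{ST2}. Likewise $\Omega:=(D_Y)_{\vert U_y}$ is a canonical complex on $U_y$. This verifies condition (3) of Proposition \ref{proper-canonical} for this particular $\Omega$.

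The main obstacle is that condition (3) is phrased for ``a'' canonical complex $\Omega$ on $U_y$, and its proof in the direction (3)$\Rightarrow$(1) uses $\Omega=D^y_{U_y}$. On the local space $U_y$, Theorem \ref{dualizablelocal} gives $(D_Y)_{\vert U_y}\simeq D^y_{U_y}[r]$ for some $r$, using that $U_y$ is connected. Since $f^\times$ commutes with shifts, $f^\times D^y_{U_y}=\bigl(f^\times (D_Y)_{\vert U_y}\bigr)[-r]$, which is again canonical. Proposition \ref{proper-canonical} then yields that $f$ is proper. Alternatively, one can invoke Remark \ref{remark} directly on each $U_{x_0}$, as in the proof of (3)$\Rightarrow$(1) there.
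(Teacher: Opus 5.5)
Your proof is correct and follows the same route as the paper. Both arguments get $D_Y$ and $D_X=f^\times D_Y$ canonical from Corollary \ref{dualizante=canonico} and transitivity, then conclude with Proposition \ref{proper-canonical}. You additionally spell out the steps the paper leaves implicit: the restriction to $f^{-1}(U_y)$ via Theorem \ref{cohprop}(3), and the shift ambiguity $(D_Y)_{\vert U_y}\simeq D^y_{U_y}[r]$ from Theorem \ref{dualizablelocal}.
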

\begin{proof} Since $Y$ is proper and dualizable, $D_Y$ is a canonical complex on $Y$.  Because $X$ is also proper and dualizable, $f^\times D_Y=D_X$ is a canonical complex on $X$. The result follows directly from Proposition \ref{proper-canonical}.
\end{proof}

The next result illustrates  the abundance of proper subsets within any locally dualizable space.

\begin{cor}\label{abundance} Let $X$ be a local space  with closed point $\0$. If $X$ is dualizable, then $X^*$ is proper. Consequently, for every locally dualizable space $X$ and every point $p\in X$, the open subspace $U_p^*$ is proper.
\end{cor}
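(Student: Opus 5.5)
We need to prove that for a local dualizable space $X$ with closed point $\0$, the open subspace $X^*$ is proper. The natural route is Theorem \ref{thmproperspace}, criterion (2). For every closed point $x_0$ of $X^*$ and every $p>x_0$ in $X^*$, we must show that $\partial C^*_p-\{x_0\}$ is cohomologically trivial, where $\partial C^*_p=\partial C_p\cap X^*$ is computed in $X^*$.

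Work inside $X$. Here $C_p$ (closure in $X$) contains $\0$, and $\partial C_p^* -\{x_0\}=\partial C_p-\{\0,x_0\}$. Since $X$ is dualizable it is locally dualizable, so by Theorem \ref{locallydualizable} it is catenary and every open interval is a cohomological sphere. The set $\partial C_p-\{\0,x_0\}$ equals $(\0,p)-\{x_0\}$. Moreover $x_0$ is a closed point of $X^*$, i.e.\ an element covering $\0$, so it is a minimal point of the interval $(\0,p)$.

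The key step is the following: in the open interval $S=(\0,p)$, which is a cohomological sphere of dimension $d=\dim S$, removing a minimal point $x_0$ leaves a cohomologically trivial space. I would use the open cover of $S$ given by $S-\{x_0\}$ (which is open, since $x_0$ is minimal and its complement is an upper set) and $U_{x_0}\cap S=[x_0,p)$. The latter is contractible, having minimum $x_0$. The two pieces intersect in $U_{x_0}^*\cap S=(x_0,p)$, which is again a cohomological sphere, of dimension $d-1$: purity of $[\0,p]$ together with $x_0$ covering $\0$ forces $\dim(x_0,p)=\dim(\0,p)-1$.

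Mayer--Vietoris in reduced cohomology then gives an exact triangle
\[
\RR\Gamma_{\text{\rm red}}(S,\ZZ)\to \RR\Gamma_{\text{\rm red}}(S-\{x_0\},\ZZ)\oplus \RR\Gamma_{\text{\rm red}}([x_0,p),\ZZ)\to \RR\Gamma_{\text{\rm red}}((x_0,p),\ZZ)\to.
\]
In it, $\RR\Gamma_{\text{\rm red}}([x_0,p),\ZZ)=0$, the first term is $\ZZ[-d]$, and the third term is $\ZZ[-(d-1)]$. The connecting map $\ZZ[-(d-1)][-1]\to$ ... must be an isomorphism; this is the main obstacle. Rather than check it directly, I would deduce it from local cohomology via Lemma \ref{lemma}. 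The triangle $\RR\Gamma_{x_0}(S)\to\RR\Gamma(S)\to\RR\Gamma(S-\{x_0\})$ and excision give $\RR\Gamma_{x_0}(S,\ZZ)=\RR\Gamma_{x_0}(U_{x_0}\cap S,\ZZ)=\RR\Gamma_{\text{\rm red}}((x_0,p),\ZZ)[-1]=\ZZ[-d]$, computed as in the proof of Corollary \ref{Gor}.

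So we must show that $\RR\Gamma_{x_0}(S,\ZZ)\to\RR\Gamma_{\text{\rm red}}(S,\ZZ)$ is an isomorphism. Since both sides are $\ZZ[-d]$, this amounts to surjectivity in degree $d$. One route is via fact (A) and the duality pairing: on the dualizable (indeed Cohen--Macaulay) space $\partial C_p$, the map corresponds to restricting the canonical sheaf to the generic points. Alternatively, I would induct on $d$: the case $d=0$ is clear, since $S$ is then two points and removing one leaves a point. For the inductive step, I would use the fact that a top-dimensional class of a cohomological sphere is supported on every maximal simplex. I expect this step to require the most care.

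Finally, the consequence follows by applying the first statement to $U_p$, which is local with closed point $p$ and dualizable by local dualizability; here $(U_p)^*=U_p^*$.
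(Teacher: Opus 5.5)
Your reduction is correct as far as it goes. Criterion (2) of Theorem \ref{thmproperspace} does reduce the claim to showing that $S-\{x_0\}$ is cohomologically trivial for $S=(\0,p)$ with $x_0$ minimal in $S$, and your computations $\RR\Gamma_{x_0}(S,\ZZ)\simeq\ZZ[-d]\simeq\RR\Gamma_{\text{\rm red}}(S,\ZZ)$ are right. The gap is the step you leave open: that the natural map $H^d_{x_0}(S,\ZZ)\to H^d(S,\ZZ)$ is surjective. That step is the whole content of the statement, and the proposal does not prove it.

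Knowing that both groups are $\ZZ$ says nothing about the map. Take the face poset of a graph made of a cycle with two extra edges $\{a,x_0\}$ and $\{x_0,w\}$ attached at a vertex $a$ of the cycle. It is a cohomological $1$-sphere, and $H^1_{x_0}=\ZZ$ because $x_0$ lies on exactly two edges. Yet $H^1_{x_0}\to H^1$ is zero and $S-\{x_0\}$ is disconnected. What rules this out in your situation is the local sphere condition at \emph{every} point of $S$ (in the example it fails at $w$), so a global argument is required. Neither of your suggested routes supplies one:
\begin{enumerate}
\item The ``fact'' that a top class of a cohomological sphere is supported on every maximal simplex is false in general (same example). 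Under the local conditions it is as hard as the corollary itself.
\item The duality route only works once you know that the global dualizing complex of $S$ is $\ZZ[d]$, i.e.\ canonical. That is essentially the properness you are trying to prove.
\end{enumerate}

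The missing idea is to compare dualizing complexes directly rather than cohomology groups. Let $j\colon X^*\hookrightarrow X$ be the inclusion. Dualize the triangle $\RR\Gamma_\0(X,F)\to\RR\Gamma(X,F)\to\RR\Gamma(X^*,F)$, using $D_X=\ZZ_{\{\0\}}$ for $X$ local. This gives the triangle $\RR j_*D_{X^*}\to\ZZ_{\{\0\}}\to D_X^\0$, and restricting to $X^*$ yields $D_{X^*}\simeq(D_X^\0)_{\vert X^*}[-1]$. Now $D_X^\0$ is canonical on $X$ (Theorem \ref{dualizablelocal}), and canonical complexes restrict to canonical complexes on open subsets. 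Hence $D_{X^*}$ is canonical on the locally dualizable space $X^*$, and Corollary \ref{dualizante=canonico} gives properness; this is the paper's proof.

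Alternatively, criterion (2) of Theorem \ref{proper+dualizable} applies at once, and its proof does not use this corollary. For $q\in X^*$, the boundary $\partial C_q$ computed in $X^*$ is $(\0,q)$, which is pure and a cohomological sphere by Theorem \ref{locallydualizable}. The proof of (2)$\Rightarrow$(1) there is exactly the global duality argument your key step lacks. Your final deduction for $U_p^*$ is fine.
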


\begin{proof} Let $j\colon X^*\hookrightarrow X$ be the open immersion. From the exact triangle (see \cite[(3.6.2)]{ST2})
\[ \RR j_*D_{X^*}\to\ZZ_{\{\0\}}\to D_X^\0\] we obtain by restriction that $D_{X^*}= (D_X^\0)_{\vert X^*}[-1].$ Hence, $D_{X^*}$ is a canonical complex on $X^*$, and the claim follows from Corollary \ref{dualizante=canonico}.
\end{proof}

The following theorem gives a cohomological characterization of proper and dualizable spaces.

\begin{thm}\label{proper+dualizable} Let $X$ be a locally dualizable space. The following conditions are equivalent:
\begin{enumerate}
\item $X$ is proper.
\item For every $p\in X$,  the space  $\partial C_p$ is pure and  a cohomological sphere.
\item For every $p\in X$, the space $\partial C_p $ is pure and $\partial C_p-\{x_0\}$ is cohomologically trivial for some closed point $x_0\in\partial C_p$.
\end{enumerate}
\end{thm}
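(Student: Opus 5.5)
We prove (1)$\Rightarrow$(2)$\Rightarrow$(3)$\Rightarrow$(1). The first implication is exactly Corollary \ref{pro+irred2}. The real content is the passage from (3), which only involves one closed point in each $\partial C_p$, back to properness, which requires information at \emph{every} closed point $x_0<p$.

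For (2)$\Rightarrow$(3), fix $p$ and any closed point $x_0\in\partial C_p$. By Theorem \ref{thmproperspace} (applied to the space $C_p$; note that $\partial C_p-\{x_0\}$ and the relevant local cohomology only depend on $C_p$), it is enough to show that the natural map
\[\RR\Gamma_{x_0}(\partial C_p,\ZZ)\to\RR\Gamma_{\text{\rm red}}(\partial C_p,\ZZ)\]
is an isomorphism. Property (A) gives $\RR\Gamma_{x_0}(\partial C_p,\ZZ)=\ZZ[-\codim(x_0,\partial C_p)]$. Since $\partial C_p$ is pure, $\codim(x_0,\partial C_p)=\dim\partial C_p$. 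By (2), $\RR\Gamma_{\text{\rm red}}(\partial C_p,\ZZ)=\ZZ[-\dim\partial C_p]$. So both sides are $\ZZ$ in the same degree, and it remains to see that the map is an isomorphism rather than, say, multiplication by some $n$. Equivalently, we need $\RR\Gamma_{\text{\rm red}}(\partial C_p-\{x_0\},\ZZ)=0$.

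I expect this to be the main obstacle. The cone of the map is $\RR\Gamma_{\text{\rm red}}(\partial C_p-\{x_0\},\ZZ)$, by the local cohomology triangle as in Lemma \ref{lemma}. Its cohomology is concentrated in degrees $\dim\partial C_p-1$ and $\dim\partial C_p$, and there it is the kernel and cokernel of multiplication by some $n$ on $\ZZ$. I would settle this by an induction on $\dim C_p$, in which the equivalence (1)$\Leftrightarrow$(2) is already known for all $C_q$ with $q<p$. Under this hypothesis $\partial C_p$ is proper, by the closed covering corollary, since each $C_q\subseteq\partial C_p$ is proper. Hence $D_{\partial C_p}$ is canonical by Corollary \ref{dualizante=canonico}, and $\partial C_p$ is dualizable. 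Then the morphism $D_{\partial C_p}\to D^{x_0}_{U_{x_0}\cap\partial C_p}$ is a morphism between canonical complexes on a local space, and Remark \ref{remark} reduces the question to the degree matching we already have, plus generation: the global fundamental class must map to a generator. For that last point I would try the alternative route of using Proposition \ref{partial X}, which says $\partial C_p$ has canonical complex $\ZZ$, so that $\partial C_p$ is Cohen--Macaulay with trivial canonical sheaf. Duality on the closed point $x_0$ then identifies $H^{\dim}_{x_0}$ with the dual of $\ZZ_{x_0}$, and this forces $n=\pm1$.

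For (3)$\Rightarrow$(1), fix $p$ and a closed point $x_0'\in\partial C_p$ with $\partial C_p-\{x_0'\}$ cohomologically trivial. Then the map $\RR\Gamma_{x_0'}(\partial C_p,\ZZ)\to\RR\Gamma_{\text{\rm red}}(\partial C_p,\ZZ)$ is an isomorphism. Purity and (A) give $\RR\Gamma_{x_0'}(\partial C_p,\ZZ)=\ZZ[-\dim\partial C_p]$, hence $\RR\Gamma_{\text{\rm red}}(\partial C_p,\ZZ)=\ZZ[-\dim\partial C_p]$, so (2) holds at $p$. To conclude properness, Theorem \ref{thmproperspace} requires $\partial C_p-\{x_0\}$ to be cohomologically trivial for \emph{every} closed point $x_0<p$, and this is exactly what the (2)$\Rightarrow$(3) step establishes, for all $x_0$, with the same induction on $\dim C_p$.
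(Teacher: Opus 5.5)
The parts that match the paper are fine. Your (3)$\Rightarrow$(2) step at each $p$ is the paper's argument. Your induction, which makes $\partial C_p$ proper with $D_{\partial C_p}$ canonical, uses the same ingredients as the paper's (2)$\Rightarrow$(1).

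The gap is at the step you flag yourself: showing that $H^d_{x_0}(\partial C_p,\ZZ)\to H^d(\partial C_p,\ZZ)$, with $d=\dim\partial C_p$, is $\pm1$ and not multiplication by some $n$. What you offer does not prove this, for three reasons.
\begin{enumerate}
\item Remark \ref{remark} is aimed at the wrong morphism. Under your induction hypothesis $\partial C_p$ is proper, so $(D_{\partial C_p})_{\vert U_{x_0}}\to D^{x_0}_{U_{x_0}}$ is already an isomorphism; it is not the map in question.
\item Local duality at $x_0$ only computes the group $H^d_{x_0}$. It says nothing about its image in global cohomology.
\item Proposition \ref{partial X} does not give a ``trivial canonical sheaf'' in the sense you need. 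It says $\ZZ$ is \emph{a} canonical complex on $\partial C_p$. Canonical complexes are unique only up to shift and tensoring with an invertible sheaf, so you only get $D_{\partial C_p}\simeq\Lc[r]$ for some invertible $\Lc$.
\end{enumerate}
Dualizing, and using the compatibility isomorphism coming from properness of $\partial C_p$, your map becomes in degree $-d$ the restriction $H^{-d}(\partial C_p,D_{\partial C_p})\to H^{-d}\bigl((D_{\partial C_p})_{x_0}\bigr)$. Once $r=d$, this is $H^0(\partial C_p,\Lc)\to\Lc_{x_0}$, and nothing you say rules out a nontrivial $\Lc$.

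The missing idea is to compute $D_{\partial C_p}$ exactly.
\begin{enumerate}
\item Purity gives $r=d$: compare $\RR\Hom(\ZZ_{\{x_0\}},D_{\partial C_p})=\ZZ$ with $\RR\Gamma_{x_0}(U_{x_0},\ZZ)[r]=\ZZ[r-\codim(x_0,\partial C_p)]$.
\item $\Lc\simeq\ZZ$, because $H^0(\partial C_p,\Lc)=H^{-d}(\partial C_p,D_{\partial C_p})=H^d(\partial C_p,\ZZ)^*=\ZZ\neq 0$ on the connected cohomological sphere $\partial C_p$. This needs $d>0$; the case $d=0$, where $\partial C_p$ is two points, is trivial.
\end{enumerate}
Then the restriction $H^0(\partial C_p,\ZZ)\to\ZZ$ is an isomorphism by connectedness, which forces $n=\pm1$.

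This computation is the core of the paper's proof of (2)$\Rightarrow$(1), but the paper uses it differently. For irreducible $X$ with generic point $g$, it feeds $i_*D_{\partial X}=\ZZ_{\partial X}[n-1]$ into the triangle $i_*D_{\partial X}\to D_X\to\RR j_*(D_X)_g$. This gives $D_X=\ZZ_{\{g\}}[n]$, and the paper concludes by Corollary \ref{dualizante=canonico}, rather than checking Theorem \ref{thmproperspace}(2) at every closed point.

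With this step supplied, your route through ``every $x_0$'' is a valid alternative. As written, the decisive step is asserted rather than proved.
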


\begin{proof} 

The implication (1) $\Rightarrow $ (2) is Corollary \ref{pro+irred2}. For (2) $\Rightarrow $ (1), we proceed by induction on $\dim X$. If $\dim X=0$, then $X$ is trivially proper. Let us assume $\dim X >0$. We may assume that $X$ is irreducible with generic point  $g$,    and let $j\colon \{g\}\hookrightarrow X$ denote the open immersion. If $\dim X=1$, 
then $\partial X$ is a finite discrete set of closed points. More precisely, $\partial X$ consists of exactly two closed points because $\partial X$ is a cohomological sphere by assumption. This implies that $X=\PP_{\FF_1}^1$, which is proper   by Theorem \ref{thmproperspace}. Thus, we may assume henceforth that $\dim X>1$. 

We know that $\ZZ_{\{g\}}$ is a canonical complex on $X$. By Corollary \ref{dualizante=canonico}, it suffices to show that $D_X=\ZZ_{\{g\}}[n]$, where $n=\dim X$. Let us consider the exact triangle
\[ \RR\HHom_X^\pun(\ZZ_{\partial X},D_X)\to D_X\to \RR j_* (D_X)_g.\]

Let us compute the stalk $(D_X)_g$. One has
\[ (D_X)_g=\RR\Hom_X^\pun(\ZZ_{\{g\}},D_X)=\RR\Gamma(X,\ZZ_{\{g\}})^\vee \overset{\ref{lemma}}= \RR\Gamma_{\text{\rm red}}(\partial X,\ZZ)^\vee[1] = \ZZ[n]\] where the last isomorphism holds because $\partial X$ is assumed to be a cohomological sphere. Consequently, $$\RR j_*(D_X)_g=\ZZ[n].$$ 

Now, let us compute $\RR\HHom_X^\pun(\ZZ_{\partial X},D_X)$. If $i\colon\partial X\hookrightarrow X$ denotes the closed immersion, then (see, for example, \cite[Thm. 3.5]{ST2}) we have $$\RR\HHom_X^\pun(\ZZ_{\partial X},D_X)=i_* D_{\partial X}.$$ By the induction hypothesis, $\partial X$ is proper. Since $\ZZ$ is a canonical complex on $\partial X$ (Proposition \ref{partial X}), one has $D_{\partial X}=\Lc[r]$ for some invertible sheaf $\Lc$ on $\partial X$ and some integer $r$. Let us show that $r=n-1$ and $\Lc=\ZZ$.

Let $x_0\in\partial X$ be a closed point. Then we have
\[ \ZZ=\RR\Hom_{\partial X}^\pun(\ZZ_{\{x_0\}},D_{\partial X})\overset{\Lc_{\vert U_{x_0}}=\ZZ}{===} \RR\Hom_{U_{x_0}}^\pun( \ZZ_{\{x_0\}},\ZZ[r]) =   
\ZZ[-\codim(x_0,\partial X) +r]\] and then $r=\codim(x_0,\partial X)=n-1$, since $\partial X$ is pure by hypothesis.

In order to see that $\Lc=\ZZ$, it suffices to prove that $H^0(X,\Lc)\neq 0$ (note that $\partial X$ is connected because it is a cohomological sphere  of dimension $>0$). Indeed, we have
\[H^0(\partial X,\Lc)=H^{-n+1}(\partial X,\Lc[n-1])= H^{-n+1}(\partial X,D_{\partial X}) = H^{n-1}(\partial X,\ZZ)^*=\ZZ\] where the last isomorphism holds because $\partial X$ is a cohomological sphere. 

Combining these facts, the initial exact triangle becomes
\[ \ZZ_{\partial X}[n-1]\to D_X\to \ZZ[n].\] Thus, we obtain a distinguished exact triangle   $$D_X\to \ZZ[n]\overset{\phi}\to \ZZ_{\partial X}[n].$$ 
Taking cohomology, we extract the exact sequence 
\[\ZZ=H^0(X,\ZZ)\overset{H^{-n}(X,\phi)}\longrightarrow \ZZ=H^0(X,\ZZ_{\partial X})\to H^{-n+1}(X,D_X) .\] Since $X$ is contratible to $g$, $H^{-n+1}(X,D_X)=0$. Hence, $\phi[-n]\colon\ZZ\to\ZZ_{\partial X}$ is the natural epimorphism (up to a sign). In conclusion, we obtain
\[D_X=\ZZ_{\{g\}}[n]\] as desired.

The implication (1) $\Rightarrow $ (3) is immediate from Theorem \ref{thmproperspace}. Finally, let us show that (3) $\Rightarrow $ (2). Let $x_0$ be a closed point such that $\partial C_p-\{x_0\}$ is cohomologically trivial. Since $\partial C_p$ is pure, we have $\codim(x_0,\partial C_p)=\dim\partial C_p$. From the exact triangle of local cohomology, we obtain:  \[ \ZZ [-\dim \partial C_p ] =\RR\Gamma_{x_0}(\partial C_p,\ZZ)\to \RR\Gamma (\partial C_p,\ZZ)\to \RR\Gamma (\partial C_p-\{ x_0\},\ZZ)=\ZZ \]
and then $\partial C_p$ is a cohomological sphere. 
\end{proof}

%\begin{rem} {\rm As  has been shown in the proof, if $X$ is proper, then $\partial C_p-\{x_0\}$ is cohomologically trivial for \emph{any} closed point $x_0\in\partial C_p$.}
%\end{rem}

If $X$ is proper and dualizable, then for every point \(p\), the space $\partial C_p$ is not only a cohomological sphere, but its dualizing complex is also that of a sphere:

\begin{prop} Let $X$ be a proper and dualizable space. For every $p\in X$, one has

\begin{enumerate}
\item The dualizing complex $D_{C_p}$ is a canonical complex on $C_p$, and \[D_{C_p}=\ZZ_{\{p\}}[\dim C_p].\]

\item The dualizing complex $D_{\partial C_p}$ is a canonical complex on $\partial C_p$, and
\[ D_{\partial C_p}=\ZZ [\dim \partial C_p].\]
\end{enumerate}
\end{prop}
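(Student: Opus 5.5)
The plan is to reduce everything to facts already proved for irreducible proper dualizable spaces, essentially by re-reading the proof of Theorem \ref{proper+dualizable}.

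For (1), note that $C_p$ is a closed subset of the proper space $X$, so $C_p$ is proper (Examples after Theorem \ref{thmproperspace}). Since $X$ is dualizable, $C_p$ is dualizable, and a fortiori locally dualizable, because closed subsets of dualizable spaces are dualizable. By Corollary \ref{dualizante=canonico}, $D_{C_p}$ is therefore a canonical complex on $C_p$. Now $C_p$ is irreducible with generic point $p$, so Theorem \ref{dualizableirreducible} gives $D_{C_p}=\ZZ_{\{p\}}[r]$ for a unique $r$. To determine $r$, I compute the stalk at $p$ as in the proof of Theorem \ref{proper+dualizable}:
\[(D_{C_p})_p=\RR\Gamma(C_p,\ZZ_{\{p\}})^\vee=\RR\Gamma_{\rm red}(\partial C_p,\ZZ)^\vee[1].\]
By Corollary \ref{pro+irred2}, $\partial C_p$ is pure and a cohomological sphere of dimension $\dim C_p-1$. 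This stalk is then $\ZZ[\dim C_p]$, so $r=\dim C_p$. The degenerate case $\dim C_p=0$ needs a separate look: there $\partial C_p=\emptyset$, $\RR\Gamma_{\rm red}=\ZZ[1]$, and the stalk is $\ZZ$, which is consistent. Alternatively, one can simply quote the conclusion $D_X=\ZZ_{\{g\}}[n]$ obtained in the proof of (2)$\Rightarrow$(1) of Theorem \ref{proper+dualizable}, applied to $X=C_p$.

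For (2), the space $\partial C_p$ is again a closed subset of $X$. It is therefore proper, and it is dualizable (indeed Proposition \ref{partial X} shows $\ZZ$ is a canonical complex on it, since $C_p$ is irreducible and dualizable). By Corollary \ref{dualizante=canonico}, $D_{\partial C_p}$ is canonical. Canonical complexes are unique up to shift and invertible sheaf, so $D_{\partial C_p}=\Lc[r]$ with $\Lc$ invertible; this holds on each connected component, and I will treat connectedness below. The identification $r=\dim\partial C_p$ and $\Lc\simeq\ZZ$ is exactly the argument in the proof of Theorem \ref{proper+dualizable}:
\begin{itemize}
\item Evaluating $\RR\Hom(\ZZ_{\{x_0\}},-)$ at a closed point $x_0$ gives $r=\codim(x_0,\partial C_p)$, which equals $\dim\partial C_p$ by purity (Corollary \ref{pro+irred2}).
\item One has $H^0(\partial C_p,\Lc)=H^{\dim\partial C_p}(\partial C_p,\ZZ)^*=\ZZ\neq0$, which forces $\Lc\simeq\ZZ$.
\end{itemize}

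The main obstacle is the low-dimensional and disconnected cases in (2). If $\dim\partial C_p>0$, the cohomological-sphere property makes $\partial C_p$ connected, and the global-section argument applies. If $\dim\partial C_p=0$, then $\partial C_p$ is two points, or empty when $\dim C_p=0$. In the two-point case $D_{\partial C_p}=\ZZ$ directly. In the empty case the statement is vacuous, since both sides are zero. I would handle these separately before running the general argument.
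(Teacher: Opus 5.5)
Your proof is correct. Part (1) is essentially the paper's argument: canonicity comes from Corollary \ref{dualizante=canonico}, the form $\ZZ_{\{p\}}[r]$ from irreducibility, and $r$ is read off at the stalk at $p$ using Lemma \ref{lemma} and the sphere property.

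For (2) you take a different route from the paper. You re-run the uniqueness argument from the proof of Theorem \ref{proper+dualizable}. You write $D_{\partial C_p}=\Lc[r]$, fix $r$ at a closed point via property (A) and purity, and trivialize $\Lc$ through a nonzero global section. This forces you to handle connectedness and the cases $\dim\partial C_p\le 0$ separately.

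The paper instead deduces (2) directly from (1). It applies $i^{-1}\RR\HHom^\pun_{C_p}(\ZZ_{\partial C_p},-)$ to $D_{C_p}=\ZZ_{\{p\}}[\dim C_p]$, using two facts:
\begin{enumerate}
\item $\RR\HHom^\pun_{C_p}(\ZZ_{\partial C_p},D_{C_p})=i_*D_{\partial C_p}$;
\item $\RR\HHom^\pun_{C_p}(\ZZ_{\partial C_p},\ZZ_{\{p\}})=\ZZ_{\partial C_p}[-1]$, which is the computation in the proof of Proposition \ref{partial X}.
\end{enumerate}
Together these give $D_{\partial C_p}=\ZZ[\dim C_p-1]=\ZZ[\dim\partial C_p]$ in one line. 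That argument works uniformly in all dimensions, including the empty and two-point cases, and needs no connectedness, purity or invertible-sheaf considerations. Canonicity then follows since $\ZZ$ is canonical on $\partial C_p$ by Proposition \ref{partial X}.

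Your version uses only intrinsic properties of $\partial C_p$: it is proper, pure, a cohomological sphere, and has $\ZZ$ as a canonical complex. It therefore applies to any such space, not just boundaries of irreducible closed subsets. The paper's version is shorter because it exploits the ambient $C_p$ and part (1) already proved.
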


\begin{proof} (1) Since $C_p$ is proper and dualizable, its dualizing complex is canonical. Moreover, as $C_p$ is   irreducible with generic point $p$, we have $D_{C_p}=\ZZ_{\{p\}}[r]$ for a unique integer $r$. Taking $\RR\Hom_{C_p}(\ZZ_{\{p\}},-)$ we obtain
\[ \RR\Gamma(C_p,\ZZ_{\{p\}})^\vee=\ZZ[r].\] Since $\RR\Gamma(C_p,\ZZ_{\{p\}})\overset{\ref{lemma}}=\RR\Gamma_{\text{\rm red}}(\partial C_p,\ZZ)[-1]$ and $\partial C_p$ is a cohomological sphere, we conclude that $r=\dim C_p$.

(2) Since $\partial C_p$ is proper and dualizable, its dualizing complex is canonical. The formula follows from (1) by applying $i^{-1}\RR\HHom_{C_p}^\pun(\ZZ_{\partial C_p},-)$, where $i\colon\partial C_p\to C_p$ is the closed immersion.
\end{proof}

Let us now see what Theorem \ref{thmproperonfibres} looks like under the locally dualizable hypothesis.

\begin{thm}\label{properfibresld} Let $f\colon X\to Y$ be a c-proper map between locally dualizable spaces. The following conditions are equivalent:
\begin{enumerate}\item $f$ is proper.
\item $f$ has proper fibres and, for every $f$-closed point $x_0$ and every $p>x_0$ such that $f(p)> f(x_0)$, one has: 
\[\dim [x_0,p]=\dim [f(x_0),f(p)] + \dim G_p 
\] where $G_p$  is the closure of $\{p\}$ in $f^{-1}(f(p))$, i.e.,  the generic fibre of $f_{\vert C_p}\colon C_p\to C_{f(p)}$.
\end{enumerate}
\end{thm}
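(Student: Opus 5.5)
The plan is to derive the statement from Theorem \ref{thmproperonfibres}, (1)$\Leftrightarrow$(2), using facts (A) and (B) on locally dualizable spaces. Under local dualizability, each of the three complexes in the cohomological identity of Theorem \ref{thmproperonfibres}(2) becomes a single copy of $\ZZ$ placed in some degree. The identity then reduces to an equality of those degrees, which is exactly the dimension formula. Both conditions also contain the hypothesis ``$f$ has proper fibres'', so only the second clauses need to be compared.

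First, fix an $f$-closed point $x_0$ and a point $p>x_0$ with $f(p)>f(x_0)$, and compute the three terms.
\begin{itemize}
\item By (B) applied in $X$, the interval $[x_0,p]$ is pure and $(x_0,p)$ is a cohomological sphere. Since $\dim (x_0,p)=\dim[x_0,p]-2$, this gives $\RR\Gamma_{\text{\rm red}}((x_0,p),\ZZ)=\ZZ[2-\dim[x_0,p]]$.
\item In the same way, applying (B) in $Y$ gives $\RR\Gamma_{\text{\rm red}}((f(x_0),f(p)),\ZZ)=\ZZ[2-\dim[f(x_0),f(p)]]$.
\item For the fibre term, note that $X_p=f^{-1}(f(p))$ is closed in the open set $f^{-1}(U_{f(p)})$. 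Since local dualizability passes to open and to closed subsets, $X_p$ is locally dualizable. It is also proper, by hypothesis in (2) and by Theorem \ref{thmproperonfibres} in (1). Theorem \ref{proper+dualizable}, applied to $X_p$ and the point $p$, then shows that $\partial G_p$ is pure and a cohomological sphere. Hence $\RR\Gamma_{\text{\rm red}}(\partial G_p,\ZZ)[-1]=\ZZ[-\dim G_p]$.
\item The degenerate case $G_p=\{p\}$ fits this formula: there $\partial G_p=\emptyset$, and by Definition \ref{coh-sphere} we get $\ZZ[1][-1]=\ZZ$.
\end{itemize}

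Next, the right-hand side of the identity in Theorem \ref{thmproperonfibres}(2) becomes $\ZZ[2-\dim[f(x_0),f(p)]-\dim G_p]$, while the left-hand side is $\ZZ[2-\dim[x_0,p]]$. Two complexes of the form $\ZZ[k]$ are isomorphic if and only if their shifts agree, so the identity holds if and only if
\[
\dim[x_0,p]=\dim[f(x_0),f(p)]+\dim G_p .
\]
Therefore (1)$\Rightarrow$(2) is immediate from Theorem \ref{thmproperonfibres}, (1)$\Rightarrow$(2).

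The main obstacle is the converse (2)$\Rightarrow$(1). The dimension formula only gives an \emph{abstract} isomorphism between the two sides, whereas properness requires the \emph{natural} morphism \eqref{proper-condition} to be an isomorphism. Both source and target of that morphism are of the form $\ZZ[k]$ with the same $k$, so it is enough to show that the induced map $\ZZ\to\ZZ$ is $\pm1$. I would first try to obtain this from canonicity, in the spirit of Remark \ref{remark}. Map \eqref{proper-condition} is dual to $\RR\Hom^\pun_{U_{x_0}}(\ZZ_{\{p\}},-)$ applied to the compatibility morphism $\phi\colon (f^\times D^{y_0}_{U_{y_0}})_{\vert U_{x_0}}\to D^{x_0}_{U_{x_0}}$, and the target of $\phi$ is canonical. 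If $\phi$ is an isomorphism after applying $\RR\Hom^\pun(\ZZ_{\{x_0\}},-)$ (the case $p=x_0$, which is automatic for c-proper maps), the goal is to upgrade this to an isomorphism on all stalks.

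I would carry out that upgrade by induction on $\dim[x_0,p]$, using the triangle $\ZZ_{\{p\}}\to\ZZ_{C_p}\to\ZZ_{\partial C_p}$. The induction hypothesis handles the strata of $\partial C_p\cap U_{x_0}$, and the dimension formula guarantees that the remaining term has the correct degree. In this way a possible nontrivial multiplication by some integer $m\neq\pm1$ would be excluded by the five-lemma on the corresponding long exact sequences. If this route gets stuck, the fallback is to reduce via Proposition \ref{cohprop-prop} to $f_{\vert C_p}\colon C_p\to C_{f(p)}$. There both spaces are irreducible with canonical complexes $\ZZ_{\{\text{generic}\}}$, so one can try to identify the relevant map explicitly.
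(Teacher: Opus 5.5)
\textbf{There is a genuine gap: (2)$\Rightarrow$(1) is not proved.} Your computation of the three complexes, and your proof of (1)$\Rightarrow$(2), match the paper's.

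The paper obtains the converse in one line from the implication (2)$\Rightarrow$(1) of Theorem~\ref{thmproperonfibres}. That theorem's hypothesis is only that the displayed isomorphism holds. Once the dimension formula makes both sides the same $\ZZ[k]$, it applies and you are done. You decline to use it, on the grounds that properness needs the \emph{natural} map \eqref{proper-condition} to be an isomorphism. The objection has substance: the ``reversing the arguments'' step in the proof of Theorem~\ref{thmproperonfibres} tacitly needs the identification to be the one induced by \eqref{proper-condition}. But having raised it, you must replace that step, and what you give is only a list of strategies, none carried out.

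None of these strategies works as described. Put $y_0=f(x_0)$, $W=f^{-1}(U_{y_0})$ and $X_{y_0}=f^{-1}(y_0)$.
\begin{enumerate}
\item Remark~\ref{remark} applies only if $(f^\times D^{y_0}_{U_{y_0}})_{\vert U_{x_0}}$ is already known to be canonical. That is equivalent to the conclusion (compare the proof of Proposition~\ref{proper-canonical}), so this route is circular.
\item In the five-lemma induction, the induction hypothesis and properness of $X_{y_0}$ give the isomorphism for every $\ZZ_{\{q\}}$ with $q\in[x_0,p)$. So the triangle $\ZZ_{\{p\}}\to\ZZ_{[x_0,p]}\to\ZZ_{[x_0,p)}$ (sheaves extended by zero from $U_{x_0}$ to $W$) only reduces the claim to $\ZZ_{[x_0,p]}$. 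There the source $\RR\Gamma_{x_0}([x_0,p],\ZZ)$ vanishes. The target, however, is $\RR\Gamma_{X_{y_0}}(W,\ZZ_{[x_0,p]})=\RR\Gamma_{(X_{y_0}\cap C_p)- U_{x_0}}(C_p^{y_0}- U_{x_0},\ZZ)[-1]$. This is a global group built from points of $C_p$ not above $x_0$. The dimension formula says nothing about it, and its vanishing is just a restatement of the goal.
\item If you use $\ZZ_{C_p}$ itself, you bring in strata $q\in\partial C_p- U_{x_0}$ where the natural map is not an isomorphism at all. For instance, for an $f$-closed point $q\neq x_0$ of $C_p$ over $y_0$, the source is $0$ and the target is $\ZZ$.
\item The claim that ``the remaining term has the correct degree'' is again only an abstract isomorphism, so the five lemma never has a morphism to act on.
\end{enumerate}

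What is needed is control of the map itself. By the argument of Lemma~\ref{lemma}, the map is (up to the shift $[-1]$) the natural map $\RR\Gamma_{x_0}(\partial C_p^{y_0},\ZZ)\to\RR\Gamma_{Z}(\partial C_p^{y_0},\ZZ)$, where $Z=X_{y_0}\cap C_p$. To show it is $\pm1$ you must use more than degrees. One plausible, unchecked route is a duality argument. It would use that $Z$ is acyclic (from c-properness) and that $\partial C_p^{y_0}$ has spherical links along $Z$ (from properness of $X_{y_0}$ plus local dualizability). Either supply such an argument, or use Theorem~\ref{thmproperonfibres} as stated, as the paper does.
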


\begin{proof} This is a direct  consequence of the equivalence between (1) and (2) of Theorem \ref{thmproperonfibres}, taking into account  that, under the local dualizable hypothesis,   property (B) implies: 
\[ \aligned \RR\Gamma_{\text{\rm red}}((x_0,p),\ZZ)&=\ZZ[2-\dim [x_0,p]]\\  \RR\Gamma_{\text{\rm red}}((f(x_0),f(p)),\ZZ)&=\ZZ[2-\dim [f(x_0),f(p)]]\endaligned\]
and $\RR\Gamma_{\text{\rm red}}(\partial G_p,\ZZ)=\ZZ[-\dim \partial G_p]=\ZZ[1-\dim G_p]$ whenever the fibre $f^{-1}(f(p))$ is proper (Theorem \ref{proper+dualizable})

\end{proof}

\begin{cor}\label{properonvarieties} Let $f\colon X\to Y$ be a proper morphism between locally dualizable spaces. For every $p\in X$ one has:\[ \dim C_p=\dim C_{f(p)}+\dim G_p.\] In particular, $\dim C_p=\dim C_{f(p)}$ for every $f$-closed point $p$. 
\end{cor}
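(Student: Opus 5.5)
We need to prove that $\dim C_p=\dim C_{f(p)}+\dim G_p$ for every $p\in X$; the plan is to apply Theorem \ref{properfibresld} to a suitable $f$-closed point below $p$ and then use purity to turn interval lengths into dimensions of closures.

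Fix $p\in X$ and set $y=f(p)$. First, choose an $f$-closed point $x_0\le p$ which is a closed point of $X$ itself (any minimal element of $C_p$ is closed in $X$, hence $f$-closed). Since $f$ is c-proper, $f_{\vert C_p}\colon C_p\to C_y$ is surjective, so every point of $C_y$ lies in $f(C_p)$. Also $f(x_0)$ is a closed point of $Y$: it is minimal in $C_y$, because any $y'<f(x_0)$ would be $f(x')$ for some $x'\in C_{x_0}=\{x_0\}$, a contradiction.

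Second, reduce the dimensions of closures to interval lengths via purity. Since $f$ is proper, the spaces $X$ and $Y$ are locally dualizable. By Corollary \ref{loc-dual-op} the space $C_p$ is dualizable. I would also like to invoke purity of $\partial C_p$ (Theorem \ref{proper+dualizable}), but that requires $X$ proper, which we do not have. So the purity must come from elsewhere; this is the main obstacle.

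The fix is to choose $x_0$ so that $[x_0,p]$ realizes $\dim C_p$ and $[f(x_0),y]$ realizes $\dim C_y$.
\begin{enumerate}
\item The first requirement is easy: take a maximal chain in $C_p$ of length $\dim C_p$. It starts at a closed point $x_0$, and catenarity makes every chain from $x_0$ to $p$ have length $\dim[x_0,p]=\dim C_p$.
\item For the second, the main tool would be Theorem \ref{properfibresld} itself. Apply it to two closed points $x_0,x_1\le p$ with images $y_0,y_1$. For any closed $x_0\le p$ with $f(x_0)<y$ it gives $\dim[x_0,p]=\dim[f(x_0),y]+\dim G_p$. When $f(x_0)=y$, we have $\dim[x_0,p]=\dim G_p$ directly, since $[x_0,p]\subseteq G_p$ is an interval of the proper fibre, which is pure by Theorem \ref{proper+dualizable}.
\item Now take $x_0$ with $\dim[x_0,p]=\dim C_p$ maximal. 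Then $\dim C_p=\dim[f(x_0),y]+\dim G_p\le\dim C_y+\dim G_p$.
\item Conversely, pick a closed $y_1\le y$ with $\dim[y_1,y]=\dim C_y$. By surjectivity, $y_1=f(x_1)$ for some $x_1\le p$, and shrinking $x_1$ to a minimal point keeps its image equal to $y_1$, since $y_1$ is minimal. Applying the formula to $x_1$ gives $\dim C_p\ge\dim[x_1,p]=\dim C_y+\dim G_p$.
\end{enumerate}

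Combining the two inequalities gives $\dim C_p=\dim C_{f(p)}+\dim G_p$. For the final assertion, if $p$ is $f$-closed then $G_p=\{p\}$, so $\dim G_p=0$ and $\dim C_p=\dim C_{f(p)}$.

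The delicate points are the case $f(x_0)=f(p)$ and confirming that minimal points of $C_p$ are $f$-closed; both are handled above. Beyond these, the argument is just a matter of bookkeeping over the two inequalities.
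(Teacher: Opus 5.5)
Your proof is correct and is essentially the paper's proof. The upper bound comes from applying Theorem \ref{properfibresld} at a closed point $x_0\le p$ with $\dim[x_0,p]=\dim C_p$. The lower bound comes from applying it at an $f$-closed point of $C_p$ over a closed point $y_0\le f(p)$ with $\dim[y_0,f(p)]=\dim C_{f(p)}$.

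You also treat the degenerate case $f(x_0)=f(p)$ separately, which the paper leaves implicit. That case is in fact trivial: $f(p)$ is then closed, so $C_{f(p)}=\{f(p)\}$ forces $G_p=C_p$.

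Two minor slips:
\begin{itemize}
\item Local dualizability of $X$ and $Y$ is a hypothesis, not a consequence of properness.
\item Catenarity is not needed in your step (1).
\end{itemize}
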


\begin{proof} Let $x_0\in C_p$ be a closed point such that $\dim [x_0,p]=\dim C_p$. By Theorem \ref{properfibresld}, we have
\[ \dim C_p =\dim [f(x_0),f(p)] +\dim G_p \leq \dim C_{f(p)}+\dim G_p.\]
Now, let $y_0\in C_{f(p)}$ be a closed point such that $\dim [y_0,f(p)]=\dim C_{f(p)}$ and choose an $f$-closed point $x_0\in C_p\cap f^{-1}(y_0)$. Applying Theorem \ref{properfibresld} again yields
\[ \dim C_p\geq \dim [x_0,p]=\dim [f(x_0),f(p)] +\dim G_p  = \dim C_{f(p)}+\dim G_p.\]

Finally, if $p$ is $f$-closed, then $G_p=\{p\}$ and $\dim G_p=0$.
\end{proof}

This result admits a converse if we assume $X$ and $Y$ to be pure:

\begin{prop}\label{proper-formula} Let $f\colon X\to Y$ be a c-proper map between locally dualizable and pure spaces (more generally, assume that $C_p$ and $C_{f(p)}$ are pure for every $p\in X$). Then $f$ is proper if and only if it has proper fibres and
\[ \dim C_p=\dim C_{f(p)}+\dim G_p \] for every $p\in X$.
\end{prop}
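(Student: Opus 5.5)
The forward direction is Corollary \ref{properonvarieties} together with Theorem \ref{properfibresld}: if $f$ is proper, its fibres are proper and the dimension formula holds for every $p$. The content is the converse. Here I would verify condition (2) of Theorem \ref{properfibresld}. Fibres are proper by hypothesis, so it remains to show that
\[\dim [x_0,p]=\dim [f(x_0),f(p)] + \dim G_p\]
for every $f$-closed point $x_0$ and every $p>x_0$ with $f(p)>f(x_0)$.

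First reduce each interval dimension to closure dimensions. Since $C_p$ is pure, every maximal chain in $C_p$ has length $\dim C_p$. A maximal chain of $C_p$ through $x_0$ can be taken to start at a closed point $x'\le x_0$, and concatenating with a maximal chain of $[x_0,p]$ (pure because $X$ is catenary, Theorem \ref{locallydualizable}) gives $\dim C_p=\dim C_{x_0}+\dim[x_0,p]$. This uses that $C_{x_0}$ is pure, being a closed irreducible subset of the pure space $C_p$; by Lemma \ref{pure}, all maximal chains of $C_{x_0}$ have length $\dim C_{x_0}$. Likewise, using purity of $C_{f(p)}$ and catenarity of $Y$, $\dim C_{f(p)}=\dim C_{f(x_0)}+\dim[f(x_0),f(p)]$.

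Next apply the hypothesis formula at $p$ and at $x_0$. Since $x_0$ is $f$-closed, $G_{x_0}=\{x_0\}$, so $\dim C_{x_0}=\dim C_{f(x_0)}$. Substituting into the two additivity identities and into $\dim C_p=\dim C_{f(p)}+\dim G_p$ gives
\[\dim C_{x_0}+\dim[x_0,p]=\dim C_{f(x_0)}+\dim[f(x_0),f(p)]+\dim G_p,\]
and cancelling $\dim C_{x_0}=\dim C_{f(x_0)}$ yields exactly the required identity. Theorem \ref{properfibresld} then gives properness.

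The main obstacle is the additivity $\dim C_p=\dim C_{x_0}+\dim[x_0,p]$. It needs purity of $C_p$ (assumed) and of $[x_0,p]$, both of which are supplied by catenarity. The parenthetical weaker hypothesis, purity of $C_p$ and $C_{f(p)}$ for every $p$, is precisely what this step uses, and it implies purity of $C_{x_0}$ and $C_{f(x_0)}$ since these are also closures of points. In the pure-space case, these closures are pure because a maximal chain in $C_p$ extends by the chain above $p$ to a maximal chain in $X$, so they are covered as well. One must also check that $f(x_0)$ is comparable to the closed points used; this is fine because $f(x_0)\le f(p)$ by monotonicity.
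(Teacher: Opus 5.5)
Your proposal is correct and follows essentially the paper's argument: you verify condition (2) of Theorem \ref{properfibresld} by writing $\dim[x_0,p]=\dim C_p-\dim C_{x_0}$ and $\dim[f(x_0),f(p)]=\dim C_{f(p)}-\dim C_{f(x_0)}$, and then use the dimension formula at $p$ and at the $f$-closed point $x_0$ (where $G_{x_0}=\{x_0\}$). The only cosmetic difference is that the paper first reduces to $X,Y$ pure by restricting to $C_p\to C_{f(p)}$, whereas you use purity of $C_p$ and $C_{f(p)}$ directly; incidentally, the additivity step needs only purity of $C_p$, obtained by concatenating a longest chain of $C_{x_0}$ with a longest chain of $[x_0,p]$, so your appeals to Lemma \ref{pure} and to catenarity are unnecessary (though harmless).
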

\begin{proof} If $f$ is proper,  it has proper fibres and the dimension formula holds by Corollay  \ref{properonvarieties}. For the converse, it suffices to check that $f_{\vert C_p}\colon C_p\to C_{f(p)}$ is proper, so we may assume without loss of generality that  $X$ and $Y$ are pure. By Theorem  \ref{properfibresld}, it is enough to show that, for every    $f$-closed point $x_0$ and every $p>x_0$ such that $f(p)>f(x_0)$, one has 
\[\dim [x_0,p]=\dim [f(x_0),f(p)] + \dim G_p. \]  
Since $X$ and $Y$ are pure, we have
\[ \dim [x_0,p]=\dim C_p-\dim C_{x_0}\quad ,\quad \dim[f(x_0),f(p)]=\dim C_{f(p)}-\dim C_{f(x_0)}.\]
By hypothesis, the relations
\[ \dim C_p = \dim C_{f(p)} + \dim G_p \quad \text{and} \quad \dim C_{x_0} = \dim C_{f(x_0)} \]
hold, where the last equality follows from the fact that $x_0$ is $f$-closed.  Combining these identities, we conclude that  
\[\dim [x_0,p]=\dim [f(x_0),f(p)] + \dim G_p, \] as desired.
\end{proof}

\begin{rem} {\rm Let us interpret $X$ as an $\FF_1$-scheme and the closed subsets $C_p$ as the ``subvarieties'' of $X$. For a map $f\colon X\to Y$, the surjectivity of $f\colon C_p\to C_{f(p)}$ is equivalent to being ``dominant''  (i.e., density of the image), which corresponds to a dominant morphism of ``varieties''. Under this dictionary, the identity 
$$\dim C_p=\dim C_{f(p)}+\dim G_p$$ agrees with the classical dimension formula for a dominant morphism of algebraic varieties (note that $G_p$ is the generic fibre of $f\colon C_p\to C_{f(p)}$).}
\end{rem}

To conclude this section, we examine what the properness of a space $X$ translates to in the dual space $X^{\rm op}$.  This result was unexpected,  as it does not possess a known analog in classical geometric frameworks.

\begin{thm}\label{properversusCM} Let $X$ be a locally dualizable space. Then, $X$ is proper if and only if $X^{\text{\rm op}}$ is Cohen--Macaulay and its canonical sheaf is locally trivial of rank 1; that is, $$\omega_{U_{\wh p} }\simeq \ZZ$$ for every $\wh p \in X^{\text{\rm op}}$.
\end{thm}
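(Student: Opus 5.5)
The plan is to reduce both sides to statements about the closed subsets $\partial C_p$ of $X$, and then compare them. On the $X$ side, Theorem \ref{proper+dualizable} says that, since $X$ is locally dualizable, $X$ is proper if and only if $\partial C_p$ is pure and a cohomological sphere for every $p\in X$. On the $X^{\rm op}$ side, Corollary \ref{loc-dual-op} says $X^{\rm op}$ is locally dualizable. Corollary \ref{Gor} then says that $X^{\rm op}$ is Cohen--Macaulay with $\omega_{U_{\wh p}}\simeq\ZZ$ for every $\wh p$ if and only if $U_{\wh p}^*$ is a cohomological sphere for every $\wh p\in X^{\rm op}$. Since $U_{\wh p}=(C_p)^{\rm op}$, we have $U_{\wh p}^*=(\partial C_p)^{\rm op}$. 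By Proposition \ref{X=Xop}, $\RR\Gamma((\partial C_p)^{\rm op},\ZZ)=\RR\Gamma(\partial C_p,\ZZ)$, and dimensions agree under $(-)^{\rm op}$. Hence the right-hand condition is equivalent to: $\partial C_p$ is a cohomological sphere for every $p\in X$.

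It therefore remains to remove the purity requirement in Theorem \ref{proper+dualizable}(2). Precisely, I will show that for a locally dualizable $X$, if $\partial C_p$ is a cohomological sphere for every $p$, then $\partial C_p$ is automatically pure. The argument is by induction on $\dim C_p$. Since $C_p$ is irreducible and dualizable (Corollary \ref{loc-dual-op}), Lemma \ref{pure} reduces purity of $\partial C_p$ to showing that every closed point $x_0\in C_p$ has $\codim(x_0,C_p)=\dim C_p$.

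For the main step, fix a closed point $x_0<p$. By induction, for every $q<p$ the space $\partial C_q$ is pure, so every $C_q$ with $q<p$ is pure. By property (A), $\RR\Gamma_{x_0}(\partial C_p,\ZZ)=\ZZ[-\codim(x_0,\partial C_p)]$. Consider the local cohomology triangle
\[\RR\Gamma_{x_0}(\partial C_p,\ZZ)\to\RR\Gamma(\partial C_p,\ZZ)\to\RR\Gamma(\partial C_p-\{x_0\},\ZZ).\]
The goal is to show that $\partial C_p-\{x_0\}$ is cohomologically trivial, or at least to compute its cohomology, so that the triangle forces $\codim(x_0,\partial C_p)=\dim\partial C_p$.

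This is the step I expect to be the main obstacle. A cleaner route is available if $\partial C_p$ is proper: then it is a closed subset of a proper space, and the argument in Proposition \ref{prop+irred} gives $\ZZ[-\codim(x_0,\partial C_p)]=\RR\Gamma_{\rm red}(\partial C_p,\ZZ)=\ZZ[-\dim\partial C_p]$ for every closed $x_0$, which yields purity immediately. So I will organize the induction to prove the stronger statement that $C_p$ is proper, not just pure. Assume inductively that every $C_q$ with $q<p$ is proper. Then $\partial C_p$ is covered by the proper closed sets $C_q$, so by the corollary on closed coverings $\partial C_p$ is proper. Proposition \ref{prop+irred}'s computation then makes $\partial C_p$ pure and a cohomological sphere. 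Applying Theorem \ref{proper+dualizable} (implication (2)$\Rightarrow$(1)) to $C_p$, whose boundaries $\partial C_q$ for $q\le p$ are now all pure spheres, shows that $C_p$ is proper, which closes the induction.

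The converse direction is immediate. If $X$ is proper, Corollary \ref{pro+irred2} shows that each $\partial C_p$ is a cohomological sphere, so $X^{\rm op}$ satisfies the condition of Corollary \ref{Gor}, and hence $X^{\rm op}$ is Cohen--Macaulay with $\omega_{U_{\wh p}}\simeq\ZZ$.
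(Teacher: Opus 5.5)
Your reduction of the right-hand side to ``$\partial C_p$ is a cohomological sphere for every $p\in X$'' is correct; it uses Corollaries \ref{loc-dual-op} and \ref{Gor}, the identity $U_{\wh p}^*=(\partial C_p)^{\rm op}$ and Proposition \ref{X=Xop}. Your proof that properness implies the Cohen--Macaulay condition is the paper's argument.

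The gap is the inductive step that is supposed to give purity of $\partial C_p$. The inductive hypothesis only tells you that $\partial C_p$ is proper, and you then invoke the computation of Proposition \ref{prop+irred}. That computation rests on the identity $\RR\Gamma_{x_0}(\partial C_p,\ZZ)=\RR\Gamma_{\text{\rm red}}(\partial C_p,\ZZ)$. This is condition (3) of Theorem \ref{thmproperspace} for the pair $x_0<p$, so it expresses properness of $C_p$ at its generic point. Equivalently, it says $\partial C_p-\{x_0\}$ is cohomologically trivial, which is exactly the obstacle you had identified. Properness of $\partial C_p$ only supplies the analogous identities for $\partial C_q$ with $q<p$, so the step assumes what it is meant to prove.

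Nor can the step be repaired using properties of $\partial C_p$ alone. Take the boundary of a tetrahedron with one extra edge glued at a single vertex. It is a projective simplicial complex, hence proper and locally dualizable. It is a cohomological $2$-sphere, and it is not pure. It cannot be $\partial C_p$ for a locally dualizable $C_p$: for the free endpoint $v$ of the extra edge, the interval $(v,p)$ would be a single point rather than a $0$-sphere. So any proof of purity has to use the local dualizability of $C_p$ itself.

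The missing idea is simpler than your induction. In the direction where you need purity, you are \emph{given} that $X^{\rm op}$ is Cohen--Macaulay, and Cohen--Macaulay spaces are pure \cite[Thm. 5.18]{ST2}. Hence $U_{\wh p}=(C_p)^{\rm op}$ is pure, and so is $\partial C_p=(U_{\wh p}^*)^{\rm op}$. Combined with your observation that $\partial C_p$ is a cohomological sphere, Theorem \ref{proper+dualizable} gives that $X$ is proper. This is exactly the paper's argument, and it makes the induction unnecessary.

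Incidentally, this argument shows that for locally dualizable $X$ the purity hypothesis in Theorem \ref{proper+dualizable}(2) is redundant. So the auxiliary statement you set out to prove is true; only your proof of it fails.
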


\begin{proof} Assume that $X$ is proper. For each $\wh p\in X^{\text{\rm op}}$, observing that  $U_{\wh p}^*=(\partial C_p)^{\text{\rm op}}$, we have:
\[ \RR\Gamma_{\text{\rm red}}(U_{\wh p}^*,\ZZ) \overset{\ref{X=Xop}} = \RR\Gamma_{\text{\rm red}}(\partial C_p,\ZZ) \overset{\ref{proper+dualizable} }=\ZZ[-\dim  \partial C_p] = \ZZ[-\dim U_{\wh p}^*],\] which implies that  $U_{\wh p}^*$ is a cohomological sphere.
The claim follows directly from  Corollary \ref{Gor}.

Conversely, assume that $X^{\text{\rm op}}$ is Cohen-Macaualy and satisfies $\omega_{U_{\wh p} }\simeq \ZZ$ for every $\wh p \in X^{\text{\rm op}}$. For each $p\in X$, the space $\partial C_p$ is pure  because $(\partial C_p)^{\text{\rm op}}=U_{\wh p}^*$ and $U_{\wh p}$ is pure (since it is Cohen-Macaulay, see \cite[Thm. 5.18]{ST2}). Moreover, $\partial C_p$ is a cohomological sphere because
$ \RR\Gamma_{\text{\rm red}}(\partial C_p,\ZZ)     =\RR\Gamma_{\text{\rm red}}(U_{\wh p}^*,\ZZ)$ and $U_{\wh p}^*$ is a cohomological sphere by Corollary \ref{Gor}. By Theorem \ref{proper+dualizable}, $X$ is proper.
\end{proof}

\section {Examples}\label{section-examples}

In this section we present an elementary study of properness on projective  spaces, spheres and 1-dimensional finite spaces, which we refer to as $\mathbb{F}_1$-curves.

\begin{prop} The projective space $\PP^n_{\FF_1}$ is proper. Consequently, every projective simplicial complex is proper and every projective morphism is proper. The product of two projective simplicial complexes is proper (though not projective in general). For any finite space $X$, its barycentric subdivision $\beta X$ is proper.
\end{prop}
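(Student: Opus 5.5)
The plan is to establish properness of $\PP^n_{\FF_1}$ with the criterion of Theorem \ref{thmproperspace} (or, since $\PP^n_{\FF_1}$ is locally simplicial and hence locally dualizable, with Theorem \ref{proper+dualizable}). The remaining assertions then follow from the closure properties proved earlier in this section.

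\emph{Step 1: $\PP^n_{\FF_1}$ is proper.} Identify $\PP^n_{\FF_1}$ with the nonempty subsets of $\{0,\dots,n\}$. For a point $p$ corresponding to a set $S$ with $|S|=k+1$, we have $C_p=\PP^k_{\FF_1}$. Hence $\partial C_p$ is the poset of nonempty proper subsets of $S$, the face poset of the boundary of a $k$-simplex. It is pure of dimension $k-1$.

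To see that $\partial C_p$ is a cohomological sphere, take its barycentric-subdivision geometric realization. This is the boundary of a simplex, i.e.\ $S^{k-1}$, and the cohomology of a finite space agrees with that of its order complex (McCord). For a self-contained argument, use Theorem \ref{proper+dualizable}(3) instead: choose a closed point $x_0=\{i\}\in\partial C_p$. Then $\partial C_p-\{x_0\}$ consists of the nonempty proper subsets of $S$ other than $\{i\}$. This poset deformation retracts, via the monotone maps $T\mapsto T\cup\{i\}$ and $T\mapsto S-\{i\}$ (when $k\ge 1$), onto the point $S-\{i\}$. Two monotone maps $g\le h$ are homotopic on finite spaces, so the poset is contractible and hence cohomologically trivial. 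Purity is clear. The degenerate cases $k=0$ and $k=1$ are checked directly: for $k=0$ there is no $p>x_0$, and for $k=1$ the space $\partial C_p-\{x_0\}$ is a single point. So Theorem \ref{proper+dualizable} gives properness. In fact the retraction argument verifies condition (2) of Theorem \ref{thmproperspace} directly for every closed point $x_0<p$, with no dualizability needed.

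\emph{Step 2: consequences.} A projective simplicial complex is a closed subset of some $\PP^n_{\FF_1}$, so it is proper by the Examples following Theorem \ref{thmproperspace} (closed subsets of proper spaces are proper). For a projective morphism $f$, properness is local on $Y$ by the Proposition listing equivalent local conditions. Locally, $f$ is a closed immersion $X\hookrightarrow Y\times\PP^n_{\FF_1}$ followed by the projection. The closed immersion is proper. The projection is the base change $1\times(\PP^n_{\FF_1}\to\{*\})$, which is proper by Proposition \ref{basechange0}. The composition is then proper by Proposition \ref{composition}. Products of projective simplicial complexes are proper by the second part of Proposition \ref{basechange0}. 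Finally, $\beta X$ is a projective simplicial complex (by the Example in the preliminaries), hence proper.

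The only genuine work is Step 1, namely the contractibility of $\partial C_p-\{x_0\}$. The expected obstacle is handling the edge cases and justifying that the comparison of monotone maps yields homotopy equivalence, hence equal cohomology. This is standard for finite spaces; alternatively, one can check directly that the space has a maximum once $x_0$ is removed, or apply Quillen's fiber lemma.
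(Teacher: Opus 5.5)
Your Step 2 is correct and spells out what the paper dismisses as ``immediate''. Your McCord route to condition (2) of Theorem \ref{proper+dualizable} is also fine, and it is essentially the paper's stated alternative. The paper's main argument is a one-liner via Corollary \ref{dualizante=canonico}: it uses the known fact that $D_{\PP^n_{\FF_1}}=\ZZ_{\{\un\}}[n]$ with $\ZZ_{\{\un\}}$ canonical.

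The gap is in your ``self-contained'' contraction, which you yourself flag as the only real work. As written, it does not work. Put $P:=\partial C_p-\{x_0\}$ with $x_0=\{i\}$, and assume $k\geq 2$ (you handle $k\le 1$ separately).
\begin{enumerate}
\item The map $T\mapsto T\cup\{i\}$ is not a self-map of $P$: it sends $S-\{i\}\in P$ to $S=p\notin P$.
\item Even where it is defined, it is not pointwise comparable with the constant map $S-\{i\}$. For instance, at $T=\{j\}$ with $j\neq i$ you get $\{i,j\}$ versus $S-\{i\}$, and these are incomparable. So no chain $g\le h$ connecting the identity to a constant map is produced.
\item Your fallback claim that $P$ has a maximum is also false. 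Every $S-\{j\}$ with $j\in S$ is maximal in $P$. For $S=\{0,1,2\}$ and $i=0$, $P$ is the zigzag $\{0,1\}>\{1\}<\{1,2\}>\{2\}<\{0,2\}$.
\end{enumerate}

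The fix is to retract the other way, with $r(T)=T-\{i\}$.
\begin{enumerate}
\item $r(T)$ lies in $P$: it is nonempty because $T\neq\{i\}$, it is proper because it misses $i$, and it is not $\{i\}$.
\item $r$ is monotone, and satisfies $r\le \mathrm{id}_P$ and $r\le c$, where $c$ is the constant map with value $S-\{i\}$.
\item Hence $\mathrm{id}_P\simeq r\simeq c$, so $P$ is contractible and $\RR\Gamma_{\text{\rm red}}(P,\ZZ)=0$.
\end{enumerate}
Equivalently, $r$ retracts $P$ onto $C_{S-\{i\}}$, the nonempty subsets of $S-\{i\}$, which has a maximum.

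With this correction your argument verifies condition (2) of Theorem \ref{thmproperspace} directly for every closed $x_0<p$. That is more elementary than the paper's route: it needs neither the computation of the dualizing complex of $\PP^n_{\FF_1}$ nor local dualizability. The paper's proof is shorter only because it relies on that prior computation.
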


\begin{proof} The properness of  $\PP^n_{\FF_1}$ follows directly from Corollary \ref{dualizante=canonico}, as  its dualizing complex is given by $\ZZ_{\{\un\}}[n]$ and $\ZZ_{\{\un\}}$ is a canonical complex. Alternatively, one can apply Theorem \ref{proper+dualizable}, noting that for every $p\in\PP^n_{\FF_1}$, the closure $C_p$ is a projective space.
The remaining consequences are immediate.
\end{proof}

Let us identify $\PP^n_{\FF_1}$ (resp. $\PP^m_{\FF_1}$) with the poset of non-empty subsets of a set $\Delta$ with $n+1$ elements (resp.,  a set $\Delta'$ with $m+1$ elements). A map $\phi\colon \Delta\to\Delta'$ induces a continuous map
\[\PP(\phi)\colon \PP^n_{\FF_1}\to \PP^m_{\FF_1}\] which maps a subset $S$ of $\Delta$ to the subset $\phi(S)$ of $\Delta'$. We say that $\PP(\phi)$ is the \emph{simplicial map} induced by $\phi$. If $\phi$ is injective, then $\PP(\phi)$ is a closed immersion. If $\phi$ is surjectivo, then $\PP(\phi)$ is also surjective. 
\begin{prop}\label{linear-proper} The simplicial map $\PP(\phi)$ is proper.
\end{prop}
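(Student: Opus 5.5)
The plan is to invoke Corollary \ref{c-proper=proper}: both $\PP^n_{\FF_1}$ and $\PP^m_{\FF_1}$ are proper (previous proposition) and dualizable (canonical complex $\ZZ_{\{\un\}}$). So it suffices to show that $\PP(\phi)$ is c-proper, that is, that for every $S\in\PP^n_{\FF_1}$ the restriction $C_S\to C_{\phi(S)}$ has cohomologically trivial fibres.

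First I fix a non-empty subset $S\subseteq\Delta$. Then $C_S$ is the poset of non-empty subsets of $S$, and $C_{\phi(S)}$ is the poset of non-empty subsets of $\phi(S)$. For a non-empty $T\subseteq\phi(S)$, the fibre is
\[F_T=\{R\subseteq S:\ R\neq\emptyset,\ \phi(R)=T\}.\]
This set is non-empty, since it contains $\phi^{-1}(T)\cap S$. That element is in fact the maximum of $F_T$: every $R\in F_T$ satisfies $R\subseteq \phi^{-1}(T)\cap S$. A finite space with a maximum $q$ equals $C_q$, so it is contractible to $q$, and hence cohomologically trivial, as already used in the proof of Lemma \ref{lemma}. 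Thus every fibre is cohomologically trivial, $\PP(\phi)$ is c-proper, and Corollary \ref{c-proper=proper} gives properness.

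The only step that needs care is checking the hypotheses of Corollary \ref{c-proper=proper}. Dualizability of $\PP^n_{\FF_1}$ is Example (1), and properness was just proved, so this is routine. I do not expect a real obstacle.

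As a fallback not relying on that corollary, one could factor $\phi$ as a surjection followed by an injection. The injective part gives a closed immersion, which is proper, and then Proposition \ref{composition} reduces to the surjective case. There one would verify Theorem \ref{properfibresld}, using the fibre computation above together with the dimension count $\dim[x_0,S]=|S|-1$, $\dim[\phi(x_0),\phi(S)]=|\phi(S)|-1$, $\dim G_S=|S|-|\phi(S)|$ for $f$-closed singletons $x_0$.
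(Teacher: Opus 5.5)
Your proof is correct and follows the paper's route: reduce to c-properness via Corollary \ref{c-proper=proper}, then show the fibres of $C_S\to C_{\phi(S)}$ are cohomologically trivial. Your observation that each fibre has the maximum $\phi^{-1}(T)\cap S$ is a more direct form of the paper's identification of the fibre with $\prod_{j\in T}\PP^{n_j}_{\FF_1}$, whose generic point is exactly that element. (In your fallback, which you don't need, the $f$-closed points are all $R$ with $\phi_{\vert R}$ injective, not only singletons; the dimension formula $|S|-|R|=(|\phi(S)|-|R|)+(|S|-|\phi(S)|)$ still holds for them.)
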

\begin{proof} By Corollary \ref{c-proper=proper} it suffices to show that $\PP(\phi)$ is c-proper. For every $p\in \PP^n_{\FF_1}$ (resp. $q\in \PP^m_{\FF_1}$), let  $\Delta_p$ (resp.  $\Delta_q\subseteq \Delta'$) denote the corresponding subset of $\Delta$. The restriction $\PP(\phi)_{\vert C_p}\colon C_p\to C_{\PP(\phi)(p)}$ coincides with $\PP(\phi_{\vert \Delta_p})$, for the map $\phi_{\vert \Delta_p}\colon \Delta_p\to \Delta_{\PP(\phi)(p)}=\phi(\Delta_p)$, which is a surjective simplical map. To conclude, it is enough to verify that for every surjective simplical map  $\PP(\phi)$, the fibres $\PP(\phi)^{-1}(q)$ are cohomologically trivial. Observe that
\[ \PP(\phi)^{-1}(q)=\left\{ p\in \PP^n_{\FF_1}: \,\aligned  p\cap \phi^{-1}(j)&=\emptyset,\text{ for } j\notin q
\\ p\cap \phi^{-1}(j)&\neq\emptyset,\text{ for } j\in q\endaligned\right\}.\]
This yields a natural homeomorphic identification  $\PP(\phi)^{-1}(q)\simeq \underset{j\in q}\prod \PP^{n_j}_{\FF_1}$, where $n_j=\# \phi^{-1}(j) -1$. Since products of projective spaces are cohomologically trivial, the assertion holds.
\end{proof}

\begin{defn}{\rm Let $K,L$ be two projective simplicial complexes. Thus, $K$ (resp. $L$) is a closed subset of $\PP^n_{\FF_1}$ (resp.  $\PP^m_{\FF_1}$), where $n+1$ is the number of vertices of $K$ (resp., $m+1$ the number of vertices of $L$). A {\em simplical map} $f\colon K\to L$ is a continuous map that extends to a simplicial map $\PP(\phi)\colon \PP^n_{\FF_1}\to \PP^m_{\FF_1}$, meaning that there exists a commutative diagram:
$$
\xymatrix{ \PP^n_{\FF_1} \ar[r]^{\PP(\phi)} & \PP^m_{\FF_1} \\ K \ar@{^(->}[u]  \ar[r]^{f} \ \   & \  \ L . \ar@{^(->}[u]   }  
$$ }
\end{defn} From Proposition \ref{linear-proper} we immediately obtain:
\begin{cor}\label{simplicialisproper} Every simplicial map $f\colon K\to L$   is proper.
\end{cor}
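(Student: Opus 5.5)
The plan is to reduce to Proposition \ref{linear-proper} via closed immersions. By definition, a simplicial map $f\colon K\to L$ fits into a commutative square with a simplicial map $\PP(\phi)\colon\PP^n_{\FF_1}\to\PP^m_{\FF_1}$ and closed immersions $i\colon K\hookrightarrow\PP^n_{\FF_1}$ and $i'\colon L\hookrightarrow\PP^m_{\FF_1}$. Thus $i'\circ f=\PP(\phi)\circ i$.

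\textbf{Step 1: the composite $\PP(\phi)\circ i$ is proper.} A closed immersion is proper by Example (1) following Theorem \ref{thmproperspace}. The map $\PP(\phi)$ is proper by Proposition \ref{linear-proper}. Their composition is then proper by Proposition \ref{composition}. Hence $i'\circ f$ is proper.

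\textbf{Step 2: descend properness to $f$.} Since $i'$ is a closed immersion, Example (3) following Theorem \ref{thmproperspace} says that $f$ is proper if and only if $i'\circ f$ is proper. Together with Step 1, this gives the claim.

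I expect no real obstacle here. The only point to check is that Example (3) applies exactly as stated: it needs a continuous map $f$ into $L$ and a closed immersion of $L$ into $\PP^m_{\FF_1}$, and both are given.
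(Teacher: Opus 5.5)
Your proposal is correct and matches the paper, which states that the corollary follows immediately from Proposition \ref{linear-proper}. The implicit argument there is exactly what you wrote: $\PP(\phi)\circ i=i'\circ f$ is proper because closed immersions are proper and compositions of proper maps are proper (Proposition \ref{composition}), and Example (3) then removes the closed immersion $i'$.
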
 

As an example of this result, for every continuous map $f\colon X\to Y$, the induced map between the barycentric subdivisions  $\beta(f)\colon \beta X\to\beta Y$ is a simplicial map, hence $\beta(f)$ is a proper map.

The converse of Corollary \ref{simplicialisproper} also holds, yielding the following characterization:

\begin{thm}\label{simplicial=proper} Let $f\colon K\to L$ be a continous map between projective simplicial complexes. The following conditions are equivalent:
\begin{enumerate}
\item $f$ is $c$-proper.
\item $f$ is proper.
\item $f$ is simplicial.
\end{enumerate} 
\end{thm}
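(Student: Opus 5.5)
The plan is to prove the cycle of implications (3)$\Rightarrow$(2)$\Rightarrow$(1)$\Rightarrow$(3). The first two are already available. The implication (3)$\Rightarrow$(2) is Corollary \ref{simplicialisproper}. The implication (2)$\Rightarrow$(1) holds because a proper map is c-proper by Definition \ref{defn-proper}. So all the work is in (1)$\Rightarrow$(3), which is purely combinatorial.

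For (1)$\Rightarrow$(3), regard $K\subseteq\PP^n_{\FF_1}$ and $L\subseteq\PP^m_{\FF_1}$ as families of non-empty subsets of the vertex sets $\Delta$ and $\Delta'$. The first step is to show that $f$ sends vertices to vertices. Let $v\in\Delta$, so $v$ is a closed point of $K$ and $C_v=\{v\}$. The map $f_{\vert C_v}\colon \{v\}\to C_{f(v)}$ has cohomologically trivial, hence non-empty, fibres. It is therefore surjective, so $C_{f(v)}=\{f(v)\}$ and $f(v)$ is a closed point of $L$, i.e.\ a vertex. This defines a map $\phi\colon\Delta\to\Delta'$ by $\phi(v)=f(v)$.

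The second step, which I expect to be the main point, is to show that $f(S)=\phi(S)$ for every simplex $S\in K$, viewing $f(S)$ as a subset of $\Delta'$.
\begin{enumerate}
\item $\phi(S)\subseteq f(S)$: for each $v\in S$ we have $v\le S$, so by monotonicity $\phi(v)=f(v)\le f(S)$, i.e.\ $\phi(v)\in f(S)$.
\item $f(S)\subseteq\phi(S)$: use c-properness again. The fibres of $f_{\vert C_S}\colon C_S\to C_{f(S)}$ are non-empty, so this restriction is surjective. Given $w\in f(S)$, the vertex $w$ is a point of $C_{f(S)}$, so there is $p\le S$ with $f(p)=w$. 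Choose a vertex $v\le p$. Then $v\in S$, and $f(v)\le f(p)=w$. Since $f(v)$ is a vertex and $w$ is a vertex, this forces $f(v)=w$, so $w=\phi(v)\in\phi(S)$.
\end{enumerate}

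Finally, $\PP(\phi)\colon\PP^n_{\FF_1}\to\PP^m_{\FF_1}$ sends $S$ to $\phi(S)=f(S)$, so it restricts to $f$ on $K$. The required square with the inclusions $K\hookrightarrow\PP^n_{\FF_1}$ and $L\hookrightarrow\PP^m_{\FF_1}$ therefore commutes, and $f$ is simplicial. The only ingredients used are the non-emptiness of the fibres of the maps $f_{\vert C_p}$ and monotonicity of $f$; no cohomological input beyond that is needed.
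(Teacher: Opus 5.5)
Your proof is correct, but for the one substantive implication it takes a different route from the paper.

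\textbf{The paper's route.} It first reduces, via the restrictions $f_{\vert C_p}$, to a surjective map $\PP^n_{\FF_1}\to\PP^m_{\FF_1}$, and gets (1)$\Leftrightarrow$(2) from Corollary~\ref{c-proper=proper}. It then proves (2)$\Rightarrow$(3) by induction on $n$. By induction, $f$ is simplicial on the union of the standard hyperplanes, which gives a $\PP(\phi)$ agreeing with $f$ away from $\un$. The dimension formula of Corollary~\ref{properonvarieties} then forces $\dim f^{-1}(\un)=n-m>0$. This produces a non-generic point mapping to $\un$, and hence $\PP(\phi)(\un)=\un$.

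\textbf{Your route.} You close the cycle with a direct combinatorial proof of (1)$\Rightarrow$(3). It uses only the surjectivity of $f_{\vert C_p}\colon C_p\to C_{f(p)}$, that is, only that $f$ is a closed map. Both of your inclusions $\phi(S)\subseteq f(S)$ and $f(S)\subseteq\phi(S)$ are sound. In the second one you implicitly use that $L$ is a lower set of $\PP^m_{\FF_1}$, so that $\{w\}\in C_{f(S)}$.

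\textbf{What each buys.} Your argument needs no reduction to $\PP^n_{\FF_1}\to\PP^m_{\FF_1}$ and no induction. It also avoids the dimension-formula machinery behind Theorems~\ref{thmproperonfibres} and~\ref{properfibresld}. It proves slightly more: for continuous maps between projective simplicial complexes, being closed is already equivalent to being c-proper, proper, or simplicial. This is the analogue of what the paper states for complete $\FF_1$-curves. The cohomological content is the same in both proofs and sits entirely in (3)$\Rightarrow$(2), that is, in Proposition~\ref{linear-proper} together with Corollary~\ref{c-proper=proper}. The paper's route is therefore mainly illustrative: it shows the dimension formula for proper maps doing real work. Yours is the more economical argument.
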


\begin{proof} A map $f$ satisfies any of these conditions if and only if its restriction $f_{\vert C_p}\colon C_p\to C_{f(p)}$ does for every $p\in K$. Thus, we are reduced to proving the theorem for a surjective continuous map $f\colon \PP^n_{\FF_1}\to \PP^m_{\FF_1}$. In this setting (1) and (2) are equivalent by Corollary \ref{c-proper=proper}, and we already know that (3) implies (2). 

To conclude, we show that if $f$ is proper, then $f$ must be simplicial. We proceed by induction on $n$, the case $n=0$ being trivial. We may assume that $m<n$, since $f$ is an isomorphism when $m=n$. Let $H_0,\dots,H_n$ be the standard hyperplanes of $\PP^n_{\FF_1}$. By the induction hypothesis, the restriction of $f$ to each $H_i$ is simplicial; thus,  the restriction of $f$ to $H_0\cup\dots\cup H_n=\PP^n_{\FF_1}-\{\un\}$ is simplicial. This provides a simplicial map $\PP(\phi)\colon \PP^n_{\FF_1}\to \PP^m_{\FF_1}$ that coincides with $f$ away from the generic point $\mathbf{1}$, and it remains to verify that $\PP(\phi)(\un)=\un$. Since $f$ is proper, Corollary \ref{properonvarieties} ensures that  $\dim \PP^n_{\FF_1}=\dim \PP^n_{\FF_1}+\dim G_\un$, where  $G_\un=f^{-1}(\un)$. This forces  $\dim f^{-1}(\un)=n-m>0$; hence there exists a non-generic point $x\in \PP^n_{\FF_1}$,   such that $f(x)=\un$. It follows that $\PP(\phi)(x)=f(x)=\un$, which implies $\PP(\phi)(\un)=\un$, completing the induction step.
 \end{proof}
 
 \begin{rem}\label{simplicial=proper2} {\rm This theorem remains valid for simplicial complexes instead of projective simplicial complexes. A map $\phi\colon \Delta\to\Delta' $ between sets with $n$ and $m$ elements induces a continuous map $\A(\phi)\colon \A^n_{\FF_1}\to \A^m_{\FF_1} $, which is called the simplicial map associated with $\phi$. A simplicial map $f\colon K\to L$ between simplicial complexes is a map that extends to a simplicial map $\A(\phi)$. The same arguments as above show that a map $f\colon K\to L$ is simplicial if and only if it is c-proper, which in turn is equivalent to being proper. }
 \end{rem}

\begin{prop} The $n$-dimensional sphere  ${\mathbb S}^n$ is proper. Hence every closed subset of a product of spheres is proper. Furthermore, a continuous map $f\colon {\mathbb S}^n\to {\mathbb S}^m$ is proper if and only if it is c-proper.
\end{prop}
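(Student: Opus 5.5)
We prove the three assertions in turn, each reduced to results already established.

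\emph{Properness of $\Sa^n$.} The sphere $\Sa^n$ is dualizable with canonical complex $\ZZ$ (Examples above), hence locally dualizable. We apply Theorem \ref{proper+dualizable}, criterion (2). For every $p\in\Sa^n$ we recorded $\partial C_p=\Sa^{\dim\partial C_p}$, with the convention $\Sa^{-1}=\emptyset$. So it suffices to check that every sphere $\Sa^k$ ($k\ge -1$) is pure and a cohomological sphere.

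Purity is immediate from the order. Every maximal chain must pick exactly one of $p_i,q_i$ at each level $i=0,\dots,k$, since $p_i,q_i$ are incomparable and both lie below everything of higher index. Hence every maximal chain has length $k$. The empty case $k=-1$ is a cohomological sphere by the convention $H^{-1}_{\rm red}(\emptyset)=\ZZ$.

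For $k\ge 0$ we argue by induction using the cover $\Sa^k=U\cup V$, with $U=\Sa^k-\{q_k\}$ and $V=\Sa^k-\{p_k\}$. These are open, since $p_k,q_k$ are closed points of the complement $C$-structure only when $k=0$; in general $U=U_{p_k}\cup\Sa^{k-1}$ suitably, so we argue directly instead. The subsets $C_{p_k}$ and $C_{q_k}$ are closed and contractible, having generic points, and $C_{p_k}\cap C_{q_k}=\Sa^{k-1}$. The Mayer--Vietoris sequence for this closed cover gives $\RR\Gamma_{\rm red}(\Sa^k,\ZZ)=\RR\Gamma_{\rm red}(\Sa^{k-1},\ZZ)[-1]$. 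Induction then yields $\ZZ[-k]$.

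This Mayer--Vietoris step for closed covers is the only point needing care. It follows from the exact sequence
\[
0\to\ZZ\to\ZZ_{C_{p_k}}\oplus\ZZ_{C_{q_k}}\to\ZZ_{\Sa^{k-1}}\to 0.
\]
By Theorem \ref{proper+dualizable}, $\Sa^n$ is proper.

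\emph{Closed subsets of products of spheres.} Products of proper spaces are proper by Proposition \ref{basechange0}. Closed subsets of proper spaces are proper by the Examples following Theorem \ref{thmproperspace}.

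\emph{Maps between spheres.} A proper map is c-proper by definition. Conversely, $\Sa^n$ and $\Sa^m$ are proper and dualizable, so Corollary \ref{c-proper=proper} shows that every c-proper map $\Sa^n\to\Sa^m$ is proper.
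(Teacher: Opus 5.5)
Your proof is correct and follows the paper's route: Theorem \ref{proper+dualizable} applied via $\partial C_p=\Sa^{\dim\partial C_p}$, then products and closed subsets for the middle claim, and Corollary \ref{c-proper=proper} for maps; your Mayer--Vietoris check that $\Sa^k$ is pure and a cohomological sphere makes explicit what the paper takes for granted. One cleanup: delete the aside claiming $\Sa^k-\{q_k\}$ and $\Sa^k-\{p_k\}$ are open. For $k\ge 1$ they are exactly the closed sets $C_{p_k}$ and $C_{q_k}$ that your argument then uses.
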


\begin{proof} For every $p\in {\mathbb S}^n$, one has $\partial C_p= {\mathbb S}^{\dim \partial C_p}$. By Theorem \ref{proper+dualizable}, ${\mathbb S}^n$ is proper. If $f$ is c-proper,   its properness follows directly from Corollary \ref{c-proper=proper}.

\end{proof}

\begin{prop} The affine space $\AAA^n_{\FF_1}$ is nor proper  for $n>0$. More generally, every local  space of dimension $n>0$ is not proper.
\end{prop}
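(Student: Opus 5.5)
It suffices to prove the general claim: a local space $X$ with closed point $\0$ and $\dim X=n>0$ is not proper. The affine case then follows, since $\A^n_{\FF_1}$ is local with closed point $\0$.

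We use Theorem \ref{thmproperspace}, criterion (2). Since $\0$ is the unique closed point, properness of $X$ would force $\partial C_p-\{\0\}$ to be cohomologically trivial for every $p>\0$. Such a $p$ exists because $\dim X>0$. Choose $p$ with $\dim[\0,p]=1$, i.e.\ $p$ covers $\0$; any $p$ that is minimal in $X^*$ will do.

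For this $p$, the closed set $C_p=\{x\le p\}$ equals $\{\0,p\}$, because $X$ is local and every $x\le p$ satisfies $\0\le x\le p$ with nothing strictly between $\0$ and $p$. Hence $\partial C_p=\{\0\}$, so $\partial C_p-\{\0\}=\emptyset$. The empty space is not cohomologically trivial: $\RR\Gamma_{\text{\rm red}}(\emptyset,\ZZ)=\ZZ[1]\neq 0$, as noted after Definition \ref{coh-sphere}, and equivalently $H^0(\emptyset,\ZZ)=0\neq\ZZ$. This contradicts criterion (2), so $X$ is not proper.

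As a cross-check, one can also use Corollary \ref{properspace} directly. There we need $\RR\Gamma_\0(X,\ZZ_{\{p\}})\to\RR\Gamma(X,\ZZ_{\{p\}})$ to be an isomorphism. By Lemma \ref{lemma}, the right-hand side is $\RR\Gamma_{\text{\rm red}}(\{\0\},\ZZ)[-1]=0$, while the left-hand side is $\RR\Gamma_\0(\{\0\},\ZZ)[-1]=\ZZ[-1]\neq 0$. So the map is not an isomorphism.

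The only step needing care is the existence of a point $p$ covering $\0$. This holds because $X$ is finite and $X^*\neq\emptyset$, so $X^*$ has a minimal element, and any minimal element of $X^*$ covers $\0$. No step here poses a real obstacle.
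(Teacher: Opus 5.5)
Your proof is correct and follows the paper's argument: you choose $p$ covering $\0$ (equivalently, $\dim C_p=1$), observe that $\partial C_p-\{\0\}=\emptyset$ is not cohomologically trivial, and invoke criterion (2) of Theorem \ref{thmproperspace}. Your explicit justification that such a $p$ exists, and your cross-check via Corollary \ref{properspace} and Lemma \ref{lemma} (where $\ZZ[-1]\to 0$ fails to be an isomorphism), are sound additions the paper leaves implicit.
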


\begin{proof} If $p\in X$ is a non-closed point with $\dim C_p=1$, then $\partial C_p=\{\0\}$,  which implies  $\partial C_p-\{\0\}=\emptyset$. Since the empty set is not cohomologically trivial,  Theorem \ref{thmproperspace}   ensures that  $X$ is not proper.
\end{proof}

\medskip
\noindent{\S\it \ \ Complete curves over $\FF_1$.}\medskip

\begin{defn}{\rm  An {\em $\FF_1$-curve} $\C$ is   a  $1$-dimensional finite space. If $\C$ is proper, we say that $\C$ is a {\em complete} $\FF_1$-curve.}
\end{defn}

\begin{prop} Every $\FF_1$-curve is locally simplicial. In particular, it is locally dualizable.
\end{prop}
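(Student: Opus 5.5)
The plan is to show that every $U_p$ is a simplicial complex, i.e.\ isomorphic to a closed subset of some $\A^n_{\FF_1}$. Local dualizability then follows from the Examples earlier in the paper: simplicial complexes are dualizable, hence locally simplicial spaces are locally dualizable.

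Let $\C$ be an $\FF_1$-curve and fix $p\in\C$. Since $\dim\C=1$, every chain has length at most one, and we split into two cases.

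\emph{Case $p$ is not closed.} Some $x<p$ exists. If some $q>p$ also existed, then $x<p<q$ would be a chain of length $2$, which is impossible. Hence $U_p=\{p\}=\A^0_{\FF_1}$.

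\emph{Case $p$ is closed.} Every $q\in U_p^*$ is maximal, since a chain $p<q<q'$ would again have length $2$. So $U_p^*=\{q_1,\dots,q_n\}$ is a discrete set of pairwise incomparable points, and $U_p$ is the poset with minimum $p$ lying below $n$ incomparable elements. This is isomorphic to the closed subset
\[\{\emptyset,\{1\},\dots,\{n\}\}\subseteq\A^n_{\FF_1},\]
sending $p\mapsto\emptyset$ and $q_i\mapsto\{i\}$. This subset is a lower set, because the only subsets contained in $\{i\}$ are $\emptyset$ and $\{i\}$ itself, so it is closed. Therefore $U_p$ is a simplicial complex.

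In both cases $U_p$ is a simplicial complex, so $\C$ is locally simplicial. The ``in particular'' clause is then exactly Example (1) following Definition~\ref{coh-sphere}, which states that every locally simplicial space is locally dualizable. The paper also notes earlier that any $1$-dimensional finite space is locally simplicial, and the argument above verifies that remark directly. There is no real obstacle here; the only point needing care is the closedness (lower-set property) of the image in $\A^n_{\FF_1}$, which was checked above.
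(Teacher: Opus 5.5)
Your proof is correct and follows the paper's route. The paper also splits into a trivial case $U_p=\{p\}$ (it splits on generic versus non-generic rather than non-closed versus closed, which makes no difference), and otherwise identifies $U_p$ with the union of the coordinate axes of $\A^r_{\FF_1}$; that union is exactly your lower set $\{\emptyset,\{1\},\dots,\{n\}\}$, with local dualizability then read off from the earlier Examples.
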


\begin{proof} Let $p\in \C$ and let us check that $U_p$ is simplicial. 
If $p$ is a generic point, then $U_p=\{p\}$, and the result is immediate. If $p$ is not generic and $r$ denotes  the number of generic points of $U_p$, then $U_p$ is isomorphic to the union of the coordinate axes   of $\A^r_{\FF_1}$, which constitutes  a closed subset of $\A^r_{\FF_1}$. 
\end{proof}

Note that every $\FF_1$-curve is Cohen--Macaulay.

\begin{prop}\label{completecurves} Let $\C$ be an irreducible $\FF_1$-curve. Then $\C$ is complete if and only if $\C=\PP^1_{\FF_1}$. That is, $\PP_{\FF_1}^1$ is the unique complete and irreducible $\FF_1$-curve. Consequently, a connected $\FF_1$-curve $\C$ is complete if and only if every irreducible component of $\C$ is isomorphic to $\PP^1_{\FF_1}$. Moreover, a continuous map $f\colon \C\to \C'$ between complete  $\FF_1$-curves is proper if and only if it is closed.
\end{prop}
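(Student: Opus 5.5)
The plan is to use that every $\FF_1$-curve is locally dualizable, so Theorem \ref{proper+dualizable} applies. For an irreducible curve $\C$ with generic point $g$, the points $p\neq g$ are closed, so $\partial C_p=\emptyset$. The empty set is pure and a cohomological sphere of dimension $-1$, since $H^{-1}_{\rm red}(\emptyset,\ZZ)=\ZZ$. Hence the only nontrivial condition is at $p=g$. There $\partial C_g=\partial\C$ is a finite discrete set of closed points, so it is automatically pure of dimension $0$. It is a cohomological sphere exactly when $H^0(\partial\C,\ZZ)=\ZZ^2$, i.e. when $\partial \C$ has exactly two points. An irreducible curve whose boundary has two points is $\{a,b,g\}$ with $a,b<g$, which is $\PP^1_{\FF_1}$. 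This gives the first claim; the converse, that $\PP^1_{\FF_1}$ is proper, is already known.

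For the connected case, I will use the corollary stating that $X$ is proper if and only if every irreducible component is proper. Each component $C_g$ is an irreducible $\FF_1$-curve, or a point if $g$ is an isolated closed point. So $\C$ is complete if and only if each one-dimensional component is $\PP^1_{\FF_1}$. The zero-dimensional components are proper trivially. In a connected curve of dimension $1$ they cannot occur, except when $\C$ is itself a point.

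For maps $f\colon\C\to\C'$ between complete curves, both spaces are proper and dualizable. Properness gives dualizability via Corollary \ref{dualizante=canonico}, since $\C$ is locally dualizable. By Corollary \ref{c-proper=proper}, $f$ is proper if and only if it is c-proper. Since c-proper maps are closed, it remains to show that a closed $f$ is c-proper. By Proposition \ref{cohprop-prop}(8) it suffices to check $f_{\vert C_g}\colon C_g\to C_{f(g)}$ for generic points $g$, where $C_g=\PP^1_{\FF_1}=\{a,b,g\}$.

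\emph{First case: $f(g)$ is closed.} Then the whole $C_g$ maps to the single point $f(g)$. The fibre is $C_g$, which is contractible and therefore cohomologically trivial.

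\emph{Second case: $f(g)=g'$ is generic.} Closedness of $f$ gives $f(C_g)=C_{g'}=\{a',b',g'\}$, so $f$ maps $\{a,b\}$ onto $\{a',b'\}$. Since $a,b<g$ are the only points below $g$ and $f(g)=g'$, we get $f^{-1}(g')\cap C_g=\{g\}$. Hence the fibres of $f_{\vert C_g}$ are single points, which are cohomologically trivial.

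The main obstacle, mild as it is, lies in this last step. One has to see that closedness of $f$ forces $f(C_g)=C_{f(g)}$, and hence surjectivity onto both closed points. Here I use that the image of the closed set $C_g$ is closed and contains $g'$, so it contains $C_{g'}$. Surjectivity matters because the empty fibre is not cohomologically trivial. One also has to handle the degenerate case where $g$ maps to a closed point.
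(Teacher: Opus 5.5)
Your argument is correct. The classification of irreducible complete curves and the connected case follow the paper: Theorem \ref{proper+dualizable} forces $\partial\C$ to be a $0$-dimensional cohomological sphere, i.e.\ two points.

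For the statement about maps, your case analysis is also the paper's: $f(g)$ closed versus $f(g)$ generic, with closedness of $f$ giving $f(C_g)=C_{f(g)}$. The reduction differs, however. The paper checks directly that each $f_{\vert C_g}\colon C_g\to C_{f(g)}$ is \emph{proper}. It uses the unnumbered proposition, left to the reader, that properness can be tested on the restrictions to the $C_g$. It then notes that $\PP^1_{\FF_1}\to\{*\}$ is proper and that an isomorphism is proper. You instead verify only \emph{c-properness} via Proposition \ref{cohprop-prop}(8), and upgrade it with Corollary \ref{c-proper=proper}. This costs you the extra observation that complete curves are dualizable, which you correctly get from Corollary \ref{dualizante=canonico}.

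What each route buys: yours rests only on results that are proved or cited, and it makes explicit the direction proper $\Rightarrow$ c-proper $\Rightarrow$ closed, which the paper leaves implicit. The paper's route avoids any appeal to dualizability.

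You skip two trivial cases:
\begin{enumerate}
\item $\C$ need not be connected, so a generic point $g$ may be isolated, hence closed, with $C_g=\{g\}$.
\item C-properness of $f_{\vert C_g}$ also requires checking $C_a\to C_{f(a)}$ for the closed points $a<g$.
\end{enumerate}
Both are settled by noting that a closed map sends closed points to closed points, so these restrictions are maps between single points.
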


\begin{proof} Let $g$ be the unique generic point of $\C$. We must show that $\C$ has exactly two closed points. This follows immediately since  $\C-\{g\}=\partial \C$ is a 0-dimensional cohomological sphere.

Now, assume that $f\colon \C\to \C'$ is closed, and let us prove that the (surjective) map $f_{\vert C_g}\colon C_g\to C_{f(g)}$ is proper for every generic point $g\in\C$.  If $g$ is a closed point, then $C_g=\{g\}$  $f_{\vert C_g}$ is trivially proper. If $g$ is not closed, then $C_g=\PP^1_{\FF_1}$. If $f(g)$ is a closed point, then $C_{f(g)}$ is a single point, so $f_{\vert C_g}$ is proper because $\PP^1_{\FF_1}$ is proper. If $f(g)$ is not closed, then $ C_{f(g)}=\PP^1_{\FF_1}$ and $f_{\vert C_g}$ is an isomorphism.
\end{proof}

\begin{rem}\label{completecurves2}{\rm (1) Let $\C$ be a complete $\FF_1$-curve, let $E$ be the set of non-closed generic points of $\C$, and  let $V$ be the set of closed points. This data naturally defines an undirected multigraph without loops $(V,E)$, where each edge $g \in E$ has as its endpoints the vertices in $\partial C_g$.  Conversely, every undirected multigraph without loops uniquely determines a complete $\mathbb{F}_1$-curve.

(2) The circle ${\mathbb S}^1$ constitutes the most basic example of a complete $\FF_1$-curve that is not projective.}
\end{rem}

\end{document}